\newlength\cellsize \setlength\cellsize{10\unitlength}
\newcommand\cellify[1]{\def\thearg{#1}\def\nothing{}%
\ifx\thearg\nothing\vrule width0pt height\cellsize depth0pt%
  \else\hbox to 0pt{\usebox2\hss}\fi%
  \vbox to 10\unitlength{\vss\hbox to 10\unitlength{\hss$#1$\hss}\vss}}
\newlength{\bibitemsep}\setlength{\bibitemsep}{.2\baselineskip plus .05\baselineskip minus .05\baselineskip}
\newlength{\bibparskip}\setlength{\bibparskip}{0pt}
\let\oldthebibliography\thebibliography
\renewcommand\thebibliography[1]{%
  \oldthebibliography{#1}%
  \setlength{\parskip}{\bibitemsep}%
  \setlength{\itemsep}{\bibparskip}%
}
\newcommand\tableau[1]{\vtop{\let\\=\cr
\setlength\baselineskip{-10000pt}
\setlength\lineskiplimit{10000pt}
\setlength\lineskip{0pt}
\halign{&\cellify{##}\cr#1\crcr}}}
\newcommand{\cirfy}[1]{\def\thearg{#1}\def\nothing{}%
\ifx\thearg\nothing\vrule width0pt height\cellsize depth0pt%
  \else\hbox to 0pt{\usebox7\hss}\fi%
  \vbox to 10\unitlength{\vss\hbox to 10\unitlength{\hss$#1$\hss}\vss}}
\newcommand\cirtab[1]{\vtop{\let\\=\cr
\setlength\baselineskip{-10000pt}
\setlength\lineskiplimit{10000pt}
\setlength\lineskip{0pt}
\halign{&\cirfy{##}\cr#1\crcr}}}
\theoremstyle{plain}
\newtheorem*{prop}{Proposition}
\newtheorem{thm}{Theorem}
\newtheorem*{lem}{Lemma}
\newtheorem*{cor}{Corollary}
\theoremstyle{definition}
\newtheorem*{example}{Example}
\newtheorem*{defn}{Definition}
\newtheorem*{rem}{Remark}
\theoremstyle{remark}
\newcommand{\lie}[1]{\mathfrak{#1}}
\newcommand\bc{\mathbb C}
\newcommand\bn{\mathbb N}
\newcommand\bz{\mathbb Z}
\newcommand{\GL}{\operatorname{GL}}
\newcounter{cnt}
\def\mydggeometry{\makeatletter\dg@YGRID=1\dg@XGRID=20\unitlength=0.003pt\makeatother}
\makeatother \theoremstyle{remark}
\numberwithin{equation}{section}
\def\section{\def\@secnumfont{\mdseries}\@startsection{section}{1}%
  \z@{.7\linespacing\@plus\linespacing}{.5\linespacing}%
  {\normalfont\scshape\centering}}
\def\subsection{\def\@secnumfont{\bfseries}\@startsection{subsection}{2}%
  {\parindent}{.5\linespacing\@plus.7\linespacing}{-.5em}%
  {\normalfont\bfseries}}
\def\subsubsection{\def\@subsecnumfont{\bfseries}\@startsection{subsubsection}{3}%
  {\parindent}{.5\linespacing\@plus.7\linespacing}{-.5em}%
  {\normalfont\bfseries}}
\title{Root generated subalgebras of symmetrizable Kac-Moody algebras}
\author{Irfan Habib}
\address{Indian Institute of Science, Bangalore 560012}
\email{irfanhabib@iisc.ac.in}
\author{Deniz Kus}
\address{University of Bochum, Faculty of Mathematics, Universit{\"a}tsstr. 150, Bochum 44801, 
Germany}
\email{deniz.kus@rub.de}
\thanks{This research was supported through the program \say{Oberwolfach Research Fellows} by the Mathematisches Forschungsinstitut Oberwolfach in 2023. The second and third author thank MFO for this opportunity and the superb working conditions.}
\author{R. Venkatesh}
\address{Indian Institute of Science, Bangalore 560012}
\email{rvenkat@iisc.ac.in}
\thanks{D.K. was partially funded by the Deutsche Forschungsgemeinschaft (DFG, German Research Foundation)– grant 446246717.}
\thanks{I.H. was partially supported by DST/INSPIRE/03/2019/000172.}
\begin{document}
\begin{abstract}
The derived algebra of a symmetrizible Kac-Moody algebra $\lie g$ is generated (as a Lie algebra) by its root spaces corresponding to real roots. In this paper, we address the natural reverse question: given any subset of real root vectors, is the Lie subalgebra of $\lie g$ generated by these again the derived algebra of a Kac-Moody algebra? We call such Lie subalgebras root generated, give an affirmative answer to the above question and show that there is a one-to-one correspondence between them, real closed subroot systems and $\pi$-systems contained in the positive system of $\lie g$. Finally, we apply these identifications to all untwised affine types in order to classify symmetric regular subalgebras first introduced by Dynkin in the finite-dimensional setting. We show that any root generated subalgebra associated to a maximal real closed subroot system can be embedded into a unique maximal symmetric regular subalgebra.
\end{abstract}
\maketitle
\section{Introduction}

In one of his influential papers, Dynkin classified semi-simple subalgebras of finite dimensional simple Lie algebras (see \cite{dynkin1952semisimple}). An important key ingredient to achieve this goal was the study and classification of so-called symmetric regular subalgebras; these are by definition invariant under the action of a fixed Cartan subalgebra and the corresponding set of roots is symmetric. Furthermore, for a given finite-dimensional semi-simple Lie algebra $\mathring{\lie g}$ with set of roots $\mathring{\Delta}$, Dynkin obtained bijective correspondences between the following sets: 
\begin{enumerate}
 \item  the set of symmetric regular subalgebras of $\mathring{\lie g}$,
     \item the set of (linearly independent) $ \pi-\text{systems of }\mathring{\Delta}\
             \text{contained in  } \mathring{\Delta}^+$,
     \item the set of $ \text{closed subroot systems of } \mathring{\Delta}$.
 \end{enumerate}
 Using this correspondence, Dynkin reduced the problem of classifying symmetric regular subalgebras to the combinatorial problem of classifying closed subroot systems.  Moreover, any symmetric regular subalgebra corresponds to a Cartan-invariant semi-simple subalgebra of $\mathring{\lie g}$.\par The analogue picture for an arbitrary Kac-Moody algebra $\lie g$ fails and first counterexamples can be constructed even for low rank untwisted affine Lie algebras. Although the structure of symmetric regular subalgebras is not completely determined by the combinatorics of their root systems, the first step of understanding them is to investigate the combinatorial picture. This was achieved in a series of papers by various authors, also due to its further applications to the theory of abelian/ad-nilpotent ideals of Borel subalgebras \cite{RS04} or reflection subgroups of finite and affine Weyl groups \cite{DL11a}, to name a few. In \cite{roy2019maximal} the authors classified (maximal) real closed subroot systems for affine Lie algebras which was later generalized in \cite{KV21a} to affine reflection systems by developing a similar theory as Borel and de Siebenthal in \cite{BdS}. \par
 The motivation of this paper is twofold. On the one hand we aim to attack the algebraic picture and classify symmetric regular subalgebras (for first attempts in the affine case see \cite{feingold2004subalgebras}) and on the other hand we want to establish analogue bijective correspondences in the general Kac-Moody setting. In contrast to the finite case, symmetric regular subalgebras of $\lie g$ do not necessarily correspond to Kac-Moody subalgebras which makes a classification quite challenging. Pursuing both goals has led to the study of \textit{root generated} subalgebras of Kac-Moody algebras (these are subalgebras generated by real root vectors) and the natural question whether a root generated subalgebra is again of Kac-Moody type (see Theorem~\ref{KMtypereg} for an affirmative answer). They form an important subclass of symmetric regular subalgebras and we show in Theorem \ref{mainresbij} that they give rise to the analogue bijective correspondences between the following sets:
 \begin{enumerate}
 \item  the set of root generated subalgebras of $\lie g$
     \item the set of $ \pi-\text{systems of }\Delta\
             \text{contained in  }\Delta^+$
     \item the set of $ \text{real closed subroot systems of } \Delta$
 \end{enumerate}
where $\Delta$ is the root system of $\lie g$ with respect to a fixed Cartan subalgebra. One of the crucial facts used to prove these bijective correspondences is Proposition~\ref{corgen34} where we show that the real roots of the root generated subalgebra $\lie g(\Psi)=\langle \lie g_{\alpha}, \lie g_{-\alpha} : \alpha \in \Psi \rangle$ coincides with $\Psi$ for any real closed subroot system $\Psi$; a priori $\lie g(\Psi)$ could have more real roots. This generalizes the affine Kac-Moody case \cite[Corollary 11.1.5]{roy2019maximal} and the rank two Kac-Moody case when $\Psi$ is generated by a subset of simple roots (see \cite[Proposition 4.1]{carbone2021commutator} for a precise statement).\par 

For the untwisted affine Lie algebras we go a step further and give explicit descriptions of all symmetric regular subalgebras and describe the subclass of maximal ones (see Theorem~\ref{mainthmaffine1} and Theorem~\ref{maintheoremaffinemaximalones}). 

To be more precise, we prove that the maximal symmetric regular algebras are all of the form
$$\lie g(\Psi)\oplus \mathbb{C} d,\ \ \ \left(\text{resp.} \ \ \lie g(\Psi')\oplus \bigoplus\limits_{r\in \mathbb{Z}\backslash \{0\}} \lie g_{r\delta}\oplus \mathbb{C}d\right)$$
for some maximal real closed subroot system $\Psi$ (resp. $\Psi'$) with full gradient (resp. proper gradient).
As a consequence, we see that any root generated subalgebra $\lie g(\Psi)$ for a maximal real closed subroot system $\Psi$ can be embedded into a unique maximal symmetric regular subalgebra. This observation suggests a one-to-one correspondence between them in the general Kac-Moody setting. 

At the end, we make some remarks on the non-symmetric case and prove a decomposition of regular subalgebras of $\lie g$ into a semi--direct product of its symmetric and special part respectively under some natural hypothesis (see Propistion~\ref{dynanalogue}); this generalizes the decomposition that is available in the finite case. A similar type of decomposition can be found in \cite{Caprace}.

\medskip
The paper is organized as follows: we recall the definitions, set up notations and prove basic results in Section~\ref{section2}. Root generated subalgebras and $\pi-$systems are introduced in Section~\ref{section3} and the bijective correspondences are obtained in Section~\ref{section4}. In Section~\ref{section5} we focus on the untwisted affine case and classify all (maximal) symmetric regular subalgebras. We end with some remarks on the non-symmetric case and the discussion of some open problems.  

\section{Preliminaries and root generated Lie algebras}\label{section2}
\subsection{}Throughout this paper we denote by $\mathbb{C}$ the field of complex numbers and by $\mathbb{Z}$ (resp. $\mathbb{Z}_{+}$, $\mathbb{N}$) the subset of integers (resp. non-negative, positive integers). For an index set $I$ let $\bz^I$ (resp. $\bz_+^I$, $\bz_-^I$) be the set of all $I$-tuples of integers (resp. non-negative, non-positive integers) with finitely many non-zero entries. For an indeterminate $t$ we let $\mathbb C[t]$ (resp. $\mathbb C[t^{\pm}]$) be the ring of polynomials (resp. Laurent polynomials) in $t$. By convention, the empty sum is defined to be equal to 0 and for a collection of subsets $\{C_k\}$ of a set $C$ we have $\bigcap_{k\in\emptyset}C_k=C$.
\subsection{} Let $\lie g$ be a symmetrizable Kac-Moody algebra with Cartan subalgebra $\mathfrak{h}$ and bilinear form $(\cdot,\cdot)$. The root system of $\lie g$ with respect to $\lie h$ is denoted by $\Delta$ and the set of real and imaginary roots respectively are denoted by $\Delta^{\mathrm{re}}$ and $\Delta^{\mathrm{im}}$ respectively. We have 
\begin{equation}\label{grklchar}\Delta^{\mathrm{re}}=\{\alpha\in \Delta: (\alpha,\alpha)>0\},\ \ \Delta^{\mathrm{im}}=\{\alpha\in \Delta: (\alpha,\alpha)\leq 0\}.\end{equation} 
We choose a Borel subalgebra $\lie b$ in $\lie g$ and let $\Delta^+$ be the corresponding set of positive roots. Let $W$ be the Weyl group of $\lie g$ which is a Coxeter group generated by reflections $s_{\alpha},\ \alpha\in \Delta^{\mathrm{re}}$ and set $W_{M}=\langle s_{\alpha}: \alpha\in M\rangle$ for any subset $M\subseteq \Delta^{\mathrm{re}}$. We have the following root space decomposition 
$$\lie{g}=\lie h\oplus \bigoplus_{\alpha\in \Delta} \lie{g}_\alpha,\ \ \lie{g}_\alpha=\left\{x\in\lie g: [h,x]=\alpha(h)x\  \forall h\in \lie h\right\}$$
and note that $\dim \lie g_\alpha=1$ for all real roots $\alpha \in \Delta^{\mathrm{re}}$. We fix in the rest of the paper generators $\lie g_\alpha=\mathbb{C}x_\alpha$ for $\alpha \in \Delta^{\mathrm{re}}$ and denote by $\mathfrak{sl}(\alpha)\cong \mathfrak{sl}_2(\mathbb{C})$ the subalgebra of $\lie g$ generated by the triple $\langle x_\alpha, x_{-\alpha},\alpha^\vee \rangle$, where $\alpha^\vee=[x_\alpha, x_{-\alpha}]\in \lie h$. The aim of this article is to study symmetric regular subalgebras of Kac-Moody algebras introduced first by Dynkin in the context of finite-dimensional semi-simple Lie algebras \cite{dynkin1952semisimple}.
\begin{defn} A $\lie h$-invariant Lie subalgebra of $\lie{g}$ is called a \textit{regular subalgebra}. Given a regular subalgebra $\lie{s}$, we define
$$\Delta(\lie{s}):=\{\alpha\in\Delta:(\lie{g}_{\alpha}\cap \lie s)\neq 0\},\ \ \lie{s}_{\alpha}:=\lie{g}_\alpha\cap \lie{s},\ \alpha\in\Delta(\lie s)$$ to be the set of roots of $\lie s$ with respect to $\lie h$.  We call $\lie s$ \textit{symmetric} if $\Delta(\lie{s})=-\Delta(\lie{s})$.
\end{defn}
Note that a regular subalgebra admits the following decomposition
$$\lie{s}=(\lie h\cap \lie s)\oplus \bigoplus_{\alpha\in \Delta(\lie s)}\lie{s}_\alpha.$$ 
In a first step, it is natural to attack the problem of understanding the structure of regular subalgebras in a purely combinatorial way. The set of roots $\Delta(\lie s)$ will contain subsets satisfying certain combinatorial properties (see the introduction of the article) which have been tried to classify in the literature by various authors. This leads to the following definitions. 

\begin{defn}
A non-empty subset $\Psi\subseteq \Delta$ is called 
\begin{itemize}
\item \textit{ symmetric} if $\Psi=-\Psi$.

    \vspace{0,2cm}
    
    \item \textit{ a subroot system} if $s_{\alpha}(\beta)\in \Psi$ for all $\alpha\in \Psi\cap \Delta^{\mathrm{re}}$ and $\beta\in \Psi$.
    
    \vspace{0,2cm}
    
    \item \textit{ closed} if for all $\alpha,\beta\in \Psi$ with $\alpha+\beta\in \Delta$ we have $\alpha+\beta\in \Psi$.

       \vspace{0,2cm}
       
    \item \textit{ real closed} if $\Psi\subseteq \Delta^{\mathrm{re}}$ and the condition
    $$\alpha+\beta\in \Delta^{\mathrm{re}},\ \alpha,\beta\in \Psi \implies \alpha+\beta\in \Psi $$ holds, i.e. the sum of two roots in $\Psi$ is either not a root, imaginary or contained in $\Psi$.
\end{itemize}

\end{defn}
The classification of such subsets is quite challenging and wide open for Kac-Moody algebras. In particular cases, for example when $\Delta$ is a finite root system, $\Psi$ is a maximal real closed subroot system or a symmetric real closed subset of the roots of an (extended) affine Lie algebra, classifications are given in the literature (see for example \cite{BHV23a,DdeG21,KV21a,roy2019maximal} and references therein). However, the predescribed subsets do not determine the structure of regular subalgebras completely. In this article we focus on understanding the structure of so-called root generated subalgebras (see \eqref{subal1} for the definition) and show that their structure is governed by real closed subroot systems (see Theorem~\ref{mainresbij}). Moreover, we will answer the fundamental question when these algebras are again of Kac-Moody type in the same fashion as closed subroot systems of finite root systems are again finite root systems of finite-dimensional semi-simple Lie algebras \cite{dynkin1952semisimple}.

\subsection{} \textit{In the rest of the article we let without further comment $\mathfrak{s}$ to be a symmetric regular subalgebra of a Kac-Moody algebra $\lie g$.} 
We set 
$$\Delta(\lie{s})^{\mathrm{re}}=\Delta^{\mathrm{re}}\cap \Delta(\lie{s}),\ \ \Delta(\lie{s})^{\mathrm{im}}=\Delta^{\mathrm{im}}\cap \Delta(\lie{s}),$$ 
$$\Delta(\lie{s})^{\mathrm{re},+}=\Delta(\lie{s})^{\mathrm{re}}\cap \Delta^+,\ \ \Delta(\lie{s})^{\mathrm{im},+}=\Delta(\lie{s})^{\mathrm{im}}\cap \Delta^+$$
and show in the rest of this section that $\Delta(\lie s)^{\mathrm{re}}$ is a real closed subroot system (see Proposition~\ref{keyprop}). In general, $\Delta(\lie{s})$ is not necessarily closed as the following example shows.
\begin{example}  Let $\Delta$ be of type $A_{5}^{(1)}$ and $\lie{s}=\lie{g}_1\oplus \lie{g}_2$ where $\lie{g}_1$ and $\lie{g}_2$ respectively is the subalgebra generated by the root spaces corresponding to the roots in $S_1$ and $S_2$ respectively where 
$$S_1=\{\pm\alpha+2\bz\delta:\alpha=\alpha_1,\alpha_2,\alpha_1+\alpha_2\},\ \ S_2=\{\pm\alpha+3\bz\delta:\alpha=\alpha_4,\alpha_5,\alpha_4+\alpha_5\}.$$ Then $2\delta,\alpha_4\in \Delta(\lie{s})$ but $\alpha_4+2\delta\in \Delta\backslash \Delta(\lie{s}).$
\end{example}
First we need some elementary results to prepare. The following lemma is standard and is proven for the readers convenience (see also \cite[Lemma 3.7]{marquis2021structure} for the case $\lie s=\lie g$).
\begin{lem}\label{sumroot}
  Let $\alpha, \beta\in \Delta(\lie s)$. 
  \begin{enumerate}
      \item If  $\alpha\in \Delta(\lie s)^{\mathrm{re}}$, then we have $[\lie{s}_\alpha,\lie{s}_\beta]\neq 0$ if and only if $\alpha+\beta\in \Delta(\lie{s}).$
              \item If $\alpha\neq \beta$ and $(\alpha, \beta)<0$, then we have $\alpha+\beta\in \Delta(\lie{s}).$
                            \item If $\alpha\in \Delta(\lie s)^{\mathrm{im}}$ with $\alpha\neq \beta$ and $(\alpha, \beta)<0$, then $\beta+k\alpha \in \Delta(\lie s)$ for all $k\in \mathbb{N}.$

              \item If $\alpha\in \Delta(\lie s)^{\mathrm{im}}$ is a non-isotropic root with $\mathrm{dim}(\lie s_\alpha)>1$, then  $k\alpha\in \Delta(\lie s)^{\mathrm{im}}$ for all $k\in \mathbb{N}.$

  \end{enumerate}
    \end{lem}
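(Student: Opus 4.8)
The plan is to reduce every assertion to the non-vanishing of a single bracket: since $\lie{s}$ is graded by $\lie h$, one has $[\lie{s}_\alpha,\lie{s}_\beta]\subseteq\lie{s}_{\alpha+\beta}$, so $[\lie{s}_\alpha,\lie{s}_\beta]\neq 0$ already forces $\alpha+\beta\in\Delta(\lie s)$. Throughout I assume $\alpha+\beta\neq 0$; this is a harmless standing hypothesis, since for $\beta=-\alpha$ the bracket lands in $\lie h$ and the conclusions concern honest roots. Two engines drive the argument: $\mathfrak{sl}_2$-representation theory, available whenever a real root is in play, and the positive-definite contravariant Hermitian form $\langle\cdot,\cdot\rangle$ on $\lie g$ (built from the Chevalley involution $\omega$ by $\langle x,y\rangle=-(x,\omega y)$, suitably conjugated), which is the substitute when only imaginary roots occur.

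For (1), symmetry of $\lie s$ and $\alpha\in\Delta(\lie s)^{\mathrm{re}}$ give $x_{\pm\alpha}\in\lie s$ and $\alpha^\vee\in\lie s$, so $\mathfrak{sl}(\alpha)\subseteq\lie s$ and $M:=\bigoplus_j\lie{s}_{\beta+j\alpha}$ is a finite-dimensional $\mathfrak{sl}(\alpha)$-submodule of $\lie g$. The forward implication is immediate from the grading. For the converse, $\lie{s}_\beta=M_m$ and $\lie{s}_{\alpha+\beta}=M_{m+2}$ (with $m=\langle\beta,\alpha^\vee\rangle$) are both nonzero; the weight support of a finite-dimensional $\mathfrak{sl}_2$-module is a symmetric interval $[-N,N]$, so $-N\le m$ and $m+2\le N$, whence $m<N$, and the map $x_\alpha\colon V(N)_m\to V(N)_{m+2}$ on the top irreducible component $V(N)$ is nonzero. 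Thus $[\lie{s}_\alpha,\lie{s}_\beta]=x_\alpha\cdot M_m\neq 0$.

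The heart of (2) and (3) is the claim that for $\alpha\neq\beta$ with $(\alpha,\beta)<0$ one has $[x,y]\neq 0$ for \emph{every} nonzero $x\in\lie g_\alpha$, $y\in\lie g_\beta$; applied to our fixed generators this gives $[\lie{s}_\alpha,\lie{s}_\beta]\neq 0$ at once. To prove the claim I would expand $\langle[x,y],[x,y]\rangle=-([x,y],[\omega x,\omega y])$ by invariance and the Jacobi identity; the terms in which $\omega y$ is bracketed back against $y$ collapse to multiples of $t_\beta$ (here $\alpha\neq\beta$ is exactly what makes $[x,\omega y]$ lie in a single root space), leaving the identity $\langle[x,y],[x,y]\rangle=\langle[x,\omega y],[x,\omega y]\rangle-(\alpha,\beta)\langle x,x\rangle\langle y,y\rangle$. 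Positive definiteness makes the first summand $\ge 0$ and $-(\alpha,\beta)\langle x,x\rangle\langle y,y\rangle>0$, so the left side is strictly positive and $[x,y]\neq 0$; this is (2) (the real-root case also follows from (1) via $\mathfrak{sl}_2$-injectivity for $\langle\beta,\alpha^\vee\rangle<0$). For (3) I then induct on $k$: as $\alpha$ is imaginary, $(\alpha,\alpha)\le 0$, so $(\alpha,\beta+(k-1)\alpha)=(\alpha,\beta)+(k-1)(\alpha,\alpha)\le(\alpha,\beta)<0$; a short check excludes $\beta+(k-1)\alpha\in\{\alpha,-\alpha\}$, and applying (2) to the pair $\big(\alpha,\beta+(k-1)\alpha\big)$ advances the string from $\beta+(k-1)\alpha\in\Delta(\lie s)$ to $\beta+k\alpha\in\Delta(\lie s)$.

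For (4), choose independent $x_1,x_2\in\lie{s}_\alpha$. The same contravariant computation, now with both arguments in $\lie g_\alpha$, expresses $\langle[x_1,x_2],[x_1,x_2]\rangle$ as a positive multiple of $(\alpha,\alpha)\big(|\langle x_1,x_2\rangle|^2-\langle x_1,x_1\rangle\langle x_2,x_2\rangle\big)$; the Gram factor is $<0$ for independent vectors by Cauchy–Schwarz, and $(\alpha,\alpha)<0$, so $[x_1,x_2]\neq 0$ and $2\alpha\in\Delta(\lie s)$. Since $(\alpha,2\alpha)=2(\alpha,\alpha)<0$ and $\alpha\neq 2\alpha$, part (3) with $\beta=2\alpha$ yields $(k+2)\alpha\in\Delta(\lie s)$ for all $k$, hence $k\alpha\in\Delta(\lie s)$ for every $k\in\bn$, all imaginary as $(k\alpha,k\alpha)<0$. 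The main obstacle is exactly this non-vanishing of brackets of imaginary root vectors: no $\mathfrak{sl}_2$-triple is available, and for non-isotropic $\alpha$ the naive root-string and norm-bound arguments break down, since imaginary strings may be infinite and root norms are unbounded below. The positive definiteness of the contravariant form on imaginary root spaces is the decisive input that circumvents this, and it is the one external fact I would invoke.
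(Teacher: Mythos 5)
Your argument is correct, and its overall architecture matches the paper's: part (1) via the finite-dimensional $\mathfrak{sl}(\alpha)$-module $\bigoplus_k\lie s_{\beta+k\alpha}$ (the paper phrases the converse through maximal vectors rather than the top irreducible constituent, but this is the same computation), part (3) by iterating part (2), and part (4) by first producing $2\alpha$ from two independent vectors in $\lie s_\alpha$ and then iterating. The one substantive divergence is in parts (2) and (4): the paper simply cites Theorem A of Marquis for the non-vanishing of $[x,y]$ when $(\alpha,\beta)<0$ (resp.\ when $\alpha=\beta$ is non-isotropic and $x,y$ are independent), whereas you reprove that black box via the Kac--Peterson positive-definiteness of the contravariant Hermitian form on root spaces, arriving at the identity $\langle[x,y],[x,y]\rangle=\langle[x,\omega y],[x,\omega y]\rangle-(\alpha,\beta)\langle x,x\rangle\langle y,y\rangle$ and its Gram-determinant analogue for $\alpha=\beta$. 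This makes your write-up self-contained at the cost of importing Kac's Theorem 11.7 instead of Marquis's theorem; the signs and the exclusion of the degenerate cases $\beta+(k-1)\alpha=\pm\alpha$ all check out.
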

    \begin{proof}
        We prove part $(1)$ first.
        Suppose $[\lie{s}_\alpha,\lie{s}_\beta]\neq 0,$ then it is immediate that $\alpha+\beta\in \Delta(\lie s)$.
        Conversely, assume that $\alpha+\beta$ is a root of $\lie s$ and consider the direct sum
        $$M^{\lie s}(\alpha,\beta):=\bigoplus_{k\in \bz}\lie s_{\beta+k\alpha}.$$ Since $\mathfrak{sl}(\alpha)\subseteq \lie s$ ($\lie s$ is symmetric) we have that $M^{\lie s}(\alpha,\beta)$ is a  $\mathfrak{sl}(\alpha)$--module and finite-dimensional by \cite[Proposition 5.1(c)]{kac1990infinite}. If $[\lie{s}_{\alpha},\lie{s}_{\beta}]=0$, then each non-zero vector in $\lie{s}_{\beta}$ is a maximal vector for the $\mathfrak{sl}(\alpha)$ action and hence $(\beta,\alpha^\vee)\geq 0$; in particular $\lambda:=(\beta,\alpha^\vee)+2>0$. Now 
        since $\alpha+\beta$ is a root, we can choose a non-zero vector $v\in\lie{s}_{\alpha+\beta}$ which is an eigenvector for the action of $\alpha^\vee$ with eigenvalue $\lambda$. Since $\lambda>0,$  we have from $\mathfrak{sl}_2$-theory that $0\neq x_{-\alpha} .v=[x_{-\alpha} ,v]\in \lie{s}_{\beta}$ and hence $[x_\alpha ,[x_{-\alpha} ,v]]\in [\lie{s}_{\alpha},\lie{s}_{\beta}]$ is a non-zero scalar multiple of $v$ (again from $\mathfrak{sl}_2$-theory). This is a contradiction and the proof is completed.
        For part (2), say $\alpha, \beta\in \Delta(\lie s)$ satisfying $\alpha\neq \beta$ and $(\alpha, \beta)<0$. We have $[\lie s_\alpha, \lie s_\beta]\neq 0$ 
        from \cite[Theorem A]{marquis2021structure}, and hence $\alpha+\beta\in \Delta(\lie{s}).$     Part $(3)$ follows from the repeated use of part $(2)$. For part $(4)$ we use again \cite[Theorem A]{marquis2021structure} to get
        $2\alpha \in \Delta(\lie s)^{\mathrm{im}}$ as $(\alpha, \alpha)<0$ and there exists $x, y\in \lie s_\alpha$ such that $\mathbb{C}x\neq \mathbb{C}y.$ Now repeated use of
        \cite[Theorem A]{marquis2021structure} proves that $k\alpha\in \Delta(\lie s)^{\mathrm{im}}$ for all $k\in \mathbb{N}$ as  $(\alpha, k\alpha)<0.$
       This completes the proof.
    \end{proof}

    The assumption of the lemma that $\alpha$ is a real root is necessary. For example, for an affine Kac-Moody algebra  we could take $\alpha,\beta\in \Delta^{\mathrm{im}}$ such that $\beta\neq -\alpha$. Then we have  $\alpha+\beta \in \Delta$, but $[\lie g_\alpha, \, \lie g_\beta]=0$. 
  \subsection{}   We need one more straightforward lemma.
    \begin{lem}\label{sumimaginary}
    Let $\alpha, \beta\in \Delta(\lie s)^{\mathrm{re}}$ such that $\beta+k\alpha\in \Delta(\lie s)^{\mathrm{im}}$ for some $k\in \pm \mathbb{N}.$ Then we have $(\mathrm{ad}\, x)^{|k|+1}(y)\neq 0$ for any non-zero $x \in \lie s_{\pm \alpha}$ and $y \in \lie s_\beta.$ 
    \end{lem}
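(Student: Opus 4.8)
The plan is to analyze the $\mathfrak{sl}(\alpha)$-module structure on a suitable string of root spaces, exactly as in the proof of part (1) of the previous lemma. Fix a nonzero $x \in \lie s_{\pm\alpha}$ and $y \in \lie s_\beta$, and consider the $\mathfrak{sl}(\alpha)$-module
$$M^{\lie s}(\alpha,\beta) = \bigoplus_{j \in \bz} \lie s_{\beta + j\alpha},$$
which is a finite-dimensional $\mathfrak{sl}(\alpha)$-module since $\mathfrak{sl}(\alpha) \subseteq \lie s$ by symmetry and by \cite[Proposition 5.1(c)]{kac1990infinite}. The weight of $\lie s_{\beta + j\alpha}$ under $\alpha^\vee$ is $(\beta,\alpha^\vee) + 2j$, so the hypothesis that $\beta + k\alpha$ is a root of $\lie s$ guarantees that the weight $(\beta,\alpha^\vee) + 2k$ occurs in $M^{\lie s}(\alpha,\beta)$.

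The key observation I would make is that applying $(\mathrm{ad}\,x)^{|k|+1}$ to $y$, where $x \in \lie s_{\pm\alpha}$, shifts the $\alpha^\vee$-weight by $\pm 2(|k|+1)$ and lands the result (if nonzero) in the root space $\lie s_{\beta \pm (|k|+1)\alpha}$. I would split into the two sign cases governed by $k \in \mathbb{N}$ versus $k \in -\mathbb{N}$ and by the choice $x \in \lie s_\alpha$ or $x \in \lie s_{-\alpha}$, and arrange the indices so that repeatedly raising (or lowering) by $\alpha$ moves the weight $(\beta,\alpha^\vee)$ of $y$ through the root string toward $\beta + k\alpha$ and then beyond. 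Concretely, if $k > 0$ and $x \in \lie s_\alpha$, then $(\mathrm{ad}\,x)^{k}(y)$ already has weight $(\beta,\alpha^\vee)+2k$, the weight of the imaginary root $\beta + k\alpha$. The point is that nonvanishing of $(\mathrm{ad}\,x)^{|k|+1}(y)$ is equivalent, via the standard $\mathfrak{sl}_2$ highest/lowest weight theory, to the claim that $y$ is not annihilated before reaching (indeed, past) the imaginary member of the string.

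The crux of the argument, which I expect to be the main obstacle, is to rule out the possibility that the string breaks off exactly at or before the imaginary root, i.e.\ that $(\mathrm{ad}\,x)^{|k|+1}(y) = 0$ while the imaginary weight space is still nonzero. Here I would invoke the fundamental fact (Kac, \cite[Proposition 5.1(c)]{kac1990infinite}) that $\alpha$-strings through roots are unbroken together with the structure of finite-dimensional $\mathfrak{sl}_2$-modules: in an $\mathfrak{sl}_2$-module a vector whose image under a power of the raising (resp.\ lowering) operator vanishes must lie in a proper sub-collection of weight spaces, and the unbrokenness of the string forces the weights to form a symmetric, contiguous set around $0$. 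Since the imaginary root $\beta + k\alpha$ contributes a weight vector that $\mathrm{ad}\,x$ cannot kill prematurely—otherwise the string would terminate at an imaginary root, contradicting that real roots $\pm\alpha$ generate an $\mathfrak{sl}_2$ acting with integral weights on the unbroken string reaching an imaginary node—we conclude $(\mathrm{ad}\,x)^{|k|+1}(y) \neq 0$.

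The bookkeeping I would be most careful about is the interaction between the two independent sign choices (the sign of $k$ and whether $x \in \lie s_\alpha$ or $\lie s_{-\alpha}$), since in the mismatched cases $\mathrm{ad}\,x$ moves the weight \emph{away} from the imaginary root and one must instead use the lowest-weight description of the module; both mismatched cases reduce to the aligned ones after replacing $(\alpha,\beta)$ by $(-\alpha,\beta+k\alpha)$ or applying $s_\alpha$, so I would record this symmetry once and then treat only the representative case $k \in \mathbb{N}$, $x \in \lie s_\alpha$ in detail.
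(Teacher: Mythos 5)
There is a genuine gap. Your setup (the finite-dimensional $\mathfrak{sl}(\alpha)$-module $M^{\lie s}(\alpha,\beta)$ and the observation that everything reduces to showing $y$ survives $|k|+1$ applications of the raising operator) matches the paper, but your ``crux'' paragraph substitutes a qualitative appeal to unbroken root strings for the one quantitative fact that actually drives the proof. In a \emph{reducible} finite-dimensional $\mathfrak{sl}_2$-module, knowing that the weight space at weight $(\beta,\alpha^\vee)+2k$ is nonzero (because $\beta+k\alpha$ is a root of $\lie s$) tells you nothing about whether $(\mathrm{ad}\,x)^{k}y\neq 0$ for your particular $y\in\lie s_\beta$: that weight space may lie entirely in irreducible constituents disjoint from the cyclic submodule generated by $y$. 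The number of times a weight vector can be raised before dying is controlled by \emph{its own weight}, namely $e^{j}v\neq 0$ for all $j\le m$ whenever $v$ has weight $-m\le 0$. So what you must prove is the inequality $-(\beta,\alpha^\vee)\ge k+1$, and this is exactly the step your proposal never supplies. The paper gets it by expanding $(\beta+k\alpha,\beta+k\alpha)\le 0$ (imaginarity of $\beta+k\alpha$) to obtain $-(\beta,\alpha^\vee)\ge k+\tfrac{(\beta,\beta)}{k(\alpha,\alpha)}>k$, and then using that $(\beta,\alpha^\vee)\in\bz$ and that $(\alpha,\alpha),(\beta,\beta)>0$ because both roots are real; integrality bumps the strict inequality to $\ge k+1$, and the standard $\mathfrak{sl}_2$ fact finishes the proof. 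Your argument that ``otherwise the string would terminate at an imaginary root'' does not close this gap: even granting that string endpoints are real, that only yields $q\ge k+1$ for the ambient string, not the weight bound on $y$ needed when the module is not irreducible.

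A secondary point: your plan to handle the ``mismatched'' sign cases (e.g.\ $k>0$ with $x\in\lie s_{-\alpha}$) by a symmetry reduction cannot work, because those cases are simply false --- take $\lie g$ untwisted affine over $\mathring{\lie g}=\mathfrak{sl}_2$, $\alpha=\alpha_1$, $\beta=\alpha_0$, $k=1$; then $\beta+\alpha=\delta$ is imaginary but $\beta-\alpha\notin\Delta$, so $(\mathrm{ad}\,x_{-\alpha})(y)=0$. The $\pm$ signs in the statement are meant to be read as correlated (the sign of $k$ dictates the sign of the root space containing $x$), which is how the paper treats it: it proves the case $k\in\bn$, $x\in\lie s_\alpha$ and notes the case $k<0$ is symmetric.
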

    
    \begin{proof} Let $k\in \mathbb{N}.$
        Since $\beta+k\alpha$ is an imaginary root we have $(\beta+k\alpha,\beta+k\alpha)\le 0$ which implies 
        $$-(\beta,\alpha^\vee)\geq k+\frac{(\beta,\beta)}{k(\alpha,\alpha)}\implies -(\beta,\alpha^\vee) \geq k+1$$
where the second inequality is a consequence of $(\beta,\alpha^\vee)\in\bz$ and $\frac{(\beta,\beta)}{(\alpha,\alpha)}>0$ (both roots are real).
 Now the result follows from the representation theory of $\mathfrak{sl}(\alpha).$ The case $k<0$ is done similarly.
\end{proof}
For $\alpha\in \Delta^{\mathrm{re}}$ and $\beta\in \Delta$, we denote by $\mathbb{S}(\alpha,\beta)$ by the $\alpha$-string through $\beta$. Then $\mathbb{S}(\alpha,\beta)$ is a finite, connected subset of $\Delta$ given as follows
$$\mathbb{S}(\alpha,\beta)=\{\beta-p\alpha,\dots,\beta-\alpha,\beta,\beta+\alpha,\dots,\beta+q\alpha\}$$
where $p$ and $q$ are non-negative integers such that $p-q=(\beta,\alpha^\vee)$ (see \cite{kac1990infinite} for details). Furthermore, set $\mathbb{S}(\alpha,\beta)^+:=\{\beta,\beta+\alpha,\dots,\beta+q\alpha\}$. It is known that $|\mathbb{S}(\beta,\alpha)\cap\Delta^{\mathrm{re}}|\le 4$ (see \cite[Exercise 5.14]{kac1990infinite}) and in fact the following condition was proved by Morita in \cite{morita1988root}:
\begin{equation}\label{thmmorita}
|\mathbb S(\alpha,\beta)\cap \Delta^{\mathrm{re}}|\in\{3,4\}\ \text{for some $(\alpha,\beta)\in \Delta^{\mathrm{re}}\times \Delta$ $\iff \exists i,j\in I$: $a_{ij}=-1$, $a_{ji}<-1$} 
\end{equation}
For a more explicit description where the real roots occur in a root string, the reader is referred to \cite[Proposition 1]{billig1995root}. The following result is important in what follows next.
    \begin{prop}\label{keyprop}
    Let $\lie{s}$ be a regular symmetric subalgebra of $\lie{g}$ and $\alpha,\beta\in \Delta(\lie{s})^{\mathrm{re}}$. 
    \begin{enumerate}
    \item\label{realclosed}  If $\alpha+\beta\in \Delta,$ then we have $\alpha+\beta\in \Delta(\lie{s}).$
        \vspace{0,1cm}
            \item\label{realsubroot} We have $s_\alpha(\beta)\in \Delta(\lie{s}).$
\vspace{0,1cm}
        \item\label{containsrootstring} We have $\mathbb{S}(\alpha,\beta)\subseteq \Delta(\lie{s}).$
      \end{enumerate}
    In particular, $\Delta(\lie{s})^{\mathrm{re}}$ is a real closed subroot system of $\Delta.$
    \end{prop}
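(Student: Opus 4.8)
The plan is to establish the three numbered assertions in the order \eqref{realsubroot}, \eqref{realclosed}, \eqref{containsrootstring}, and then to read off the \say{in particular} clause directly from the definitions. Throughout I would work with the finite-dimensional $\mathfrak{sl}(\alpha)$-module $M^{\lie s}(\alpha,\beta)=\bigoplus_{k\in\bz}\lie s_{\beta+k\alpha}$, which is an $\mathfrak{sl}(\alpha)$-module because $\mathfrak{sl}(\alpha)\subseteq\lie s$ (symmetry), and which is finite-dimensional because $\alpha$ is real. Its $\alpha^\vee$-weights are all congruent to $(\beta,\alpha^\vee)$ modulo $2$, so each weight is attained at a unique level $k$, and the weight-$0$-shift level $k=0$ is exactly $\lie s_\beta\neq 0$.

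For \eqref{realsubroot} I would invoke the fact that the character of any finite-dimensional $\mathfrak{sl}_2$-module is invariant under $w\mapsto -w$. Hence the weight $-(\beta,\alpha^\vee)$ space of $M^{\lie s}(\alpha,\beta)$ is nonzero; since this weight is attained only at $k=-(\beta,\alpha^\vee)$, we get $\lie s_{s_\alpha(\beta)}\neq 0$, i.e. $s_\alpha(\beta)\in\Delta(\lie s)$. For \eqref{realclosed} the decisive observation is that real root spaces are one-dimensional, so $\lie s_\alpha=\lie g_\alpha$ and $\lie s_\beta=\lie g_\beta$ for $\alpha,\beta\in\Delta(\lie s)^{\mathrm{re}}$. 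Therefore $[\lie s_\alpha,\lie s_\beta]=[\lie g_\alpha,\lie g_\beta]$, and the latter is nonzero by Lemma~\ref{sumroot}(1) applied to $\lie g$ itself (using $\alpha+\beta\in\Delta$ and that $\alpha$ is real). Applying Lemma~\ref{sumroot}(1) now to $\lie s$ yields $\alpha+\beta\in\Delta(\lie s)$.

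For \eqref{containsrootstring} I would first note, by the same character-symmetry argument as above, that $\{k:\lie s_{\beta+k\alpha}\neq 0\}$ is a gap-free interval $[-a,b]$ symmetric about the centre $(q-p)/2$ of the string, with $b-a=q-p$; it therefore suffices to prove $b=q$, as then $a=p$. I would argue by contradiction, assuming $b<q$. Then $\beta+b\alpha$ sits at the top level of $M^{\lie s}(\alpha,\beta)$, so $x_\alpha$ annihilates $\lie s_{\beta+b\alpha}$, while $\beta+(b+1)\alpha$ is still a root of $\lie g$. I would now split on whether $\beta+b\alpha$ is real or imaginary. If $\beta+b\alpha$ is imaginary (so $b\geq 1$), Lemma~\ref{sumimaginary} with base point $\beta$ and $k=b$ gives $(\mathrm{ad}\,x_\alpha)^{b+1}(\lie s_\beta)\neq 0$, a nonzero subspace of $\lie s_{\beta+(b+1)\alpha}$. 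If $\beta+b\alpha$ is real, then $\beta+b\alpha\in\Delta(\lie s)^{\mathrm{re}}$, and I would look at $\beta+(b+1)\alpha$: when it is real, part \eqref{realclosed} applied to $\alpha$ and $\beta+b\alpha$ forces $\beta+(b+1)\alpha\in\Delta(\lie s)$; when it is imaginary, Lemma~\ref{sumimaginary} with base point $\beta+b\alpha$ and $k=1$ forces $(\mathrm{ad}\,x_\alpha)(\lie s_{\beta+b\alpha})\neq 0$. In every case $\lie s_{\beta+(b+1)\alpha}\neq 0$, contradicting the maximality of $b$. This gives $b=q$, hence $a=p$, and so $\mathbb{S}(\alpha,\beta)\subseteq\Delta(\lie s)$.

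Finally, the \say{in particular} statement is immediate: $\Delta(\lie s)^{\mathrm{re}}\subseteq\Delta^{\mathrm{re}}$ by definition; part \eqref{realsubroot} shows stability under the reflections $s_\alpha$, $\alpha\in\Delta(\lie s)^{\mathrm{re}}$ (the reflection of a real root in a real root is again real); and part \eqref{realclosed} gives the real-closedness condition. I expect the genuine obstacle to be part \eqref{containsrootstring}: ruling out premature truncation of the $\mathfrak{sl}(\alpha)$-string, since a priori $M^{\lie s}(\alpha,\beta)$ could be strictly smaller than the full $\lie g$-string. The delicate point is organizing the real/imaginary casework along the string so that Lemma~\ref{sumimaginary} and part \eqref{realclosed} between them always propagate membership one level further.
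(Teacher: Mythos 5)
Your proposal is correct and follows essentially the same route as the paper's proof: part (1) via the one-dimensionality of real root spaces together with Lemma~\ref{sumroot}, and parts (2) and (3) via the finite-dimensional $\mathfrak{sl}(\alpha)$-module $M^{\lie s}(\alpha,\beta)$ with Lemma~\ref{sumimaginary} supplying the step across the imaginary portion of the root string. The only differences are organizational and harmless: you derive (2) and (3) from the symmetry and gap-freeness of the $\alpha^\vee$-weight set and a contradiction at the truncated top level, whereas the paper propagates upward from $\beta$ with explicit powers of $\mathrm{ad}\,x_\alpha$ (and your real/imaginary subcase split for $\beta+(b+1)\alpha$ is redundant, since part (1) already covers an imaginary sum).
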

    \begin{proof}
    \begin{enumerate}
        \item Since $\alpha,\beta$ are both real roots, we have $\lie{s}_\alpha=\lie{g}_\alpha,\ \lie{s}_\beta=\lie{g}_\beta.$ Since $\alpha+\beta\in \Delta,$ the result follows from Lemma \ref{sumroot}.
        
        \item If $(\beta,\alpha^\vee)=0,$ then $s_\alpha(\beta)=\beta$
        and there is nothing to show. 
        Since $s_{\alpha}(\beta)=s_{-\alpha}(\beta)$ we can assume without loss of generality that $(\beta,\alpha^\vee)< 0.$ 
Let $M^{\lie s}(\alpha,\beta)$ be the $\mathfrak{sl}(\alpha)$ submodule of $\lie{s}$ as in the proof of Lemma~\ref{sumroot}; recall that $M^{\lie s}(\alpha,\beta)$ is finite-dimensional. 
From the representation theory of $\mathfrak{sl}(\alpha)$ we have that $$(\mathrm{ad}\, x_\alpha)^{-(\beta,\alpha^\vee)}(x_{\beta})=[x_\alpha,[x_\alpha,\dots [x_\alpha,[x_\alpha, x_{\beta}]]\cdots ]\neq 0$$
and thus 
 $$\left\{\beta,\beta+\alpha,\dots,\beta-(\beta,\alpha^\vee)\alpha\right\}\subseteq \Delta(\lie{s}).$$ 
   \item We will only show that $\mathbb{S}(\alpha,\beta)^+\subseteq \Delta(\lie{s}).$ The statement is straightforward provided that $\mathbb{S}(\alpha,\beta)^+\subseteq \Delta^{\mathrm{re}}$. So let $k$ be the largest integer such that $\beta+k\alpha$ is an imaginary root.  From Lemma~\ref{sumimaginary} we immeadiately have 
$$\{\beta,\beta+\alpha,\dots,\beta+(k+1)\alpha\}\subseteq \Delta(\lie{s}).$$  Since the remaining roots $\beta+(k+1)\alpha,\beta+(k+2)\alpha,\dots,\beta+q\alpha$ are all real, the fact $\mathbb{S}(\alpha,\beta)^+\subseteq \Delta(\lie{s})$ follows by repeatedly applying Lemma \ref{sumroot}.
\end{enumerate}        
    \end{proof}
    \begin{rem}\label{remrootstring}
    The assumption $\beta\in \Delta(\lie s)^{\mathrm{re}}$ is needed in the statement of Proposition~\ref{keyprop}(3). For example, let $\lie{g}$ be an untwisted affine Lie algebra and $\alpha$ be a root of the underlying finite-dimensional simple Lie algebra $\mathring{\lie g}$ with Cartan subalgebra $\mathring{\lie h}$. Then we have $\mathbb{S}(\alpha,\delta)=\{-\alpha+\delta,\delta,\alpha+\delta\}$ where $\delta$ is the unique indivisible positive imaginary root of $\lie{g}$.  For $h\in\mathring{\lie h}$ 
    such that $\alpha(h)=0$ there is a regular symmetric subalgebra 
    $$\lie{s}=\bc \lie{g}_{\pm\alpha}\oplus \bc\alpha^\vee \oplus \bc h\otimes t^{\pm 1}\oplus \bc c$$
    where $c$ is the canonical generator of the center of $\lie g$. However $\mathbb{S}(\alpha,\delta)\not\subseteq \Delta(\lie{s})=\{\pm\alpha,\pm\delta\}.$  
\end{rem}

\section{Subroot systems, Root generated subalgebras and \texorpdfstring{$\pi$}{pi}-systems}\label{section3}
In this section we recall the notion of a $\pi$-system and address existence and uniqueness questions in our setting. 
\subsection{} Recall that the Weyl group $W$ of a Kac-Moody algebra $\lie g$ is a Coxeter group \cite[Proposition 3.13]{kac1990infinite}. Apart from its representation on $\lie h^*$ there is also the so-called geometric representation $\rho:W\to \GL(V)$ (see \cite{DV82} for an exposition) of $W$. In general the set of real roots of $W$ in its geometric representation is drastically different from the real roots of the Kac-Moody algebra $\lie g$ which is emphasized by the next example.
    
    \begin{example}
        This example is adapted from \cite[Exercise 5.27]{kac1990infinite}. Consider the generalized Cartan matrix $\begin{pmatrix}2 & -4\\-1 &2 \end{pmatrix}$ and let $\lie g$ be its Kac-Moody algebra. The positive real roots of $\lie g$ are given by 
        
        \begin{align}\label{realrootsexample}
           \Delta^+\cap \Delta^{\mathrm{re}}=&\left\{2j\alpha_1+(j+1)\alpha_2,\ (j+1)\alpha_1+\frac{1}{2}j\alpha_2:j\in 2\bz_+\right\}\bigcup\nonumber\\
            &\bigcup\left\{j\alpha_1+\frac{1}{2}(j+1)\alpha_2,\ 2(j+1)\alpha_1+j\alpha_2:j\in 2\bz_++1\right\}.
        \end{align}
        The Weyl group $W$ of $\lie g$ has the Coxeter matrix $\begin{pmatrix}1 & \infty\\\infty &1 \end{pmatrix}$ and the geometric representation  $V$ has a basis $e_1,e_2$ with bilinear form $$(e_i,e_i)=1 \ \ i=1,2;\ \ \ (e_1,e_2)=-1.$$ Since all real roots in $V$ are unit vectors, a real root $\alpha=ae_1+be_2$ must satisfy $(a-b)^2=1$ . Hence except for a very few cases, the roots appearing in equation \eqref{realrootsexample} are not real roots in $V$ if we replace $\alpha_i$ by $e_i$ for $i=1,2.$
    \end{example}
 Nevertheless, the same proof as in \cite{deodhar1989note} shows that for any subroot system $\Psi\subseteq \Delta^\mathrm{re}$ the subgroup $W_\Psi$ of the Weyl group $W$ is again Coxeter group with a canonical set of generators $\Pi(\Psi)$ which is constructed in \cite{deodhar1989note} as follows and can be adopted here. Define  
    $$\Omega=\Delta^+\cap\{w(\alpha): w\in W_{\Psi},\ \alpha\in \Psi\}\subseteq \Delta^+\cap \Delta^{\mathrm{re}}$$ and consider the partial order (which depends on $\Omega$)
    $$\gamma_1\preceq \gamma_2 \iff \gamma_2=a\gamma_1+\sum_{\tau\in \Omega\backslash{\{\gamma_1,\gamma_2\}}}a_{\tau}\tau,\ \ \text{for some } a\in \mathbb{Q}_{>0},\ a_{\tau}\in\mathbb{Q}_{\geq 0}.$$
Then $(W_{\Psi},S_{\Psi})$ is a Coxeter group where $S_{\Psi}:=\{s_{\beta}: \beta\in \Pi(\Psi)\}$ and $\Pi(\Psi) \subseteq \Psi\cap \Delta^+$ denotes the set of minimal elements with respect to the above order, i.e. if $\gamma_1\preceq \gamma_2$ with $\gamma_1\in \Psi\cap \Delta^+$ and $\gamma_2\in \Pi(\Psi)$, then $\gamma_1=\gamma_2$. From the definition it is straightforward check that we have the alternative description \begin{equation}\label{altdes1}\Pi(\Psi)=\left\{\beta\in \Psi\cap \Delta^+: s_\beta((\Psi\cap\Delta^+)\backslash\{\beta\})=(\Psi\cap \Delta^+)\backslash\{\beta\}\right\}.\end{equation}
Moreover, for $\alpha,\beta\in\Pi(\Psi)$ with $\alpha\neq\beta$ we have $(\alpha,\beta)\leq 0$ since $(\alpha,\beta)>0$ would imply 
          $$\alpha=(\alpha,\beta^{\vee})\beta+s_{\beta}(\alpha)\implies \beta\preceq \alpha$$
   which is not possible. Last, we have that $W_{\Pi(\Psi)}(\Pi(\Psi))=\Psi$ which we will use without further comment in the rest of the article. 
\subsection{} We record the following definition. 
\begin{defn}
        A $\pi$\textit{-system} $\Sigma$ of a subroot system $\Psi$ is a subset of $\Psi\cap \Delta^{\mathrm{re}}$ which satisfies the condition $\alpha-\beta\notin\Psi$ for all $\alpha,\beta\in \Sigma.$
    \end{defn}
    
We emphasize that there seems to be no uniform definition of a $\pi$-system in the literature. In \cite{dynkin1952semisimple} a $\pi$-system is required to be linearly independent and in \cite{feingold2004subalgebras} a $\pi$-system doesn't need to be linearly independent, however should contain only positive roots. The authors of \cite{carbone2021varvec} drop both restrictions. Note that a $\pi$-system of $\Delta$ is by definition a $\pi$-system of any subroot system containing the $\pi$-system. \par

The next lemma shows that the above constructed set of minimal elements $\Pi(\Psi)$ gives under a suitable condition on $\Psi$ a $\pi$-system of $\Delta$.
  
  \begin{lem}\label{baseisapisystem}Let $\Psi\subseteq \Delta^{\mathrm{re}}$ be subroot system. \begin{enumerate}[leftmargin=*]
      \item The above constructed set $\Pi(\Psi)$ is a $\pi$-system of $\Psi$. If $\Psi$ is a real closed subroot system, then $\Pi(\Psi)$ is a $\pi$-system of $\Delta$.
    
      \item Let $\Sigma$ be a $\pi$-system of $\Delta$. Then any element of $W_{\Sigma}(\Sigma)$ can be written as a non-negative or non-positive integral linear combination of elements in $\Sigma$.
  \end{enumerate} 
  \begin{proof} We first prove part (1).  Let $\Psi$ be a subroot system and $\alpha,\beta\in \Pi(\Psi)$ such that $\gamma:=\alpha-\beta\in \Psi.$ Without loss of generality we can assume $\gamma\in \Psi\cap \Delta^{+}$ (otherwise we replace $\gamma$ by $-\gamma$). If $\alpha=2\beta$, then $\alpha\succeq \beta$. Otherwise $\gamma\in\Omega\backslash{\{\alpha,\beta}\}$ and again $\alpha=\beta+\gamma$ gives $\alpha\succeq \beta$. This implies that $\alpha=\beta$ which is a contradiction.\par
  Now assume that $\Psi$ is a real closed subroot system and $\alpha-\beta\in \Delta$ for some $\alpha,\beta\in \Pi(\Psi)$. From the discussion preceding the lemma we have $(\alpha,\beta)\le 0$ and therefore $\alpha-\beta\in \Delta^{\mathrm{re}}$ since
    $$(\alpha-\beta,\alpha-\beta)=(\alpha,\alpha)+(\beta,\beta)-2(\alpha,\beta)>0.$$  Now $\Psi$ is real closed in $\Delta,$ implying that $\alpha-\beta\in \Psi$ and we have a contradiction. \vspace{0,1cm}
    
    For the second part let $\Sigma=\{\beta_i: i\in I\}$ and consider the subalgebra $\lie g_{kr}$ of $\lie g$ generated by $\lie g_{\pm\beta_k}=\bc x_{\pm\beta_k}$ and $\lie g_{\pm\beta_r}=\bc x_{\pm \beta_r}.$ Let $W_{kr}$ be the subgroup of $W_{\Sigma}$ generated by $s_{\beta_k}$ and $s_{\beta_r}$ and $\ell_{\{k,r\}}$ be the length function on $W_{kr}.$ Since $\Sigma$ is a $\pi$-system of $\Delta$, we can define the generalized Cartan matrix $B_{kr}=(b_{ij})$ by 
    $$b_{11}=b_{22}=2,\  b_{21}=(\beta_k,\beta_r^\vee),\  b_{12}=(\beta_r,\beta_k^\vee)$$
    and let $\lie g'(B_{kr})$ be the derived algebra of the Kac-Moody algebra associated to $B_{kr}$. Obviously $\{\beta_k,\beta_r\}$ is a linearly independent $\pi$-system of $\Delta$ and thus we get from \cite[Theorem 2.5 and Proposition 2.6]{carbone2021varvec} an isomorphism 
   $$ \varphi:\lie g'(B_{kr})\to \lie g_{kr}$$ 
    $$e_1\mapsto x_{\beta_k}, \ \ e_2\mapsto x_{\beta_r},\ \ f_1\mapsto x_{-\beta_k},\ \ f_2\mapsto x_{-\beta_r},\ \ h_1\mapsto \beta_k^\vee,\ \ h_2\mapsto \beta_r^\vee$$
  Hence the Weyl group of $\lie g'(B_{kr})$ is isomorphic to $W_{kr}$. In particular, $$\ell_{\{k,r\}}(ws_{\beta_i})> \ell_{\{k,r\}}(w),\ w\in W_{k\ell} \implies w(\beta_i)\in \Delta^+.$$
Now the same proof as in \cite[Theorem 5.4]{humphreys1992reflection} (the proof confirms also the cardinality two case first in the geometric setting) shows that the same statement holds for the length function
$\ell_\Sigma$ on $W_{\Sigma}$ with respect to the generators $\Sigma$, i.e., $$\ell_\Sigma(ws_{\beta_k})> \ell_\Sigma(w),   \ w\in W_{\Sigma}\Longrightarrow w(\beta_k)\in \sum_{i\in I}\mathbb{Z}_+\beta_i.$$ 
Now assume that $\beta\in W_{\Sigma}(\Sigma)$ and write $\beta=w(\beta_k)$ for some $k\in I$ and $w\in W_{\Sigma}$. If $\ell_\Sigma(ws_{\beta_k})> \ell_\Sigma(w)$ we have that $\beta=\sum_i c_i\beta_i$ for some $c_i\in\bz_+$ and otherwise $\ell_\Sigma(ws_{\beta_k})< \ell_\Sigma(w)$ which gives $\beta=-ws_{\beta_k}(\beta_k)=-\sum_i c_i\beta_i$ for some $c_i\in\mathbb{Z}_+$.
      \end{proof}
  \end{lem}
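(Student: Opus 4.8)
The plan is to handle the two parts by quite different means: part (1) follows formally from the minimality built into the definition of $\Pi(\Psi)$, while part (2) is a transfer of the standard Coxeter-theoretic sign-coherence of roots through a rank-two Kac--Moody embedding. For the first assertion of (1), I would argue by contradiction. Suppose $\alpha,\beta\in\Pi(\Psi)$ with $\gamma:=\alpha-\beta\in\Psi$; after possibly interchanging $\alpha$ and $\beta$ I may take $\gamma\in\Psi\cap\Delta^+$, whence $\gamma\in\Omega$ by choosing $w=1$ in the definition of $\Omega$. The relation $\alpha=\beta+\gamma$ then exhibits $\beta\preceq\alpha$: if $\alpha=2\beta$ this is already of the required form with positive leading coefficient, and otherwise $\gamma\in\Omega\setminus\{\alpha,\beta\}$, so $\alpha=1\cdot\beta+1\cdot\gamma$ fits the definition of $\preceq$. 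Minimality of $\alpha$ forces $\alpha=\beta$, hence $\gamma=0\notin\Psi$, a contradiction. This shows $\Pi(\Psi)$ is a $\pi$-system of $\Psi$.

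To upgrade this to a $\pi$-system of $\Delta$ when $\Psi$ is real closed, I would suppose $\alpha-\beta\in\Delta$ for distinct $\alpha,\beta\in\Pi(\Psi)$ and use the inequality $(\alpha,\beta)\le 0$ recorded just before the lemma. Since both roots are real, $(\alpha,\alpha)>0$ and $(\beta,\beta)>0$, so the computation $(\alpha-\beta,\alpha-\beta)=(\alpha,\alpha)+(\beta,\beta)-2(\alpha,\beta)>0$ forces $\alpha-\beta\in\Delta^{\mathrm{re}}$. A subroot system is symmetric (applying $s_\beta$ to $\beta$ gives $-\beta\in\Psi$), so $\alpha,-\beta\in\Psi$, and real closedness of $\Psi$ yields $\alpha-\beta=\alpha+(-\beta)\in\Psi$. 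This contradicts the $\pi$-system property of $\Pi(\Psi)$ in $\Psi$ just established, completing (1).

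For part (2) the core is the length-function criterion
\[ \ell_\Sigma(w s_{\beta_k})>\ell_\Sigma(w)\ \Longrightarrow\ w(\beta_k)\in\textstyle\sum_{i\in I}\bz_+\beta_i, \]
from which the dichotomy is immediate: writing $\beta=w(\beta_k)\in W_\Sigma(\Sigma)$, if the length increases then $\beta$ is a non-negative combination, while if it decreases I apply the criterion to $w s_{\beta_k}$ to get $-\beta=w s_{\beta_k}(\beta_k)\in\sum_i\bz_+\beta_i$, making $\beta$ a non-positive combination. To prove the criterion I would first settle the rank-two case. For a pair of distinct $\beta_k,\beta_r\in\Sigma$ (the degenerate proportional case $\beta_k=-\beta_r$ reduces $W_{kr}$ to a single reflection and is trivial, and otherwise the pair is linearly independent by the $\pi$-system condition), I form the generalized Cartan matrix $B_{kr}$ with off-diagonal entries $(\beta_k,\beta_r^\vee)$ and $(\beta_r,\beta_k^\vee)$ and invoke \cite{carbone2021varvec} to identify $\lie g'(B_{kr})$ with $\lie g_{kr}=\langle x_{\pm\beta_k},x_{\pm\beta_r}\rangle$, matching Chevalley generators and coroots. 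This identifies $W_{kr}$ with the Weyl group of $\lie g'(B_{kr})$, so the standard Kac--Moody fact $\ell(w s_i)>\ell(w)\Rightarrow w(\alpha_i)\in\Delta^+$ transfers to give $\ell_{\{k,r\}}(w s_{\beta_i})>\ell_{\{k,r\}}(w)\Rightarrow w(\beta_i)\in\Delta^+$ inside $W_{kr}$.

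The remaining step is to lift this pairwise statement to all of $W_\Sigma$, and this is where I expect the main obstacle to lie. I would run the argument of \cite{humphreys1992reflection} verbatim in the abstract Coxeter group $(W_\Sigma,\{s_{\beta_i}\})$, since that proof deduces global sign-coherence of $w(\beta_k)$ from exactly the rank-two input just obtained. The subtlety, foreshadowed by the earlier example, is that the geometric representation of $W_\Sigma$ and its associated root system differ sharply from the action on $\lie h^*$; the point to verify carefully is therefore that Humphreys' argument uses \emph{only} the combinatorial rank-two sign criterion together with the Coxeter structure of $W_\Sigma$, both of which are genuinely available here, and never any a priori coincidence of the two root systems. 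Once that bookkeeping is confirmed, the lift is formal and the dichotomy for $W_\Sigma(\Sigma)$ follows.
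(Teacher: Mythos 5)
Your proposal is correct and follows essentially the same route as the paper: the minimality argument via $\preceq$ for part (1), the upgrade to a $\pi$-system of $\Delta$ via $(\alpha,\beta)\le 0$ and real closedness, and for part (2) the rank-two Kac--Moody identification of $\lie g_{kr}$ with $\lie g'(B_{kr})$ followed by the transfer of Humphreys' Theorem 5.4 to $(W_\Sigma,\{s_{\beta_i}\})$. Your extra care in spelling out why $-\beta\in\Psi$ (so that real closedness applies to $\alpha+(-\beta)$) and in flagging the degenerate proportional pair $\beta_r=-\beta_k$ before asserting linear independence is welcome but does not change the argument.
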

 We end this subsection with an example of a $\pi$-system for a hyperbolic Kac-Moody algebra studied by Feingold and Nicolai in \cite{feingold2004subalgebras}. This example also shows that a $\pi$-system need not to be linearly independent.
 \begin{example}\label{FN2d}
     Let $A$ be the hyperbolic GCM $$A=\begin{pmatrix}
          2 & -1 & 0\\
          -1 & 2 & -2\\
          0 & -2 & 2
      \end{pmatrix}$$ 
with simple roots $\{\alpha_i:1\le i\le 4\}$ of $\lie g(A)$ and set $\Sigma=\{\gamma_i:1\le i\le 4\}$ where $$\gamma_1=\alpha_1+\alpha_2,\ \ \gamma_2=2\alpha_1+2\alpha_2+3\alpha_3,\ \ \gamma_3=2\alpha_2+3\alpha_3,\ \ \gamma_4=4\alpha_2+3\alpha_3.$$
      Then $\Sigma$ is a $\pi$-system of $\Delta$. The only non-trivial case is to show that $\gamma_2-\gamma_1\notin \Delta$; all other differences involve either different signs (e.g. $\gamma_4-\gamma_1$) or are multiples of real roots (e.g. $\gamma_2-\gamma_3$). 
      However, if $\gamma_2-\gamma_1\in\Delta$ we would have (root strings are unbroken)
$$\{\gamma_1,\gamma_1+\alpha_3,\gamma_1+2\alpha_3,\gamma_1+3\alpha_3\}\subseteq \mathbb{S}(\alpha_3,\gamma_1)$$
which is impossible by \eqref{thmmorita} since $\gamma_1,\gamma_1+2\alpha_3,\gamma_1+3\alpha_3\in\Delta^{\mathrm{re}}$ by \eqref{grklchar}.  Note that we have the relation $\gamma_4= 2\gamma_1-\gamma_2+2\gamma_3$.
  \end{example}
 
\subsection{} The following is one of the main results of this section.
\begin{prop}\label{uniqq}
    Let $\Psi\subseteq \Delta^{\mathrm{re}}$ be a subroot system of $\Delta.$ 
    Then there exists at most one $\pi$-system $\Sigma$ of $\Delta$ satisfying
    $$\Sigma\subseteq\Delta^+,\ \ W_\Sigma(\Sigma)=\Psi.$$
    
In particular, if $\Psi$ is a real closed subroot system, then $\Pi(\Psi)$ is the unique $\pi$-system of $\Delta$ with the above properties.  
\end{prop}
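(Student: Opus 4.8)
\emph{The plan.} Existence in the real closed case is immediate: when $\Psi$ is real closed, Lemma~\ref{baseisapisystem}(1) guarantees that $\Pi(\Psi)$ is a $\pi$-system of $\Delta$, it lies in $\Psi\cap\Delta^+\subseteq\Delta^+$, and $W_{\Pi(\Psi)}(\Pi(\Psi))=\Psi$ by the construction recalled above. So the entire content is the uniqueness assertion, and for that I would show that \emph{every} $\pi$-system $\Sigma$ of $\Delta$ with $\Sigma\subseteq\Delta^+$ and $W_\Sigma(\Sigma)=\Psi$ must coincide with the canonical set $\Pi(\Psi)$ of $\preceq$-minimal elements of $\Omega=\Psi\cap\Delta^+$. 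Since $\Pi(\Psi)$ depends only on $\Psi$ and the fixed positive system, identifying any admissible $\Sigma$ with $\Pi(\Psi)$ forces any two admissible $\pi$-systems to agree, which is exactly uniqueness (and then the real closed case pins down the common value as $\Pi(\Psi)$).

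First I would record three preliminaries. Since $\Sigma\subseteq W_\Sigma(\Sigma)=\Psi$ and $\Sigma\subseteq\Delta^+$, we have $\Sigma\subseteq\Omega$. Next, by Lemma~\ref{baseisapisystem}(2) every element of $\Psi$ is a non-negative or non-positive integral combination of $\Sigma$; as a non-positive combination of the positive roots in $\Sigma$ lies in $-\Delta^+$, the positive roots $\Omega$ are precisely the non-negative integral combinations of $\Sigma$ lying in $\Psi$. Finally, for distinct $\alpha,\beta\in\Sigma$ the defining condition $\alpha-\beta\notin\Delta$ forces $(\alpha,\beta)\le 0$: if $(\alpha,\beta)>0$ then $(\beta,\alpha^\vee)>0$, so the $\alpha$-string through $\beta$ contains $\beta-\alpha\in\Delta$, a contradiction.

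Then the two inclusions. For $\Pi(\Psi)\subseteq\Sigma$, take a $\preceq$-minimal $\beta\in\Omega$ and write $\beta=\sum_i c_i\beta_i$ with $\beta_i\in\Sigma$ and $c_i\in\bz_+$, using the second preliminary. If more than one generator occurred, or a single one occurred with coefficient $\ge 2$, then for any contributing $\beta_{i_0}\ne\beta$ this very expression (with $a=c_{i_0}>0$) exhibits $\beta_{i_0}\preceq\beta$ with $\beta_{i_0}\ne\beta$, contradicting minimality; hence $\beta=\beta_{i_0}\in\Sigma$. For the reverse inclusion $\Sigma\subseteq\Pi(\Psi)$, I would use the alternative description \eqref{altdes1} and check that each $\beta\in\Sigma$ satisfies $s_\beta(\Omega\setminus\{\beta\})=\Omega\setminus\{\beta\}$. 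The pairwise inequalities $(\alpha,\beta)\le 0$ make $(W_\Sigma,\{s_\beta:\beta\in\Sigma\})$ a Coxeter system with the length function $\ell_\Sigma$ produced in the proof of Lemma~\ref{baseisapisystem}(2), and the standard reflection argument then shows $s_\beta$ permutes the positive roots of $\Psi$ different from $\beta$, i.e.\ $\beta\in\Pi(\Psi)$.

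The main obstacle is this last inclusion, because $\Sigma$ need not be linearly independent (the family in Example~\ref{FN2d} already carries a relation), so one cannot certify positivity of $s_\beta(\gamma)$ by reading off a unique coefficient vector. The argument must therefore track positivity through the Coxeter length function $\ell_\Sigma$ rather than through coordinates in $\Sigma$, and verifying that $\ell_\Sigma$ is compatible with the global order $\preceq$ on $\Omega$ is the delicate point; the remaining steps are bookkeeping with Lemma~\ref{baseisapisystem} and the partial order $\preceq$.
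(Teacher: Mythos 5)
Your first inclusion $\Pi(\Psi)\subseteq\Sigma$ is exactly the paper's argument, and your preliminaries (in particular $(\alpha,\beta)\le 0$ for distinct $\alpha,\beta\in\Sigma$) all appear there as well. The gap is in the reverse inclusion $\Sigma\subseteq\Pi(\Psi)$: you correctly reduce it via \eqref{altdes1} to showing $s_\sigma(\gamma)\in\Delta^+$ for every $\gamma\in(\Psi\cap\Delta^+)\backslash\{\sigma\}$, but then you invoke ``the standard reflection argument'' for $(W_\Sigma,\{s_\beta:\beta\in\Sigma\})$. That argument --- a simple reflection permutes the positive roots other than itself --- is proved in the linearly independent setting by reading off a coefficient that $s_\sigma$ leaves unchanged, and this is exactly what is unavailable when $\Sigma$ is linearly dependent. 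You acknowledge this yourself (``verifying that $\ell_\Sigma$ is compatible with the global order $\preceq$ \dots is the delicate point'') without supplying the verification, so the heart of the proposition is missing from the proposal.

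The paper closes precisely this step by a direct computation with the bilinear form rather than through Coxeter combinatorics. Assuming $s_\sigma(\gamma)\in-\Delta^+$, it expands $s_\sigma(\gamma)=\gamma-(\gamma,\sigma^\vee)\sigma$ using $\gamma=\sum_i a_i\sigma_i$ and, by Lemma~\ref{baseisapisystem}(2), also writes $s_\sigma(\gamma)$ as a non-positive integral combination $\sum_j b_j\tau_j$ of $\Sigma$. Equating the two expressions yields, after a short case analysis, either a non-trivial relation $\sum_i a_i\mu_i=0$ with $a_i\in\bz_+$ and $\mu_i\in\Sigma$ --- impossible since $\Sigma\subseteq\Delta^+$ --- or an identity $a\mu'=\sum_i a_i\mu_i$ with $a>0$ and $\mu'$ distinct from the $\mu_i$, which contradicts
$$0<a(\mu',\mu')=\sum_i a_i(\mu_i,\mu')\le 0.$$
If you want to rescue your route, the missing lemma is that for elements of $W_\Sigma(\Sigma)$ the sign of the integral combination produced by Lemma~\ref{baseisapisystem}(2) agrees with the sign of the root in $\Delta$ (this is where $\Sigma\subseteq\Delta^+$ must enter) and that no positive abstract root other than the simple one lies over $\sigma$; writing this out is at least as much work as the paper's short contradiction, which avoids the geometric representation altogether.
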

\begin{proof} Assume that there exists such a $\pi$-system $\Sigma$ with the above mentioned properties. From Lemma~\ref{baseisapisystem} we have that each root in $\Psi$ can be written as a linear combination of roots in $\Sigma$ such that all the coefficients are either non-negative or non-positive integers. In particular, any element $\beta\in \Pi(\Psi)\subseteq \Psi\cap\Delta^+$ can be written as $\beta=\sum_{i=1}^k \sigma_i$ for some roots $\sigma_i\in \Sigma.$ Therefore we have $\beta\succeq \sigma_i$ for all $1\le i\le k$ which is only possible if $k=1$ and thus $\beta\in \Sigma$. So we have shown $\Pi(\Psi)\subseteq \Sigma$ and the remaining part shows the other inclusion. Let $\sigma\in \Sigma$ and $\gamma\in \Psi\cap\Delta^+$ such that $\gamma\neq \sigma$; so we can write $\gamma=\sum_{i=1}^n a_i\sigma_i$ for some $a_i\in \bz_+$ and $\sigma_i\in \Sigma.$ Without loss of generality assume that $\sigma_1=\sigma.$ Since $\Psi$ is a subroot system we must have $s_\sigma(\gamma)\in \Psi.$ In order to show $\sigma\in \Pi(\Psi)$ we will prove that $s_\sigma(\gamma)\in \Psi\cap \Delta^+$ (compare with \eqref{altdes1}). So assume in the rest of the proof that $s_\sigma(\gamma)\in -\Delta^+$ and again by Lemma~\ref{baseisapisystem} we can write 
\begin{equation}\label{funnyeq1}s_{\sigma}(\gamma)=\left( -a_1-\sum_{i=2}^n a_i(\sigma_i,\sigma_1^\vee)\right)\sigma_1+\sum_{i=2}^n a_i\sigma_i=b_1\tau_1+b_2\tau_2+\dots +b_k\tau_k.\end{equation}
for some non-positive integers $b_i$ and roots $\tau_i\in \Sigma$. Define for convenience $a_i'$ to be the coefficient of $\sigma_i$ in the above expression for $1\le i \le n.$ In particular $a_1'<0$ and $(\gamma,\sigma^{\vee})>0$. If $\sigma_1\notin \{\tau_i:1\le i\le k\},$ then we obtain from \eqref{funnyeq1} an expression of the form 
    $$(-a_1')\sigma_1=a_2'\sigma_2+\dots+a_n'\sigma_n+(-b_1)\tau_1+\dots+(-b_k)\tau_k.$$
If $\sigma_1\in  \{\tau_i:1\le i\le k\}$, say $\sigma_1=\tau_1$, and $b_1>a_1'$ then \eqref{funnyeq1} can be rewritten as
    $$(b_1-a_1')\sigma_1=a_2'\sigma_2+\dots+a_n'\sigma_n+(-b_2)\tau_2+\dots+(-b_k)\tau_k$$
    and if $b_1\leq a_1'$ \eqref{funnyeq1} can be rewritten as 
    $$(a_1'-b_1)\sigma_1+a_2'\sigma_2+\dots+a_n'\sigma_n+(-b_2)\tau_2+\dots+(-b_k)\tau_k=0.$$

In any case, we have one of the following two expressions:
\begin{equation}\label{funnyeq2}
    \sum_{i} a_i\mu_i=0\ \ \text{ for some } a_i\in \bz_+\text{ (not all zero) and } \mu_i\in \Sigma,
\end{equation}
or 
\begin{equation}\label{funnyeq3}
    a\mu'=\sum_{i} a_i\mu_i,\ \ \text{ for some } a>0, a_i\in \bz_+\text{ and distinct } \mu',\mu_i\in \Sigma.
\end{equation}

Since $\Sigma\subseteq \Delta^+,$ expressing each $\mu_i$ in \eqref{funnyeq2} as a non-negative linear combination of simple roots of $\lie g$  we get that $a_i=0$ which is a contradiction. Now from \eqref{funnyeq3} we have 
$$0<a(\mu',\mu')=\sum_i a_i(\mu_i,\mu')\le 0$$
which is again a contradiction, where the first inequality holds since $\mu'\in\Delta^{\mathrm{re}}$ and the second inequality follows from the fact that $\Sigma$ is a $\pi$-system and hence $(\alpha,\beta)\le 0$ for distinct $\alpha, \beta\in \Sigma.$ Consequently we have that $s_\sigma(\gamma)\in \Psi\cap\Delta^+$ and so $\sigma\in \Pi(\Psi).$ 
\end{proof}

\begin{rem} If $\Psi$ is not closed, then there might not exist such a $\pi$-system of $\Delta$ satisfying the properties of Proposition~\ref{uniqq}. For example, let $\Delta$ be the finite root system of type $G_2$ and let $\Psi$ be the subroot system consisting of the short roots of $\Delta.$ Then $\Psi$ is of type $A_2$ and there is no subset (except the single point subsets) of $\Psi$ which is a $\pi$-system of $\Delta$. 
\end{rem}
We discuss in the rest of this subsection also an example of an infinite $\pi$-system of $\Delta$. We first need to record the next lemma which generalizes the rank $2$ case in \cite[Theorem 1.1(i)]{carbone2021commutator}.
\begin{lem}\label{sumnotreal}
    If $a_{ij}\le -2$ for all $i\neq j,$ then $\alpha+\beta\notin \Delta^{\mathrm{re}}$ for any $\alpha,\beta\in \Delta^{\mathrm{re}}.$ 
    In particular, any subset $\Psi\subseteq \Delta^{\mathrm{re}}$ is real closed. 
    \end{lem}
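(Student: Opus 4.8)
The final statement (``any subset $\Psi\subseteq\Delta^{\mathrm{re}}$ is real closed'') is immediate from the first: if no sum of two real roots is ever a real root, then the implication defining real closedness is vacuous. So the plan is to prove $\alpha+\beta\notin\Delta^{\mathrm{re}}$ whenever $\alpha,\beta\in\Delta^{\mathrm{re}}$. I argue by contradiction, assuming $\gamma:=\alpha+\beta\in\Delta^{\mathrm{re}}$. The first observation is that the hypothesis $a_{ij}\le-2$ for all $i\ne j$ means there is no pair $i,j$ with $a_{ij}=-1$; hence by Morita's criterion \eqref{thmmorita} every real-root string is short, i.e. $|\mathbb{S}(\mu,\nu)\cap\Delta^{\mathrm{re}}|\le 2$ for all $\mu\in\Delta^{\mathrm{re}}$ and $\nu\in\Delta$. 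This short-string bound is the engine of the whole argument.

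Next I would exploit the $s_\alpha$-invariance of the string $\mathbb{S}(\alpha,\beta)$. It contains the two real roots $\beta$ and $\gamma=\beta+\alpha$; applying $s_\alpha$ produces two further real roots $s_\alpha\beta=\beta-(\beta,\alpha^\vee)\alpha$ and $s_\alpha\gamma=\beta-((\beta,\alpha^\vee)+1)\alpha$. These four real roots sit at the positions $\{0,1,-(\beta,\alpha^\vee),-(\beta,\alpha^\vee)-1\}$ along the $\alpha$-string, and the two consecutive pairs $\{0,1\}$ and $\{-(\beta,\alpha^\vee)-1,-(\beta,\alpha^\vee)\}$ coincide precisely when $(\beta,\alpha^\vee)=-1$; for any other value we obtain at least three real roots in $\mathbb{S}(\alpha,\beta)$, contradicting the short-string bound. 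Thus $(\beta,\alpha^\vee)=-1$, and symmetrically, using $\mathbb{S}(\beta,\alpha)$, also $(\alpha,\beta^\vee)=-1$. A direct computation then gives $(\alpha,\alpha)=(\beta,\beta)=(\gamma,\gamma)$, so $\{\pm\alpha,\pm\beta,\pm\gamma\}$ is a subsystem of type $A_2$. The remaining task, and the real obstacle, is to rule out such an $A_2$ subsystem.

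To this end I would first normalise so that one member of the $A_2$ is simple. Its base $\{\alpha,\beta\}$ is a $\pi$-system of $\Delta$ by Lemma~\ref{baseisapisystem}, so I may assume $\alpha,\beta,\gamma\in\Delta^+$; minimising $\mathrm{ht}(\gamma)$ over all $W$-translates of $\{\alpha,\beta,\gamma\}$ that keep these three positive and applying a height-lowering simple reflection $s_{\alpha_k}$ (which preserves positivity as long as no member equals $\alpha_k$) forces a member to be simple, hence $\alpha=\alpha_i$. Writing $\beta=\sum_j b_j\alpha_j\in\Delta^+$, I then have the three numerical inputs $(\beta,\alpha_i^\vee)=-1$, $(\beta,\beta)=(\alpha_i,\alpha_i)$, and $a_{ij}\le-2$. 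The pairing condition together with $a_{ij}\le-2$ gives $b_i\ge\sum_{k\ne i}b_k$ and, after rewriting through symmetrisability $d_ia_{ik}=d_ka_{ki}$, the bound $\sum_{k\ne i}b_kd_k\le d_i(b_i+\tfrac12)$; the length condition gives $(\beta',\beta')=2d_i(b_i^2+b_i+1)$ for $\beta':=\sum_{k\ne i}b_k\alpha_k$, while non-positivity of the off-diagonal products $(\alpha_k,\alpha_l)$ yields $(\beta',\beta')\le\sum_{k\ne i}b_k^2(\alpha_k,\alpha_k)$. Chaining these three estimates together with $b_k\le b_i$ collapses to $d_i(b_i+2)\le 0$, which is absurd since $d_i>0$ and $b_i\ge 0$.

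I expect this last paragraph to be the crux. The root-string part is robust and essentially combinatorial, but the $A_2$ case is genuinely two-dimensional and cannot be detected by root strings alone, since every string inside an $A_2$ is already short; it must therefore be excluded by the quantitative length-and-integrality estimate above. The reduction to a simple member and the precise form of the three inequalities are exactly where the hypothesis $a_{ij}\le-2$ is used decisively, and some care is needed to verify that the height-minimisation really produces a simple root without destroying the positivity of the whole triangle $\{\alpha,\beta,\gamma\}$.
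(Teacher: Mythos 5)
Your proof is correct, and it shares the paper's skeleton (Morita's criterion \eqref{thmmorita} to force $|\mathbb S(\alpha,\beta)\cap\Delta^{\mathrm{re}}|\le 2$, then a height-minimisation to make one root simple) but diverges in two places, both in the direction of being more self-contained. First, where the paper cites \cite[Proposition 1]{billig1995root} to pin the string down to $\{\beta,\beta+\alpha\}$, you instead use the $s_\alpha$-invariance of the string to place four real roots at positions $0,1,-(\beta,\alpha^\vee),-(\beta,\alpha^\vee)-1$ and conclude $(\beta,\alpha^\vee)=(\alpha,\beta^\vee)=-1$; this is equivalent information (the paper's string $\{\beta,\beta+\alpha\}$ has $p=0,q=1$, hence $(\beta,\alpha^\vee)=-1$) but obtained without the external reference. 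Second, and more substantially, where the paper finishes by invoking \cite[Proposition 5]{billig1995root} --- which produces a $j$ with $a_{ij}=-1$, contradicting $a_{ij}\le-2$ --- you rule out the resulting $A_2$ configuration by a direct estimate with the symmetrised form: writing $\alpha=\alpha_i$, $\beta=\sum_jb_j\alpha_j$, the pairing condition gives $2b_i-1\ge 2\sum_{k\ne i}b_k$ (so $b_k\le b_i$) and $\sum_{k\ne i}b_kd_k\le d_i(b_i+\tfrac12)$, the equal-length condition gives $(\beta',\beta')=2d_i(b_i^2+b_i+1)$, and non-positivity of the off-diagonal products gives $(\beta',\beta')\le\sum_{k\ne i}2d_kb_k^2$; chaining these yields $d_i(b_i+2)\le 0$, which I have checked is a genuine contradiction. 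Your route costs a longer computation and a by-hand verification of the reduction to a simple root (which the paper outsources to \cite[Proposition 2(ii)]{billig1995root}); what it buys is independence from Propositions 1 and 5 of Billig--Pianzola, so the only imported input is Morita's criterion. One small presentational caution: the appeal to Lemma~\ref{baseisapisystem} to arrange $\alpha,\beta,\gamma\in\Delta^+$ is unnecessary overhead --- a direct relabelling of the six roots $\pm\alpha,\pm\beta,\pm\gamma$ already exhibits two positive ones summing to the third positive one --- and you should note explicitly that the conditions $(\beta,\alpha^\vee)=-1$ and $(\beta,\beta)=(\alpha,\alpha)$ are preserved both by this relabelling and by the $W$-translation in the minimisation step.
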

    \begin{proof}
        If possible assume that $\alpha,\beta\in\Delta^{\mathrm{re}}$ are such that $\alpha+\beta\in\Delta^{\mathrm{re}}.$ By \eqref{thmmorita} we have  $|\mathbb{S}(\beta,\alpha)\cap\Delta^{\mathrm{re}}|= 2$ and \cite[Proposition 1]{billig1995root} implies $$\mathbb{S}(\beta,\alpha)=\{\beta,\beta+\alpha\},\ \ \ \mathbb{S}(\alpha,\beta)=\{\alpha,\alpha+\beta\}.$$
    Without loss of generality we can assume that $\alpha,\beta\in\Delta^+.$ By replacing, if necessary, the pair $\{\beta,\alpha\}$ by $\{w(\beta),w(\alpha)\}$ for $w\in W$ we can suppose from the beginning that $\{\beta,\alpha\}$ satisfies
    $$\mathrm{ht}(\alpha+\beta)\le \mathrm{ht}(w(\alpha+\beta)),\ \ \forall w\in \{w\in W: w(\alpha),w(\beta)\in\Delta^+\}.$$ By \cite[Proposition 2(ii)]{billig1995root} it must be of the form $\{\beta,\alpha_i\}$ for some simple root $\alpha_i.$ Since 
    $a_{ij}\leq -2$ for all $i,j\in I$ the root $\beta$ cannot be simple, since otherwise $\beta+2\alpha_i\in \mathbb{S}(\beta,\alpha)$. Now by \cite[Proposition 5]{billig1995root} there exists a unique $j\neq i$ such that $(\beta,\alpha_j^\vee)>0$ and $a_{i,j}=-1$ which is impossible. This completes the proof. 
    \end{proof} 
    \begin{example} 
    Let $A=(a_{ij})$ be a $3\times 3$ GCM given by $$a_{ij}=\begin{cases}
        2 & \ \text{if } i=j\\
        -2 & \ \text{if } i\neq j
        \end{cases}$$
        and let $\{\alpha_1,\alpha_2,\alpha_3\}$ be a simple system for its Kac-Moody algebra $\lie g$.  For $k\in \bz$ let 
      $$\gamma_k=2k(2k+1)\alpha_1+2k(2k-1)\alpha_2+\alpha_3=(s_1s_2)^{k}(\alpha_3)\in \Delta^{+}\cap \Delta^{\mathrm{re}}$$
      and set 
      $$\Sigma:=\{\gamma_k:k\in\bz\},\ \ \Psi:=W_{\Sigma}(\Sigma)$$ 
      It is straightforward to check that $\Psi$ is a subroot system consisting of real roots and hence it is a real closed subroot system by Lemma~\ref{sumnotreal}. We will show in the remaining part that $\Sigma$ is a $\pi$-system of $\Delta$ and hence $\Sigma=\Pi(\Psi)$ (see Proposition~\ref{uniqq}) is neither finite nor linearly independent. It is easy to see that $(\gamma_k,\gamma_\ell^\vee)=2-16(k-\ell)^2<-1$ if $k\neq \ell$. In particular, $\mathbb{S}(\gamma_\ell,\gamma_k)$ contains at least two real roots namely $\gamma_k$ and $s_{\gamma_\ell}(\gamma_k).$ By Equation \ref{thmmorita} we have that $|\mathbb{S}(\gamma_\ell,\gamma_k)\cap\Delta^{\mathrm{re}}|=2.$ Since the leftmost and rightmost roots of a root string are real roots (see \cite[Proposition 1]{billig1995root}) it follows that $\gamma_k-\gamma_\ell\notin \Delta.$ 
      \end{example}
     
\subsection{} Given a subset $S\subseteq \Delta^{\mathrm{re}}$, we define
\begin{equation}\label{subal1}\lie g(S)=\langle \lie g_{\alpha}: \alpha\in \pm S\rangle\end{equation} and call it a \textit{root generated} subalgebra. Since the derived algebra of $\lie g$ is itself root generated, it is natural to ask the converse whether all root generated subalgebras are of Kac-Moody type. First we note that $\lie g(S)$ is in fact a symmetric regular subalgebra. To see this, we simply have to write any root vector in $\lie g(S)$ as a sum of Lie words in root vectors corresponding to real roots and apply the Chevalley involution. Hence all results obtained so far can be applied to them. Part $(3)$ of the next proposition is crucial and generalizes 
\cite[Proposition 4.1]{carbone2021commutator} (rank $2$ case) and 
\cite[Corollary 11.1.5]{roy2019maximal} (affine case).
\begin{prop}\label{corgen34}\begin{enumerate}
    \item For any real closed subroot system $\Psi$ we have $$\lie g(\Psi)=\lie g(\Pi(\Psi)).$$ 
    
    \item Let $\Sigma$ be a $\pi$-system of $\Delta$. Then $\Delta(\lie g(\Sigma))\cap \Delta^{\pm}$ is contained in the $\mathbb{Z}_{\pm}$-span of $\Sigma$. \vspace{0,15cm}
    
    \item For any real closed subroot system $\Psi$ we have $$\Psi=\Delta(\lie g(\Psi))^{\mathrm{re}}.$$
\end{enumerate}
\begin{proof} \begin{enumerate}[leftmargin=*]
    \item Since $\Pi(\Psi)\subseteq \Psi$ we only have to show that $\mathfrak{g}_\beta\subseteq \lie g(\Pi(\Psi))$ for all $\beta\in \Psi$. Given $\beta\in \Psi$ we choose $w\in W_{\Pi(\Psi)}$ such that $\beta=w\alpha$ for some $\alpha\in \Pi(\Psi).$ As in the proof of Lemma~\ref{baseisapisystem} we denote by $\ell_{\Pi(\Psi)}$ the length function on $W_{\Pi(\Psi)}$ with respect to $\Pi(\Psi).$ We shall prove by induction on $\ell_{\Pi(\Psi)}(w)$ that
$\mathfrak{g}_\beta\subseteq \lie g(\Pi(\Psi))$ and
\begin{equation}\label{liebracket}
    x_\beta= [x_{\gamma_1},[x_{\gamma_2},[\dots,[x_{\gamma_{k-1}},x_{\gamma_k}]]]] \ \ \text{ for some }\gamma_i\in \pm \Pi(\Psi).
\end{equation} 
If $\ell_{\Pi(\Psi)}(w)=0,$ there is nothing to show. So let $\gamma\in \Pi(\Psi)$ be such that $\ell_{\Pi(\Psi)}(s_\gamma w)<\ell_{\Pi(\Psi)}(w).$ By induction hypothesis we have that $\beta':=s_\gamma w(\alpha)\in \Delta(\lie g(\Pi(\Psi)))$ and $x_{\beta'}$ is of the form \ref{liebracket}. Since $\lie g(\Pi(\Psi))$ is a regular symmetric subalgebra of $\lie g$ and $\gamma,\beta'\in \Delta(\lie g(\Pi(\Psi)))^{\mathrm{re}}$, we can apply Proposition \ref{keyprop} to get $\beta\in \mathbb{S}(\gamma,\beta')\subseteq \Delta(\lie g(\Pi(\Psi)))$. The fact that $x_{\beta}$ is a Lie word in $\{x_{\gamma}: \gamma\in \pm\Sigma\}$ follows exactly in the same way as the proof of Proposition~\ref{keyprop}(2) by considering the module $M^{\lie g(\Pi(\Psi))}(\gamma,\beta')$. 
\item For the second part, using the Jacobi identity, we first note that the right normed Lie words in $\{x_{\gamma}: \gamma\in \Sigma\cup -\Sigma\}$ span $\lie g(\Sigma).$ Since 
$\Sigma$ is a $\pi$-system, we have $[x_\gamma, x_{-\gamma'}]=\delta_{\gamma, \gamma'}h_\gamma$ for $\gamma,\gamma'\in\Sigma$. Using this, a straightforward induction on the length of a Lie word shows that 
the root space $\lie g(\Sigma)_\beta$ for $\beta\in  \Delta(\lie g(\Sigma))\cap\Delta^{\pm}$ is spanned by the right normed Lie words 
$$[x_{\gamma_1},[x_{\gamma_2},[\dots,[x_{\gamma_{k-1}},x_{\gamma_k}]]]],\ \ \ \gamma_1,\dots,\gamma_k\in \pm \Sigma.$$
\item Define $\Psi':=\Delta(\lie g(\Psi))\cap \Delta^{\mathrm{re}}$ which is a real closed subroot system by Lemma~\ref{keyprop} and note that by part (1) we have $$\Pi(\Psi')\subseteq \Psi'=\Delta(\lie g(\Pi(\Psi)))\cap \Delta^{\mathrm{re}}\implies \Pi(\Psi')\subseteq \Delta(\lie g(\Pi(\Psi)))\cap \Delta^{+}.$$
Hence each element in $\Pi(\Psi')$ is in the $\mathbb{Z}_+$-span of $\Pi(\Psi)$ by part (2). However, writing $\beta=a_1\gamma_1+\cdots+a_k\gamma_k$ for some $\gamma_i\in \Pi(\Psi)\subseteq \Psi\subseteq\Psi'$ and $a_1,\dots,a_k\in\mathbb{Z}_+$ we can use the minimality of the elements in $\Pi(\Psi')$ to obtain $\beta=\gamma_i$ for some $i\in\{1,\dots,k\}$. This gives $\Pi(\Psi')\subseteq \Pi(\Psi)$. Therefore 
$$\Psi'= W_{\Pi(\Psi')}(\Pi(\Psi'))\subseteq W_{\Pi(\Psi)}(\Pi(\Psi))=\Psi.$$
\end{enumerate}
\end{proof}
\end{prop}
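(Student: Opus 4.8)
The plan is to prove the three parts in the stated order, since part (3) is a formal consequence of (1) and (2). For part (1), one inclusion is free: as $\Pi(\Psi)\subseteq\Psi$ we have $\lie g(\Pi(\Psi))\subseteq\lie g(\Psi)$. For the reverse I would show $\lie g_\beta\subseteq\lie g(\Pi(\Psi))$ for every $\beta\in\Psi$; since real root spaces are one-dimensional this is equivalent to $\beta\in\Delta(\lie g(\Pi(\Psi)))$. Using $W_{\Pi(\Psi)}(\Pi(\Psi))=\Psi$ I write $\beta=w(\alpha)$ with $\alpha\in\Pi(\Psi)$ and induct on the length $\ell_{\Pi(\Psi)}(w)$ (the length function introduced in the proof of Lemma~\ref{baseisapisystem}). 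Choosing $\gamma\in\Pi(\Psi)$ with $\ell_{\Pi(\Psi)}(s_\gamma w)<\ell_{\Pi(\Psi)}(w)$, the inductive hypothesis places $\beta':=s_\gamma w(\alpha)$ in $\Delta(\lie g(\Pi(\Psi)))^{\mathrm{re}}$; since $\lie g(\Pi(\Psi))$ is a symmetric regular subalgebra and $\gamma$ also lies in $\Delta(\lie g(\Pi(\Psi)))^{\mathrm{re}}$, Proposition~\ref{keyprop}(3) gives $\mathbb{S}(\gamma,\beta')\subseteq\Delta(\lie g(\Pi(\Psi)))$, and $\beta=s_\gamma(\beta')$ lies on this string. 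It is convenient to carry along the stronger assertion that $x_\beta$ is a right-normed Lie word in $\{x_{\pm\gamma}:\gamma\in\Pi(\Psi)\}$, extracted from the $\mathfrak{sl}(\gamma)$-module $M^{\lie g(\Pi(\Psi))}(\gamma,\beta')$ exactly as in Proposition~\ref{keyprop}(2); this is what part (2) will use.

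For part (2) I would begin with the observation that, by the Jacobi identity, $\lie g(\Sigma)$ is spanned by right-normed Lie words $[x_{\gamma_1},[x_{\gamma_2},[\dots,x_{\gamma_k}]\dots]]$ with $\gamma_i\in\pm\Sigma$. The decisive input is the $\pi$-system relation: for $\gamma,\gamma'\in\Sigma$ one has $\gamma-\gamma'\notin\Delta$, hence $[x_\gamma,x_{-\gamma'}]=\delta_{\gamma,\gamma'}h_\gamma$. I would then induct on word length, using Jacobi to move an oppositely-signed generator inward until it meets a generator of the opposite sign, at which point the adjacency collapses either to zero or to a Cartan element of weight $0$, modulo words of strictly shorter length. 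A surviving word therefore has all generators of a single sign, so the weight $\beta$ of any nonzero such word of weight in $\Delta^{\pm}$ is a $\mathbb{Z}_+$- (resp.\ $\mathbb{Z}_-$-) combination of $\Sigma$, which is the claim.

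For part (3) I would set $\Psi':=\Delta(\lie g(\Psi))^{\mathrm{re}}$, a real closed subroot system by Proposition~\ref{keyprop}, and note that $\Psi\subseteq\Psi'$ is immediate. By part (1) we have $\lie g(\Psi)=\lie g(\Pi(\Psi))$, so part (2) applied with $\Sigma=\Pi(\Psi)$ shows that every $\beta\in\Pi(\Psi')\subseteq\Psi'\cap\Delta^+$ lies in the $\mathbb{Z}_+$-span of $\Pi(\Psi)$. Writing $\beta=\sum_i a_i\gamma_i$ with $\gamma_i\in\Pi(\Psi)\subseteq\Psi'$ and $a_i\in\mathbb{Z}_+$ forces $\gamma_i\preceq\beta$ for each contributing index (since the remaining $\gamma_j$ lie in the associated $\Omega$), so minimality of the elements of $\Pi(\Psi')$ collapses the sum to $\beta=\gamma_i$. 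This yields $\Pi(\Psi')\subseteq\Pi(\Psi)$ and hence $\Psi'=W_{\Pi(\Psi')}(\Pi(\Psi'))\subseteq W_{\Pi(\Psi)}(\Pi(\Psi))=\Psi$.

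The \emph{main obstacle} is the reverse inclusion $\Delta(\lie g(\Psi))^{\mathrm{re}}\subseteq\Psi$ of part (3): a priori $\lie g(\Psi)$ could acquire real roots lying outside $\Psi$, and ruling this out is precisely what part (2) is engineered to do. Consequently the genuine technical heart is the sign-collapsing induction of part (2), together with the $\pi$-system identity $[x_\gamma,x_{-\gamma'}]=\delta_{\gamma,\gamma'}h_\gamma$ that drives it; the bookkeeping of keeping each rearrangement within the span of shorter words is where the care is needed. Once part (2) is established, part (3) follows formally from parts (1), (2), and the minimality built into $\Pi(\Psi)$.
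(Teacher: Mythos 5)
Your proposal is correct and follows essentially the same route as the paper: the same length induction on $W_{\Pi(\Psi)}$ combined with Proposition~\ref{keyprop} for part (1), the same right-normed-word spanning argument driven by the $\pi$-system identity $[x_\gamma,x_{-\gamma'}]=\delta_{\gamma,\gamma'}h_\gamma$ for part (2), and the same minimality argument for $\Pi(\Psi')\subseteq\Pi(\Psi)$ in part (3). The only difference is that you spell out the sign-collapsing induction of part (2) slightly more explicitly than the paper does.
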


\section{Root generated subalgebras are of Kac-Moody type}\label{section4}
In this section we want to describe the class of root generated subalgebras of the form $\lie g(\Psi)$ for real closed subroot systems $\Psi$. The main result is stated in Theorem~\ref{mainresbij}.
\subsection{} We shall recall the construction of the algebra $\lie{g}'(A)$ from \cite[Remark 1.5]{kac1990infinite} for a possibly infinite generalized Cartan matrix $A$ and derive some of the properties of the structure of $\lie{g}'(A).$ Let $I$ be the index set of $A$ and denote by $\tilde{\lie{g}}'(A)$ the Lie algebra generated by $e_i,f_i,\alpha_i^\vee,\ i\in I$ with relations $$[\alpha_i^\vee,\alpha_j^\vee], \ \,[e_i,f_j]-\delta_{ij}\alpha_i^\vee,\ \ [\alpha_i^\vee,e_j]-a_{ij}e_j, \ \ [\alpha_i^\vee,f_j]+a_{ij}f_j.$$ Let $Q$ be the free abelian group generated by $\alpha_i,i\in I$ and note that $\tilde{\lie g}'(A)$ admits a $Q$-grading with
    $$\deg(e_i)=\alpha_i,\ \deg(f_i)=-\alpha_i,\ \deg(\alpha_i^{\vee})=0.$$ There exists a unique maximal $Q$-graded ideal $\lie i$ of $\Tilde{\lie g}'(A)$ which intersects $\lie{h}'(A):=\sum_{i\in I} \bc\alpha_i^\vee$ trivially and set  $$\lie{g}'(A):=\tilde{\lie g}'(A)/ \lie{i}.$$ 
\begin{rem}
If the matrix $A$ is finite, then the above constructed Lie algebra $\lie g'(A)$ is simply the derived algebra of $\lie g(A)$ and the notation is consistent with the previous sections.
\end{rem}

The next lemma is proven similarly as \cite[Proposition 1.6/1.7]{kac1990infinite}. 
    \begin{lem}\label{lemidealkacmoody}
        Let $A$ be a possibly infinite generalized Cartan matrix.
        \begin{enumerate}
            \item  The centre $\lie c'(A)$ of $\lie g'(A)$ is given by $$\lie c'(A)=\{h\in \lie h'(A): \alpha_i(h)=0\ \ \forall i\in I\}.$$
            \item If $A$ is indecomposable, then any proper $Q$-graded ideal $\lie i$ of $\lie{g}'(A)$ is contained in the centre $\lie c'(A).$
             \vspace{0,2cm}
         \item Let $A=\bigoplus A_k$ be a decomposition of $A$ into indecomposable generalied Cartan  matrices. Then for any ideal (resp. $Q$-graded ideal) $\lie{i}$ of $\lie{g}'(A)$, we have  $\lie{i}=\bigoplus \lie{i}_k$ for some ideals (resp. $Q$-graded ideals) $\lie{i}_k$ of $\lie{g}'(A_k)$. 
        \end{enumerate} \qed
    \end{lem}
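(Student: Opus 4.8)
The plan is to follow the classical arguments of Kac for the finite-rank case \cite[Proposition 1.6/1.7]{kac1990infinite}, the only new feature being that $I$ may be infinite; this causes no real difficulty, since every element of $\tilde{\lie g}'(A)$ is a finite sum of $Q$-homogeneous components and every homogeneous component is a finite Lie word, so only finitely many indices are ever involved at once. First I would record the structural input used throughout: the triangular decomposition $\lie g'(A)=\lie n_-\oplus\lie h'(A)\oplus\lie n_+$ together with the $Q$-grading, and the fact that $\lie g'(A)$ contains no nonzero $Q$-graded ideal meeting $\lie h'(A)$ trivially. The latter follows from the construction: if $\lie j\subseteq\lie g'(A)$ were such an ideal, its preimage in $\tilde{\lie g}'(A)$ would be a $Q$-graded ideal properly containing $\lie i$ and still meeting $\lie h'(A)$ trivially (here one uses that the quotient map is injective on $\lie h'(A)$), contradicting the maximality of $\lie i$.

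For part (1), note first that $\lie c'(A)$ is $Q$-graded, since in a $Q$-graded Lie algebra the homogeneous components of a central element are again central. A homogeneous central element of degree $\beta\neq 0$ would span a one-dimensional $Q$-graded ideal meeting $\lie h'(A)$ trivially, which is excluded by the structural input above; hence $\lie c'(A)\subseteq\lie h'(A)$. Within $\lie h'(A)$ the relation $[h,e_i]=\alpha_i(h)e_i$ shows that $h$ is central if and only if $\alpha_i(h)=0$ for all $i\in I$, giving the asserted description.

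For part (2), let $\lie i$ be a $Q$-graded ideal with $\lie i\not\subseteq\lie c'(A)$; I would show $\lie i=\lie g'(A)$. The first step is to produce some generator $e_i\in\lie i$: if $\lie i\cap\lie h'(A)$ contains an $h$ with $\alpha_i(h)\neq 0$, then $[h,e_i]=\alpha_i(h)e_i\in\lie i$ gives it directly; otherwise $\lie i$ has a nonzero homogeneous component in some degree $\beta\neq 0$, and choosing one of minimal height I would bracket down by the $f_i$, observing that if no $\operatorname{ad}f_i$ lowered it the element would generate a $Q$-graded ideal inside $\lie n_+$ meeting $\lie h'(A)$ trivially, again impossible, so the minimal element has height one and is a multiple of some $e_i$. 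The second step is propagation: from $e_i\in\lie i$ we get $\alpha_i^\vee=[e_i,f_i]\in\lie i$, hence $a_{ij}e_j=[\alpha_i^\vee,e_j]\in\lie i$ for every $j$ adjacent to $i$, and then $f_j$ likewise; by indecomposability any two indices are joined by a finite path in the diagram, so all $e_j,f_j,\alpha_j^\vee$ lie in $\lie i$ and $\lie i=\lie g'(A)$.

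For part (3), the relations $[e_i,f_j]=\delta_{ij}\alpha_i^\vee$, $[\alpha_i^\vee,e_j]=a_{ij}e_j$ and the Serre relations show that generators from different blocks commute (as $a_{ij}=0$ across blocks), so $\lie g'(A)=\bigoplus_k\lie g'(A_k)$ as a direct sum of commuting ideals, with $Q=\bigoplus_k Q_k$. Given a $Q$-graded ideal $\lie i$ I would set $\lie i_k:=\lie i\cap\lie g'(A_k)$ and decompose $\lie i$ into homogeneous components; every component of nonzero degree $\beta$ lies in the unique block $\lie g'(A_k)$ with $\beta\in Q_k$, since the corresponding root space of $\lie g'(A)$ is that of a single block. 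For a general (not a priori graded) ideal I would first reduce to this case using that $\operatorname{ad}\lie h'(A)$ acts diagonalizably, so any ideal is a sum of $\lie h'(A)$-weight spaces. The step I expect to be the main obstacle is the degree-zero component $\lie i\cap\lie h'(A)\subseteq\bigoplus_k\lie h'(A_k)$, which one must show splits along the blocks as $\bigoplus_k\bigl(\lie i\cap\lie h'(A_k)\bigr)$; the natural route is to separate the block components of an element $h\in\lie i\cap\lie h'(A)$ by bracketing with suitable $e_i$ and $f_i$, and the genuinely delicate case is that of central elements, where no such bracketing is available and one must argue directly using the splitting $\lie c'(A)=\bigoplus_k\lie c'(A_k)$ obtained from part (1).
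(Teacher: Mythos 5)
Your parts (1) and (2) are correct and are essentially the proof the paper has in mind (the paper offers no argument of its own beyond pointing to Kac's Propositions 1.6 and 1.7): the structural fact that $\lie g'(A)$ contains no nonzero $Q$-graded ideal meeting $\lie h'(A)$ trivially, the reduction of central elements to degree zero, and the ``bracket down to a generator, then propagate along the connected diagram'' argument all survive the passage to infinite $I$ for exactly the reason you give, namely that any element involves only finitely many indices. The only small omission in part (2) is the case where every nonzero-degree component of $\lie i$ is negative; there the symmetric argument with the $e_i$ produces some $f_i\in\lie i$, whence $\alpha_i^\vee=[e_i,f_i]\in\lie i$ and $e_i=\tfrac12[\alpha_i^\vee,e_i]\in\lie i$.

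For part (3), however, the step you single out as ``the genuinely delicate case'' --- the degree-zero, central component of $\lie i$ --- is not merely delicate: it is where the assertion, as stated, is false, so the route you sketch cannot be completed. Take $A=A_1\oplus A_2$ with both blocks of affine type, so that each $\lie g'(A_k)$ has one-dimensional centre $\bc c_k$. Then $\bc(c_1+c_2)$ is central, hence a $Q$-graded ideal of $\lie g'(A)=\lie g'(A_1)\oplus\lie g'(A_2)$, yet it is not of the form $\lie i_1\oplus\lie i_2$ with $\lie i_k\subseteq\lie g'(A_k)$, since $c_1+c_2$ lies in neither summand while its intersections with the summands are zero. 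Knowing $\lie c'(A)=\bigoplus_k\lie c'(A_k)$ from part (1) does not help: an arbitrary subspace of a direct sum need not split along the summands, and every subspace of the centre is automatically a $Q$-graded ideal, so there is no bracketing left to separate the block components. This situation is not artificial --- it is exactly what occurs for $\ker(\varphi)$ in \eqref{mapphi} in the paper's own affine setting when $\mathring{\Psi}$ has $s\ge 2$ components with all $k_i>0$: the canonical central elements of the $s$ affine blocks all map to multiples of the single element $c$, so $\ker(\varphi)$ contains an $(s-1)$-dimensional subspace of $\bigoplus_k\bc c_k$ that meets each $\bc c_k$ trivially. What you can prove, and what suffices for Theorem~\ref{KMtypereg}, is the weaker containment $\lie i\subseteq\bigoplus_k p_k(\lie i)$, where $p_k$ denotes the projection onto $\lie g'(A_k)$ and each $p_k(\lie i)$ is a ($Q$-graded) ideal of $\lie g'(A_k)$; one then checks that $p_k(\ker\varphi)$ is proper (if $\alpha_i^\vee\in p_k(\ker\varphi)$ for $i\in I_k$, graded-ness puts an element $\alpha_i^\vee+y$ with $y$ in the other Cartans into $\ker\varphi$, forcing $2e_i\in\ker\varphi$ and $x_{\beta_i}=0$, a contradiction), and part (2) still yields $\ker(\varphi)\subseteq\bigoplus_k\lie c'(B_{\Sigma,k})=\lie c'(B_{\Sigma})$. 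You should either prove this corrected version or restrict part (3) to the statement actually needed.
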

 We need the following definition in order to state our main theorem.
\begin{defn}\label{defnKMtype}
      A subalgebra $\lie a$ of a Kac-Moody Lie algebra $\lie g$ is called of \textit{Kac-Moody type} if there exists a symmetrizable (possibly infinite) generalized Cartan matrix $A$ and a short exact sequence of Lie algebra homomorphisms
        $$0\longrightarrow \mathrm{ker}(\varphi)\longrightarrow \lie g'(A)\stackrel{\varphi}{\longrightarrow} \lie a\longrightarrow 0$$
such that $\mathrm{ker}(\varphi)\subseteq \lie c'(A)$.
    \end{defn}
Our main theorem whose proof will be given in the rest of this section is the following.  The second correspondence for affine Lie algebras has been proved in \cite{roy2019maximal}. 
    \begin{thm}\label{mainresbij}
     Let $\lie g$ be a Kac-Moody algebra. We have the following bijections \vspace{10pt}
    $$\begin{array}{cccccr}
     \vspace{10pt}\left\{\begin{array}{c}
            \pi-\text{systems of }\Delta  \\
             \text{contained in } \Delta^+
       \end{array}\right\}  & \longleftrightarrow & \left\{\begin{array}{c}
            \text{real closed subroot}  \\
             \text{systems of } \Delta
       \end{array}\right\}& \longleftrightarrow & \left\{\begin{array}{c}
            \text{root generated}\\
             \text{subalgebras of $\lie g$} 
       \end{array}\right\} \\ \vspace{10pt}
        \Sigma  & \longmapsto & W_\Sigma(\Sigma)\\
    \Pi(\Psi) & \longmapsfrom &\Psi & \longmapsto& \lie g(\Psi)&\\ \\
    &&\Delta(\lie g(S))^{\mathrm{re}}& \longmapsfrom &\lie g(S)&
    \end{array}$$
    
    Moreover, any root generated subalgebra is of Kac-Moody type. 
    \end{thm}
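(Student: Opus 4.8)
The plan is to verify the two correspondences separately and to treat the Kac-Moody type assertion last. The bijection between real closed subroot systems and root generated subalgebras is almost immediate from what is already proved: $\Psi\mapsto\lie{g}(\Psi)$ has image a root generated subalgebra by definition, $\lie{g}(S)\mapsto\Delta(\lie{g}(S))^{\mathrm{re}}$ has image a real closed subroot system by Proposition~\ref{keyprop}, and $\Delta(\lie{g}(\Psi))^{\mathrm{re}}=\Psi$ is Proposition~\ref{corgen34}(3). For the reverse composite, put $\Psi:=\Delta(\lie{g}(S))^{\mathrm{re}}\supseteq S$; each $\alpha\in\Psi$ is real, so $\dim\lie{g}_\alpha=1$ and $\lie{g}_\alpha\cap\lie{g}(S)\neq0$ force $\lie{g}_\alpha\subseteq\lie{g}(S)$, whence $\lie{g}(\Psi)\subseteq\lie{g}(S)$, while $S\subseteq\Psi$ gives the reverse inclusion.

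For the bijection between $\pi$-systems contained in $\Delta^+$ and real closed subroot systems, one direction is clean: $\Pi(\Psi)$ is such a $\pi$-system by Lemma~\ref{baseisapisystem}(1) and $W_{\Pi(\Psi)}(\Pi(\Psi))=\Psi$, so $\Psi\mapsto\Pi(\Psi)\mapsto W_{\Pi(\Psi)}(\Pi(\Psi))$ is the identity. The whole content lies in the other composite, which I would reduce to the single claim that $\Psi:=W_\Sigma(\Sigma)$ is real closed for every $\pi$-system $\Sigma\subseteq\Delta^+$; granting this, $\Sigma$ and $\Pi(\Psi)$ are both $\pi$-systems in $\Delta^+$ with the same Weyl orbit, so Proposition~\ref{uniqq} forces $\Sigma=\Pi(\Psi)$.

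Proving that $W_\Sigma(\Sigma)$ is real closed is the main obstacle, and I would do it by sandwiching it inside $\Psi^*:=\Delta(\lie{g}(\Sigma))^{\mathrm{re}}$. By Proposition~\ref{keyprop} the set $\Psi^*$ is real closed, and since $\Sigma\subseteq\Psi^*$ and $\Psi^*$ is a subroot system we get $W_\Sigma(\Sigma)\subseteq\Psi^*$. For the reverse inclusion I would show $\Pi(\Psi^*)\subseteq\Sigma$: any $\delta\in\Pi(\Psi^*)$ lies in $\Delta(\lie{g}(\Sigma))\cap\Delta^+$, hence in the $\mathbb{Z}_+$-span of $\Sigma$ by Proposition~\ref{corgen34}(2); as the elements of $\Sigma$ lie in $\Psi^*\cap\Delta^+$, the minimality defining $\Pi(\Psi^*)$ (exactly as in the proof of Proposition~\ref{uniqq}) forces such a $\delta$ to be a single element of $\Sigma$. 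Then $\Psi^*=W_{\Pi(\Psi^*)}(\Pi(\Psi^*))\subseteq W_\Sigma(\Sigma)=\Psi$, so $\Psi=\Psi^*$ is real closed.

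Finally, by Proposition~\ref{corgen34}(1) and the bijections above every root generated subalgebra equals $\lie{g}(\Sigma)$ for $\Sigma=\{\gamma_i\}_{i\in I}=\Pi(\Psi)$, so it suffices to realise this as a quotient of $\lie{g}'(A)$ for the symmetrizable matrix $A=(a_{ij})$, $a_{ij}=(\gamma_j,\gamma_i^\vee)$. Writing $\pi$ for the root-lattice map $\alpha_i\mapsto\gamma_i$, I would define $\varphi\colon\lie{g}'(A)\to\lie{g}(\Sigma)$ by $e_i\mapsto x_{\gamma_i}$, $f_i\mapsto x_{-\gamma_i}$, $\alpha_i^\vee\mapsto\gamma_i^\vee$; the $\pi$-system property gives $[x_{\gamma_i},x_{-\gamma_j}]=\delta_{ij}\gamma_i^\vee$ (as $\gamma_i-\gamma_j\notin\Delta$ for $i\neq j$) and the Serre relations $(\mathrm{ad}\,x_{\gamma_i})^{1-a_{ij}}x_{\gamma_j}=0$ (the $\gamma_i$-string through $\gamma_j$ stops at $\gamma_j-a_{ij}\gamma_i$), so $\varphi$ descends to $\lie{g}'(A)$ using that $A$ is symmetrizable, and is surjective since the $x_{\pm\gamma_i}$ generate $\lie{g}(\Sigma)$. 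The hard step is $\ker\varphi\subseteq\lie{c}'(A)$: reducing to indecomposable $A$ by Lemma~\ref{lemidealkacmoody}(3), splitting $\ker\varphi$ along the triangular decomposition inherited through $\varphi$, computing its Cartan part to lie in $\lie{c}'(A)$ (a relation $\sum_i c_i\gamma_i^\vee=0$ forces $\sum_i c_ia_{ij}=0$ for all $j$), and applying Lemma~\ref{lemidealkacmoody}(2) to the rest once the kernel is known to be $Q$-graded. I expect the genuine difficulty to be this last $Q$-gradedness precisely when $\Sigma$ is linearly dependent (Example~\ref{FN2d}): one must rule out any collapse of distinct root spaces of $\lie{g}'(A)$ under $\varphi$, which I would do using that $\Sigma\subseteq\Delta^+$ forbids positive relations among the $\gamma_i$ and that $\pi$ is an isometry, so that each real root of $\lie{g}(\Sigma)$ has a single real-root preimage under $\pi$.
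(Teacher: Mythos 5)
Your treatment of the two bijections is correct and follows essentially the same route as the paper: the second correspondence via Proposition~\ref{corgen34}(3) together with the identity $\lie g(S)=\lie g(\Delta(\lie g(S))^{\mathrm{re}})$, and the first via the sandwich $\Psi'\subseteq W_{\Pi(\Psi')}(\Pi(\Psi'))\subseteq W_\Sigma(\Sigma)\subseteq W_{\Psi'}(\Psi')$ with $\Psi'=\Delta(\lie g(\Sigma))^{\mathrm{re}}$, followed by the uniqueness statement of Proposition~\ref{uniqq}. No issues there.

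The gap is in the last step, and it sits exactly where you flagged the ``genuine difficulty'': the $Q$-gradedness of $\ker\varphi$. Your proposed resolution --- that $\Sigma\subseteq\Delta^+$ forbids positive relations among the $\gamma_i$ and that the root-lattice map $\pi$ is an isometry, so each real root of $\lie g(\Sigma)$ has a single real-root preimage --- does not suffice. First, when $\Sigma$ is linearly dependent a single root $\beta$ of $\lie g(\Sigma)$ genuinely admits several distinct non-negative integral preimages $\alpha(\underline k)$, $\underline k\in T_\beta$ (in the notation of the paper; Example~\ref{FN2d} gives $\gamma_4+\gamma_2=2\gamma_1+2\gamma_3$, so $|T_\beta|\geq 2$ with all coefficients non-negative, and positivity of $\Sigma$ does not rule this out). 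Second, even if only one of these preimages is a real root of $\lie g'(B_\Sigma)$, the others may be imaginary roots with higher-dimensional root spaces, and what must be excluded is a nontrivial linear relation among the \emph{images} $\varphi(\lie g'(B_\Sigma)_{\alpha(\underline k)})$ inside the single space $\lie g(\Psi)_\beta$; uniqueness of real-root preimages says nothing about this, and the statement is needed for imaginary $\beta$ as well, where no isometry argument applies. The paper closes this gap with Proposition~\ref{sumisdirect1}: the sum $\lie g(\Psi)_\beta=\sum_{\underline k\in T_\beta}\varphi(\lie g'(B_\Sigma)_{\alpha(\underline k)})$ is direct, proved by induction on $\mathrm{ht}(\beta)$ using the $\mathfrak{sl}_2$-type nondegeneracy statement of Lemma~\ref{nulloperation1}(2) (a nonzero $y\in\lie g(\Psi)_{\underline k}$ cannot be killed by all $\mathrm{ad}\,x_{-\beta_i}$). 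Only with this directness in hand does the homogeneous decomposition of an element of $\ker\varphi$ split componentwise (Corollary~\ref{kerqgraded}), after which your reduction to indecomposable blocks via Lemma~\ref{lemidealkacmoody} goes through as you describe. Without an argument of this kind your proof of the ``Kac-Moody type'' assertion is incomplete.
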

 
\subsection{}  This subsection is devoted to the first correspondence in Theorem~\ref{mainresbij}.  The map $\Psi\mapsto \Pi(\Psi)$ is well-defined by Lemma~\ref{baseisapisystem}. Now let $\Sigma\subseteq \Delta^+$ be a $\pi$-system of $\Delta$ and note that $\Psi':=\Delta(\lie g(\Sigma))^{\mathrm{re}}$ is a real closed subroot system of $\Delta$ by Proposition~\ref{keyprop}. Moreover, since $\Pi(\Psi')\subseteq \Delta(\lie g(\Sigma))\cap \Delta^+$ each element of $\Pi(\Psi')$ is in the $\mathbb{Z}_+$-span of $\Sigma$ by Lemma~\ref{subal1}. Hence  we have $\Pi(\Psi')\subseteq \Sigma$ since $\Pi(\Psi')$ is the set of minimal elements (c.f. Section \ref{baseisapisystem}). This implies
$$\Psi'\subseteq W_{\Pi(\Psi')}(\Pi(\Psi'))\subseteq W_{\Sigma}(\Sigma)\subseteq W_{\Psi'}(\Psi')\subseteq \Psi'.$$
It follows that $W_{\Sigma}(\Sigma)$ is a  real closed subroot system of $\Delta$ and the map $\Sigma\mapsto W_{\Sigma}(\Sigma)$ is also well-defined. The fact that the maps are inverses of each other follows from Proposition \ref{uniqq}. 
\subsection{} The maps in the second correspondence of Theorem~\ref{mainresbij} are well-defined by Proposition~\ref{keyprop} and the fact that root generated subalgebras are symmetric regular subalgebras. Next we show 
\begin{equation}\label{uuhz41}\lie g(S)=\lie g(\Delta(\lie g(S))^{\mathrm{re}})\end{equation}
where the inclusion $\lie g(\Delta(\lie g(S))^{\mathrm{re}})\subseteq \lie g(S)$ is obvious. However, by definition we have $S\subseteq \Delta(\lie g(S))^{\mathrm{re}}$ and therefore the reverse inclusion also holds which finally gives equality. Now the bijective correspondence follows from Proposition~\ref{corgen34}(3). 

\subsection{} It remains to prove the last statement of the theorem. Note that we can not use the results 
     of \cite{carbone2021varvec} since $\Pi(\Psi)$ is in general neither finite nor linearly independent.\par For a real closed subroot system $\Psi$ let $\Sigma:=\Pi(\Psi)=\{\beta_i : i\in I\}\subseteq \Psi\cap \Delta^+$ be the unique $\pi$-system of $\Delta$ (c.f. Proposition~\ref{uniqq}). Define the (possibly infinite) matrix $B_{\Sigma}=(b_{ij})$ by $b_{ij}=(\beta_j,\beta_i^\vee)$ which satisfies $b_{i,j}\leq 0$ since $\beta_i-\beta_j\notin \Delta$. In particular, $B_{\Sigma}$ is a  generalized Cartan matrix and we denote the Chevalley generators of the Lie algebra $\lie{g}'(B_{\Sigma})$ by $e_i,f_i,\alpha^{\vee}_i, i\in I$. We define a map 
          \begin{equation}\label{mapphi}
          \varphi:\lie{g}'(B_{\Sigma})\to \lie{g}(\Psi),\ \ e_i\mapsto x_{\beta_i},\ \ \ f_i\mapsto x_{-\beta_i},\ \ \ \alpha^{\vee}_i\mapsto \beta_i^\vee.
          \end{equation}
 It is easy to see that all the relations of $\widetilde{\lie g}'(B_{\Sigma})$ are satisfied in $\lie{g}(\Psi)$ and the unique $Q$-graded ideal in $\widetilde{\lie g}'(B_{\Sigma})$ is mapped to zero as it is generated by the Serre relations \cite[Remark 2.1]{jurisich1998generalized}. Thus $\varphi$ is a well-defined Lie algebra homomorphism. Since $\lie g(\Sigma)=\lie g(\Psi)$ by Proposition~\ref{corgen34} we also have that $\varphi$ is surjective.
 
 \medskip 
\begin{defn}For $\underline{k}\in\bz^I$ we define $$\alpha(\underline{k})=\sum k_i\alpha_i,\ \  \underline{k}\boldsymbol{\cdot} \Sigma:=\sum k_i\beta_i$$
 and for $\beta\in \Delta(\lie g(\Psi))\cap\Delta^{\pm}$ we set
$$T_{\beta}:=\left\{\underline{k}\in \bz_{\pm}^I: \beta=\underline{k}\boldsymbol{\cdot}\Sigma\right\}.$$
\end{defn}
Note that $T_{\beta}\neq \emptyset$ by Proposition~\ref{corgen34}(2) and we can have $|T_{\beta}|\geq 2$ by Example~\ref{FN2d}. We collect some facts.
\begin{itemize}
    \item  We have $|T_{\beta_i}|=1$ for all $i\in I.$ To see this, write $\beta_i=\sum k_j\beta_j$ such that $k_j\geq 0$ for all $j$ and suppose there exists $r\neq i$ with $k_r>0.$ If $k_i=0,$ then we have $$0<(\beta_i,\beta_i)=\sum k_j(\beta_j,\beta_i)\le 0$$ since $(\beta_j,\beta_i)\le 0$ for all $j\neq i,$ a contradiction. If $k_i\geq 1,$ then $\sum_j (k_j-\delta_{ij})\beta_j=0$ with all coefficients non-negative and $k_r>0$ which is also impossible since $\Sigma\subseteq \Delta^+$.

    \vspace{0,2cm}
    
    \item If $\Pi(\Psi)$ is linearly independent, then $|T_\beta|=1$ for all $\beta\in \Delta(\lie g(\Psi)).$
\end{itemize}
The surjectivity of $\varphi$ gives 
\begin{equation}\label{sumoftuples}
    \lie g(\Psi)_\beta=\sum_{\substack{\underline{k}\in T_\beta\\ \alpha(\underline{k})\in \Delta(\lie g'(B_{\Sigma}))}} \lie g(\Psi)_{\underline{k}},\ \ \ \lie g(\Psi)_{\underline{k}}:=\varphi(\lie g'(B_{\Sigma})_{\alpha(\underline{k})}).
\end{equation}
for all $\beta\in \Delta(\lie g(\Psi))$. We shall prove in fact that the above sum above is direct. We need the following elementary lemma. 
\begin{lem}\label{nulloperation1} Let $\alpha\in \Delta(\lie g'(B_{\Sigma}))$. 
\begin{enumerate}
    \item 
    The restriction $\varphi_\alpha:\lie g'(B_{\Sigma})_\alpha\to \lie g(\Psi)$ of the map \eqref{mapphi} to $\lie g'(B_{\Sigma})_\alpha$ is  injective. 
    \vspace{0,2cm}
    
    \item   Let $\underline{k}\in \bz_{\pm}^I$ such that $\alpha=\alpha(\underline{k})$ and $y\in \lie g(\Psi)_{\underline{k}}$ be such that $[x_{\mp\beta_i},y]=0$ for all $i\in I.$ Then we have $y=0.$
\end{enumerate}

\end{lem}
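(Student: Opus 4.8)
The plan is to prove both statements together from the triangular structure of $\lie g'(B_\Sigma)$ and the invariant bilinear form, rather than by trying to show that $\ker\varphi$ is $Q$-graded. This is a genuine point: Example~\ref{FN2d} shows that $\Pi(\Psi)=\Sigma$ may be linearly dependent, so $B_\Sigma$ can be degenerate and $\ker\varphi$ is in general only graded by $\lie h'(B_\Sigma)$-weights, not by $Q$. Consequently the tempting shortcut ``$\ker\varphi\subseteq\lie c'(B_\Sigma)$'' via the center/ideal dichotomy of Lemma~\ref{lemidealkacmoody} is unavailable, and one must control $\varphi$ one root space at a time.

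First I would record that $B_\Sigma$ is symmetrizable. Since the $\beta_i$ are real roots we have $(\beta_i,\beta_i)>0$, and with $\epsilon_i:=(\beta_i,\beta_i)/2$ one checks $\epsilon_i b_{ij}=(\beta_j,\beta_i)=(\beta_i,\beta_j)=\epsilon_j b_{ji}$. Hence $\lie g(B_\Sigma)$ carries a nondegenerate invariant symmetric bilinear form $\langle\cdot,\cdot\rangle$ whose restriction gives a nondegenerate pairing $\lie g'(B_\Sigma)_\gamma\times\lie g'(B_\Sigma)_{-\gamma}\to\bc$ for every $\gamma\neq0$. Because any homogeneous element involves only finitely many indices, one may pass to the principal submatrix on its support to invoke the finite-rank form even when $I$ is infinite.

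The key structural input is the following claim: if $\gamma$ is a positive root of $\lie g'(B_\Sigma)$ with $\mathrm{ht}(\gamma)\ge2$ and $z\in\lie g'(B_\Sigma)_\gamma$ satisfies $[f_i,z]=0$ for all $i\in I$, then $z=0$ (and symmetrically with the $e_i$ on negative roots, via the Chevalley involution). I would prove this with the form: since the negative part $\lie n_-$ is generated by the $f_i$ and $\mathrm{ht}(\gamma)\ge2$, every element of $\lie g'(B_\Sigma)_{-\gamma}$ is a sum of brackets $[f_i,w]$ with $w\in\lie g'(B_\Sigma)_{-\gamma+\alpha_i}$; invariance then gives $\langle z,[f_i,w]\rangle=\langle[z,f_i],w\rangle=-\langle[f_i,z],w\rangle=0$, so $z$ is orthogonal to all of $\lie g'(B_\Sigma)_{-\gamma}$ and nondegeneracy of the pairing forces $z=0$.

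With this in hand I would prove part (1) by induction on $\mathrm{ht}(\alpha)$, reducing to a positive root $\alpha$ by the Chevalley involution (choosing the generators so that $\varphi\circ\omega=\omega_{\lie g}\circ\varphi$). The base case $\mathrm{ht}(\alpha)=1$ is immediate, as $\lie g'(B_\Sigma)_{\alpha_i}=\bc e_i$ and $\varphi(e_i)=x_{\beta_i}\neq0$. For $\mathrm{ht}(\alpha)\ge2$ and $\tilde y\in\ker\varphi_\alpha$, each $[f_i,\tilde y]$ lies in $\lie g'(B_\Sigma)_{\alpha-\alpha_i}$, which is either zero (when $\alpha-\alpha_i$ is not a root) or a positive root space of strictly smaller height on which $\varphi$ is injective by the inductive hypothesis; since $\varphi([f_i,\tilde y])=[x_{-\beta_i},\varphi(\tilde y)]=0$, we obtain $[f_i,\tilde y]=0$ for all $i$, and the structural claim gives $\tilde y=0$. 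Part (2) then follows quickly: lifting $y=\varphi(\tilde y)$ with $\tilde y\in\lie g'(B_\Sigma)_\alpha$, the hypothesis $[x_{\mp\beta_i},y]=0$ reads $\varphi([f_i,\tilde y])=0$ (taking $\underline k\in\bz_+^I$, the other case being symmetric); when $\alpha=\alpha_i$ is simple, $\tilde y=c\,e_i$ and this says $-c\,\beta_i^\vee=0$, so $c=0$, while when $\mathrm{ht}(\alpha)\ge2$ part (1) gives $[f_i,\tilde y]=0$ for all $i$ and the structural claim gives $\tilde y=0$; in either case $y=0$. The main obstacle is exactly the structural claim, i.e.\ that no nonzero element of $\lie n_+$ is annihilated by every $\mathrm{ad}\,f_i$: since $\ker\varphi$ need not be $Q$-graded one cannot invoke Lemma~\ref{lemidealkacmoody}, and it is the symmetrizability of $B_\Sigma$ together with the nondegenerate form (with the finite-support reduction to handle infinite $I$) that makes the argument go through.
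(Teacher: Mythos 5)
Your argument is correct, and for part (2) it follows the same skeleton as the paper: lift $y$ to a $Q$-homogeneous $x\in\lie g'(B_{\Sigma})_{\alpha(\underline k)}$, use the injectivity of $\varphi$ on the root spaces $\lie g'(B_{\Sigma})_{\alpha(\underline k)-\alpha_i}$ to deduce $[f_i,x]=0$ for all $i$, and then conclude $x=0$. Where you diverge is in the justification of that last implication and in part (1). The paper disposes of the last step in one line by appealing to the very definition of $\lie g'(B_{\Sigma})$: the ideal generated by $x$ is $Q$-graded and meets $\lie h'(B_{\Sigma})$ trivially, hence is contained in the maximal such ideal, which is zero in the quotient; this costs nothing and does not use symmetrizability of $B_{\Sigma}$. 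You instead observe that $B_{\Sigma}$ is symmetrizable (correctly, via $\epsilon_i b_{ij}=(\beta_i,\beta_j)=\epsilon_j b_{ji}$) and kill $x$ by pairing it against $\lie g'(B_{\Sigma})_{-\alpha(\underline k)}=\sum_i[f_i,\lie g'(B_{\Sigma})_{-\alpha(\underline k)+\alpha_i}]$ with the invariant form; this is a valid alternative, at the price of invoking the form and the finite-support reduction for infinite $I$, neither of which the paper needs at this point. For part (1) the paper simply cites \cite[Corollary 2.11]{carbone2021varvec}, whereas you give a self-contained induction on height resting on the same structural claim; that is a genuine gain in self-containedness, and your opening remark that one cannot shortcut via Lemma~\ref{lemidealkacmoody} because $\ker\varphi$ is not yet known to be $Q$-graded (that is only Corollary~\ref{kerqgraded}, which depends on this lemma) correctly identifies the logical order of the section.
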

\begin{proof}
The first part is proven similarly as \cite[Corollary 2.11]{carbone2021varvec} and the second part as \cite[Lemma 1.5]{kac1990infinite}. Nevertheless, we will prove the second part for the readers convenience for tuples $\underline{k}\in \bz_{+}^I$. Let $y\in \lie g(\Psi)_{\underline{k}}$ such that $[x_{-\beta_i},y]=0$ for all $i\in I.$ Since the map
    $\varphi_\alpha:\lie g'(B_{\Sigma})_{\alpha(\underline{k})}\to \lie g(\Psi)_{\underline{k}}$ is a linear isomorphism, there exists a $Q$-homogeneous element $x\in \lie g'(B_{\Sigma})_{\alpha(\underline{k})}$ such that $\varphi(x)=y$. So it will be enough to show in the rest of the proof that $x=0$. If $[f_i,x]\neq 0$ for some $i\in I$ we obtain once more with the injectivity of $\varphi_{\alpha(\underline{k})-\alpha_i}$:
    $$[f_i,x]\in \lie g'(B_{\Sigma})_{\alpha(\underline{k})-\alpha_i}\backslash\{0\}\implies [x_{-\beta_i},y]=\varphi_{\alpha(\underline{k})-\alpha_i}([f_i,x])\neq 0.$$
This is a contradiction and hence $[f_i,x]=0$ for all $i\in I$. In particular, the ideal of $\lie g'(B_{\Sigma})$ generated by $x$ is $Q$-graded and intersects $\lie h'(B_{\Sigma})$ trivially. This is impossible unless $x=0$. 
\end{proof}
\begin{prop}\label{sumisdirect1}
    The sum in \ref{sumoftuples} is a direct sum of vector spaces.
\end{prop}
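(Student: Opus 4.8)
The plan is to establish directness of the sum for each fixed $\beta\in\Delta(\lie g(\Psi))$ separately, arguing by induction on the height $\mathrm{ht}(\beta)$. I would treat $\beta\in\Delta^+$ (so that every $\underline k\in T_\beta$ lies in $\bz_+^I$); the case $\beta\in\Delta^-$ is entirely analogous, with the roles of $x_{-\beta_i}$ and $x_{\beta_i}$ interchanged and $\bz_-^I$ in place of $\bz_+^I$. Suppose towards a contradiction that $\beta$ is of minimal height among the positive roots of $\lie g(\Psi)$ for which the sum fails to be direct, so that there is a relation $\sum_{\underline k} y_{\underline k}=0$ with $y_{\underline k}\in\lie g(\Psi)_{\underline k}$ (the sum running over those $\underline k\in T_\beta$ with $\alpha(\underline k)\in\Delta(\lie g'(B_\Sigma))$) and $y_{\underline k^\ast}\neq 0$ for at least one index $\underline k^\ast$.

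First I would use Lemma~\ref{nulloperation1}(2) to produce a separating lowering operator: since $y_{\underline k^\ast}\neq 0$, there exists $i_0\in I$ with $[x_{-\beta_{i_0}},y_{\underline k^\ast}]\neq 0$. The crucial observation is that $\mathrm{ad}\, x_{-\beta_{i_0}}$ respects the finer $\bz^I$-grading and not merely the root grading: writing $y_{\underline k}=\varphi(x)$ with $x\in\lie g'(B_\Sigma)_{\alpha(\underline k)}$, we have $[x_{-\beta_{i_0}},y_{\underline k}]=\varphi([f_{i_0},x])\in\varphi(\lie g'(B_\Sigma)_{\alpha(\underline k)-\alpha_{i_0}})=\lie g(\Psi)_{\underline k-\epsilon_{i_0}}$, where $\underline k-\epsilon_{i_0}$ denotes the tuple obtained by decreasing the $i_0$-th entry of $\underline k$ by one. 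If $k_{i_0}=0$ then $\alpha(\underline k)-\alpha_{i_0}$ has mixed signs and is therefore not a root of $\lie g'(B_\Sigma)$, so this bracket vanishes; in particular $k^\ast_{i_0}\geq 1$.

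Applying $\mathrm{ad}\, x_{-\beta_{i_0}}$ to the relation yields $\sum_{\underline k}[x_{-\beta_{i_0}},y_{\underline k}]=0$. Since $|T_{\beta_i}|=1$ for every $i$ (so the decomposition is trivially direct at each $\beta_i$), the minimal counterexample satisfies $\beta\notin\Sigma$, whence $\beta-\beta_{i_0}$ is a nonzero element of $\Delta^+\cap\Delta(\lie g(\Psi))$ of strictly smaller height, the latter because $[x_{-\beta_{i_0}},y_{\underline k^\ast}]\neq 0$ already exhibits $\beta-\beta_{i_0}$ as a root. Discarding the vanishing terms (those with $k_{i_0}=0$), the surviving summands are indexed by the pairwise distinct tuples $\underline k-\epsilon_{i_0}\in\bz_+^I$ and lie in the distinct graded pieces $\lie g(\Psi)_{\underline k-\epsilon_{i_0}}$ entering the decomposition \eqref{sumoftuples} at $\beta-\beta_{i_0}$. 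By the minimality of $\beta$ this latter sum is direct, so every surviving summand vanishes individually; in particular $[x_{-\beta_{i_0}},y_{\underline k^\ast}]=0$, contradicting the choice of $i_0$. This forces the original relation to be trivial and completes the induction.

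I expect the only genuinely delicate point to be the bookkeeping in the reduction step: verifying that $\mathrm{ad}\, x_{-\beta_{i_0}}$ maps the distinct $\bz^I$-homogeneous pieces over $\beta$ to distinct $\bz^I$-homogeneous pieces over $\beta-\beta_{i_0}$, so that the inductive directness at the lower height applies termwise. The two ingredients that make this run are Lemma~\ref{nulloperation1}(2), which guarantees that some lowering operator detects a given nonzero homogeneous element, and the elementary fact that the $\bz^I$-grading on $\lie g'(B_\Sigma)$ forbids weights of mixed sign, which kills the terms with $k_{i_0}=0$ and keeps the surviving tuples inside $\bz_+^I$.
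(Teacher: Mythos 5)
Your proof is correct and follows essentially the same route as the paper's: induction on $\mathrm{ht}(\beta)$, using Lemma~\ref{nulloperation1}(2) to find a lowering operator $\mathrm{ad}\,x_{-\beta_{i_0}}$ detecting a nonzero homogeneous component, and observing that this operator respects the $\bz^I$-grading so that the inductive hypothesis at $\beta-\beta_{i_0}$ applies to the surviving (pairwise distinct) shifted tuples. The only cosmetic difference is that the paper simply collects all indices with nonvanishing bracket rather than singling out the $k_{i_0}=0$ terms (note that terms with $k_{i_0}\geq 1$ may also vanish, so your parenthetical identification of the vanishing terms is slightly too narrow, though this does not affect the argument).
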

\begin{proof} Let $\beta\in \Delta(\lie g(\Psi))\cap \Delta^{+}$ and let $\mathrm{ht}(\beta)$ be the sum over the coefficients when $\beta$ is expressed in terms of a simple system of $\lie g(A)$. The proof is by induction on $\mathrm{ht}(\beta)$ where the base case ($\beta=\beta_i$ for some $i\in I$) obviously holds by the discussion preceeding Lemma~\ref{nulloperation1}. Let $\underline{k}_1,\dots, \underline{k}_r$ be distinct elements of $T_\beta$ such that $\alpha(\underline{k}_1),\dots,\alpha(\underline{k}_r)$ are all roots of $\lie g'(B_{\Sigma})$. By contradiction assume
$$y_1+\cdots +y_r=0,\ \ y_i\in \lie g(\Psi)_{\underline{k}_i}\backslash\{0\}.$$
Since $\underline{k}_1\in \mathbb{Z}^I_+$ and $\alpha(\underline{k}_1)\in \Delta(\lie g'(B_{\Sigma}))$ we can find with Lemma~\ref{nulloperation1}(2) an index $j\in I$ such that $[x_{-\beta_j},y_1]\neq 0$; in particular $\beta-\beta_j\in \Delta(\lie g(\Psi))\cap \Delta^+$ since the $j$-th entry of $\underline{k}_1$ has to be positive. We get 
$$[x_{-\beta_j},y_1]+\cdots +[x_{-\beta_j},y_r]=[x_{-\beta_j},y_{i_1}]+\cdots+[x_{-\beta_j},y_{i_s}]=0$$
where $\{i_1,\dots,i_s\}\subseteq \{1,\dots,r\}$ is the collection of indices with $[x_{-\beta_j},y_{i_1}],\dots,[x_{-\beta_j},y_{i_s}]\neq 0$ and $1\in\{i_1,\dots,i_s\}$. Let $\underline{e}_j$ be the tuple with entry $1$ at $j$-th position and zero elsewhere. Since $\underline{k}_{i_1}-\underline{e}_j,\dots,\underline{k}_{i_s}-\underline{e}_j\in T_{\beta-\beta_j}$ are pairwise distinct elements and $\alpha(\underline{k}_{i_1}-\underline{e}_j),\dots,\alpha(\underline{k}_{i_s}-\underline{e}_j)$ are all roots of $\lie g'(B_{\Sigma})$, we can apply the induction hypothesis and get $[x_{-\beta_j},y_1]=0$ which is a contradiction.
This completes the proof.
\end{proof}

\begin{cor}\label{kerqgraded}
The kernel of the map $\varphi$ from \eqref{mapphi} is a $Q$-graded ideal in $\lie g'(B_{\Sigma}).$

\begin{proof} 
    Let $x\in\ker(\varphi)$ and $x=x_1+x_2+\cdots +x_r$ be the decomposition of $x$ into $Q$-homogeneous components. We will prove the statement by induction on the length of such a decomposition. The case $r=1$ is trivial; so let $r>1$. Since no partial sum is contained in $\ker(\varphi)$ (otherwise we are done by induction) there exists $\beta\in \Delta(\lie g(\Psi))$ with $\varphi(x_i)\in \lie g(\Psi)_{\beta}$. Since $x_i$ is $Q$-homogeneous we can find distinct $\underline{k}_1,\dots ,\underline{k}_r\in T_{\beta}$ with $\varphi(x_i)\in \lie g(\Psi)_{\underline{k}_i}$ and $\alpha(\underline{k}_1),\dots\alpha(\underline{k}_r)\in \Delta(\lie g'(B_{\Sigma}))$. Since 
    $$\varphi(x)=0=\varphi(x_1)+\cdots+\varphi(x_r)$$
    we get with Proposition~\ref{sumisdirect1} that this is only possible if $x_i\in \ker(\varphi)$ for all $i\in\{1,\dots,r\}$.
  \end{proof}
 
\end{cor}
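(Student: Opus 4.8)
The plan is to prove that $\ker(\varphi)$ is \emph{$Q$-graded}; that it is an ideal is automatic since $\varphi$ is a homomorphism of Lie algebras, so the only content is that $\ker(\varphi)$ is spanned by its $Q$-homogeneous components. Concretely, writing an arbitrary $x\in\ker(\varphi)$ as a sum $x=x_1+\cdots+x_r$ of nonzero $Q$-homogeneous pieces with pairwise distinct degrees $\alpha(\underline{k}_1),\dots,\alpha(\underline{k}_r)$, I would show that each $x_i$ already lies in $\ker(\varphi)$.

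First I would assemble the relevant direct sum decomposition of $\lie g(\Psi)$. By the root space decomposition we have $\lie g(\Psi)=\bigoplus_{\beta}\lie g(\Psi)_\beta$, and for each $\beta\in\Delta(\lie g(\Psi))$ Proposition~\ref{sumisdirect1} upgrades the spanning statement \eqref{sumoftuples} to the direct sum $\lie g(\Psi)_\beta=\bigoplus_{\underline{k}\in T_\beta}\lie g(\Psi)_{\underline{k}}$. Combining these two facts yields a single direct sum decomposition of $\lie g(\Psi)$ indexed by all admissible tuples $\underline{k}$, in which the summand attached to $\underline{k}$ is exactly the image $\varphi(\lie g'(B_{\Sigma})_{\alpha(\underline{k})})=\lie g(\Psi)_{\underline{k}}$.

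With this in hand the argument is short. Since the simple roots $\alpha_i$ freely generate $Q$, distinct $Q$-degrees $\alpha(\underline{k}_i)$ correspond to distinct tuples $\underline{k}_i$, so the images $\varphi(x_i)\in\lie g(\Psi)_{\underline{k}_i}$ sit in pairwise distinct summands of the combined direct sum—regardless of whether different $\underline{k}_i$ happen to yield the same $\beta=\underline{k}_i\boldsymbol{\cdot}\Sigma$. Applying $\varphi$ to $x=0$ gives $\varphi(x_1)+\cdots+\varphi(x_r)=0$, and directness forces every $\varphi(x_i)=0$; hence each $x_i\in\ker(\varphi)$, as desired. Equivalently, one may organize this as an induction on $r$, grouping the components according to the value of $\beta$ and applying Proposition~\ref{sumisdirect1} within each group.

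The main obstacle is entirely encapsulated in Proposition~\ref{sumisdirect1}: because the map $\underline{k}\mapsto\underline{k}\boldsymbol{\cdot}\Sigma$ need not be injective (one can have $|T_\beta|\ge 2$, as in Example~\ref{FN2d}), the homomorphism $\varphi$ fails to be graded for the coarser $\Delta$-grading, and a priori cancellations could occur between components of $x$ lying over the same root $\beta$ but in different $Q$-degrees. It is precisely the directness of $\lie g(\Psi)_\beta=\bigoplus_{\underline{k}\in T_\beta}\lie g(\Psi)_{\underline{k}}$—established via the injectivity of $\varphi$ on homogeneous spaces in Lemma~\ref{nulloperation1}(1) together with the vanishing criterion of Lemma~\ref{nulloperation1}(2)—that rules such cancellations out and makes $\ker(\varphi)$ a $Q$-graded ideal.
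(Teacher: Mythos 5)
Your proof is correct and takes essentially the same approach as the paper: both arguments reduce everything to Proposition~\ref{sumisdirect1}, the paper via an induction that groups the $Q$-homogeneous components of $x$ according to the root $\beta$ their images lie over, and you via the equivalent global direct sum decomposition of $\lie g(\Psi)$ indexed by the tuples $\underline{k}$ --- a reformulation whose equivalence to the paper's induction you yourself point out.
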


The following theorem together with Proposition~\ref{keyprop} and \eqref{uuhz41} finishes the proof of Theorem~\ref{mainresbij}.
   \begin{thm}\label{KMtypereg}
    Let $\Psi$ be a real closed subroot system of $\Delta$, then $\lie g(\Psi)$ is of Kac-Moody type. 
          \begin{proof} Let $\varphi$ be the surjective Lie algebra homomorphism from \eqref{mapphi}.
         Assume first that $B_{\Sigma}$ is indecomposable. Then by Lemma~\ref{lemidealkacmoody}(2) and Corollary~\ref{kerqgraded} we get $\ker(\varphi)\subseteq \lie c'(B_{\Sigma})$
         and hence the desired short exact sequence.
Now assume that $B_{\Sigma}$ is decomposable and let $B_{\Sigma}=\bigoplus B_{\Sigma,k}$ be a decomposition into indecomposables. Then we have by 
Lemma~\ref{lemidealkacmoody}(3) and Corollary~\ref{kerqgraded} that $\ker(\varphi)=\bigoplus\lie{i}_k$ for some $Q$-graded proper ideals $\lie{i}_k$ of $\lie{g}'(B_{\Sigma,k})$. Again by Lemma~\ref{lemidealkacmoody}(2) we get $\lie{i}_k\subseteq \lie c'(B_{\Sigma,k})$ and thus
$$\ker(\varphi)=\bigoplus\lie{i}_k\subseteq \bigoplus \lie c'(B_{\Sigma,k})\subseteq \lie c'(B_{\Sigma}).$$
\end{proof}
\end{thm}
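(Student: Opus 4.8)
The plan is to establish that $\lie g(\Psi)$ is of Kac-Moody type by exhibiting the short exact sequence required in Definition~\ref{defnKMtype}. The natural candidate for the symmetrizable generalized Cartan matrix is $B_\Sigma = (b_{ij})$ with $b_{ij} = (\beta_j, \beta_i^\vee)$, where $\Sigma = \Pi(\Psi) = \{\beta_i : i \in I\}$ is the unique $\pi$-system from Proposition~\ref{uniqq}. First I would observe that $B_\Sigma$ is indeed a generalized Cartan matrix: the diagonal entries are $2$, and the off-diagonal entries satisfy $b_{ij} \le 0$ because $\beta_i - \beta_j \notin \Delta$ (the defining property of a $\pi$-system), so in particular $(\beta_j, \beta_i^\vee) \le 0$. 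Symmetrizability follows from the existence of the invariant form $(\cdot,\cdot)$ on $\lie g$ restricted to the span of $\Sigma$. The map $\varphi : \lie g'(B_\Sigma) \to \lie g(\Psi)$ sending $e_i \mapsto x_{\beta_i}$, $f_i \mapsto x_{-\beta_i}$, $\alpha_i^\vee \mapsto \beta_i^\vee$ is a well-defined surjective Lie algebra homomorphism, as established in the construction preceding \eqref{mapphi}; surjectivity uses $\lie g(\Sigma) = \lie g(\Psi)$ from Proposition~\ref{corgen34}(1).

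The crucial point is then to control $\ker(\varphi)$. By Corollary~\ref{kerqgraded}, the kernel is a $Q$-graded ideal of $\lie g'(B_\Sigma)$, which is precisely the input needed to apply the structural results for $\lie g'(A)$ collected in Lemma~\ref{lemidealkacmoody}. I would split into two cases according to whether $B_\Sigma$ is indecomposable. In the indecomposable case, Lemma~\ref{lemidealkacmoody}(2) says every proper $Q$-graded ideal lies in the centre $\lie c'(B_\Sigma)$, and since $\ker(\varphi)$ is proper (as $\varphi$ is surjective onto a nonzero algebra), we immediately get $\ker(\varphi) \subseteq \lie c'(B_\Sigma)$, yielding the short exact sequence demanded by Definition~\ref{defnKMtype}.

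For the decomposable case, I would write $B_\Sigma = \bigoplus_k B_{\Sigma,k}$ as a direct sum of indecomposable blocks. Lemma~\ref{lemidealkacmoody}(3) then decomposes the $Q$-graded ideal $\ker(\varphi)$ as $\bigoplus_k \lie i_k$ with each $\lie i_k$ a $Q$-graded ideal of $\lie g'(B_{\Sigma,k})$; each $\lie i_k$ is proper since $\ker(\varphi)$ is. Applying Lemma~\ref{lemidealkacmoody}(2) blockwise gives $\lie i_k \subseteq \lie c'(B_{\Sigma,k})$, and since the centres satisfy $\bigoplus_k \lie c'(B_{\Sigma,k}) \subseteq \lie c'(B_\Sigma)$, I conclude $\ker(\varphi) \subseteq \lie c'(B_\Sigma)$, completing the argument in both cases.

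The genuinely hard part is not this final assembly — which is a clean two-case application of Lemma~\ref{lemidealkacmoody} — but rather the prerequisite Corollary~\ref{kerqgraded}, namely that $\ker(\varphi)$ is $Q$-graded. This is where the real work lives, and it rests in turn on Proposition~\ref{sumisdirect1} (directness of the sum in \eqref{sumoftuples}) and the injectivity of each $\varphi_\alpha$ from Lemma~\ref{nulloperation1}. The subtlety, emphasized in the paragraph opening this subsection, is that $\Pi(\Psi)$ may be neither finite nor linearly independent (as in Example~\ref{FN2d}), so one cannot simply invoke the results of \cite{carbone2021varvec}; the possible failure of linear independence is exactly what allows $|T_\beta| \ge 2$ and forces the delicate height induction in Proposition~\ref{sumisdirect1}. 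Once that machinery is in place, the theorem itself follows formally, so I would present the proof as the short two-case deduction above and treat it as the capstone of the preceding lemmas rather than the locus of difficulty.
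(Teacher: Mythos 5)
Your proof is correct and follows essentially the same route as the paper's: the same surjection $\varphi$ from \eqref{mapphi}, the same reliance on Corollary~\ref{kerqgraded} for the $Q$-gradedness of the kernel, and the identical two-case application of Lemma~\ref{lemidealkacmoody} (indecomposable versus blockwise, with the centres of the blocks summing into $\lie c'(B_{\Sigma})$). Your assessment that the real content lives in Corollary~\ref{kerqgraded} and Proposition~\ref{sumisdirect1} rather than in this final assembly also matches how the paper organizes the argument.
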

 \begin{rem}\label{remlininde1} If $\Sigma=\Pi(\Psi)$ ($\Psi$ real closed subroot system) is linearly independent, then  $\lie g(\Psi)$ is in fact isomorphic to $\lie g'(B_{\Sigma})$. The linear independence of $\Sigma$ is guaranteed for example when $\lie g$ is of finite type or $B_\Sigma$ is an affine GCM. 
    \end{rem}
\subsection{} We end this section with another example. 
\begin{example}\label{exrank2}
    Let $A=\begin{pmatrix}
        2 & -3\\ -3 & 2
    \end{pmatrix}$ and note that this is a hyperbolic GCM of rank $2$. Let $$\beta_1^j:=f_{2j}\alpha_1+f_{2j+2}\alpha_2,\ \ \beta_2^j:=f_{2j+2}\alpha_1+f_{2j}\alpha_2, \text{ for }j\in\bz_+,$$ where $f_j$ is the $j$-th Fibonacci number:
    $$f_0=0,\ \ f_1=1,\ \ f_{j+2}=f_{j+1}+f_j, \text{ for }j\in\bz_+.$$
    The positive real roots of $\lie g(A)$ are given by $\Delta(\lie g(A))^{\mathrm{re},+}=\{\beta_1^j,\beta_2^j:j\in\bz_+\}$ (see \cite[Exercise 5.28]{kac1990infinite}). We claim that 
    $$\beta_1^j-\beta_1^k,\ \beta_2^j-\beta_2^k\in \Delta(\lie g(A))^{\mathrm{im}},\ \ \text{ for all } j,k\in\bz_+ \text{ with } j\neq k,$$
    $$\beta_1^j-\beta_2^k\notin \Delta(\lie g(A)),\ \ \text{ for all } j,k\in \bz_+.$$ Assuming the claim, it follows that every $\pi$-system $\Sigma$ of $\Delta$ with $\Sigma\subseteq \Delta^+$ is linearly independent and satisfies $|\Sigma|\le 2.$ For the claim, using Lemma \ref{sumnotreal} and \cite[Proposition 5.10(c)] {kac1990infinite}, it will be enough to show $(\beta,\beta)\le 0$ if $\beta=\beta_1^j-\beta_1^k$ or $\beta=\beta_2^j-\beta_2^k$ for some $j\neq k\in\bz_+$ and $(\beta,\beta)> 0$ if $\beta=\beta_1^j-\beta_2^k$ for some $j,k\in\bz_+.$ Let $\beta=\beta_1^j-\beta_2^k.$ Then 
    \begin{align*}
        (\beta,\beta)&=2\left(f_{2j+1}+f_{2k+1})^2-(f_{2j}-f_{2k+2})(f_{2j+2}-f_{2k})\right)\\
        &=(f_{2j+1}^2-f_{2j}f_{2j+2})+(f_{2k+1}^2-f_{2k+2}f_{2k})+2f_{2j+1}f_{2k+1}+f_{2j}f_{2k}+f_{2k+2}f_{2j+2}\\
        & >0 \ \ \ \ \text{ (by Cassini's formula, \cite[Theorem 5.3]{koshy2001fibonacci})}. 
    \end{align*}
    
    If $\beta=\beta_1^j-\beta_1^k$ (similarly $\beta=\beta_2^j-\beta_2^k$), then 
    \begin{align*}
        \frac{1}{2}(\beta,\beta)= &\left((f_{2k+1}-f_{2j+1})^2-(f_{2j}-f_{2k})(f_{2j+2}-f_{2k+2})\right)\\
        =&\left(2-(f_{2j+1}f_{2k+1}-f_{2j}f_{2k+2}+f_{2j+1}f_{2k+1}-f_{2j+2}f_{2k})\right)\\
        =& \left(2-(f_{2k-2j+1}+f_{2k-2j+1})\right) \ \ \text{(by \cite[Indentity 2, page 87]{koshy2001fibonacci})}\\
        \leq &0
    \end{align*}
Hence by Remark~\ref{remlininde1}, all root generated subalgebras of $\lie g(A)$ are isomorphic to the derived algebra of a Kac-Moody algebra.
    \end{example}
    
\section{Symmetric regular subalgebras of untwisted affine Kac-Moody algebras}\label{section5}
We have seen that root generated subalgebras form an important class of symmetric regular subalgebras and that they correspond to real closed subroot systems. In the rest of this section let $\lie g$ be an untwisted affine Kac-Moody algebra. We will use the explicit description of (maximal) real closed subroot systems from \cite{roy2019maximal} to classify all symmetric regular subalgebras in this section.  
\subsection{}
Let $\mathring{\lie g}$ be a finite dimensional simple Lie algebra with Cartan subalgebra $\mathring{\lie h}$ and set of roots $\mathring{\Delta}$. The untwisted affine algebra is realized as $\lie g=\mathring{\lie g}\otimes \mathbb{C}[t^{\pm}]\oplus \mathbb{C}c\oplus \mathbb{C}d$ and the roots with respect to the Cartan subalgebra $\lie h=\mathring{\lie h}\oplus \mathbb{C}c\oplus \mathbb{C}d$ are given by 
$$\Delta=\Delta^{\mathrm{re}}\cup \Delta^{\mathrm{im}},$$
$$\Delta^{\mathrm{re}}=\{\alpha + r\delta : \alpha\in\mathring{\Delta},\ r\in \mathbb{Z}\},\ \ \Delta^{\mathrm{im}}=\{r\delta : r\in \mathbb{Z}\backslash \{0\}\}.$$
Moreover, we have
$$x_{\alpha+r\delta}=\mathbb{C}(x_{\alpha}\otimes t^r),\ \alpha\in \mathring{\Delta}, \ r\in\mathbb{Z},\  \ \lie g_{r\delta}=\mathring{\lie h}\otimes t^r,\ r\neq 0.$$
Given a symmetric regular subalgebra $\lie s$ we set in the rest of this subsection $\Psi:=\Delta(\lie s)^{\mathrm{re}}$ which is a real closed subroot system of $\Delta$. Hence from \cite[Section 2]{roy2019maximal} we can derive that there exists a closed subroot system $\mathring{\Psi}\subseteq\mathring{\Delta}$ 
with irreducible components $\mathring{\Psi}_1,\dots,\mathring{\Psi}_s$,
non-negative integers $k_1,\dots,k_s$ and
$\mathbb{Z}$-linear functions $f_i: \mathring{\Psi}_i \to \mathbb{Z} $, $1\leq i\leq s$, such that
\begin{equation}\label{defPsi}
    \Psi = \bigcup_{i=1}^s \Psi_i,\ \ \Psi_i=\left\{\alpha + (f_i(\alpha) + k_ir)\delta : \alpha\in \mathring{\Psi}_i, r\in \mathbb{Z}\right\}.
\end{equation}
\begin{rem}
For the explicit description of closed subroot systems of finite root system see \cite{dynkin1952semisimple}.
\end{rem} 
Let $\lie h(\mathring{\Psi}_i)$ be the subspace of $\mathring{\lie h}$ spanned by $\{h_\alpha : \alpha \in \mathring{\Psi}_i\}$ and denote the orthogonal complement of $\lie h(\mathring{\Psi}_i)$ in $\mathring{\lie h}$ with respect to the Killing form by
$\lie h(\mathring{\Psi}_i)^\perp$. 
Then the subalgebra $\lie g(\Psi)$ of $\lie s$ is given by 
\begin{equation}\label{rootgenaff}\lie g(\Psi)= \bigoplus_{i=1}^s\bigoplus\limits_{\substack{\alpha\in \mathring{\Psi}_i\\ r\in \mathbb{Z}}} \mathbb{C}(x_{\alpha}\otimes t^{f_i(\alpha)+k_ir})\oplus \bigoplus\limits_{\substack{x\in\mathbb{Z}}}\left(\bigoplus_{\substack{i=1\\ x\in k_i\mathbb{Z}}}^s\lie h(\mathring{\Psi}_i)\right)\otimes t^{x} \oplus \mathbb{C}c'.\end{equation}
where $$c'=\begin{cases}
  0  & \text{if }k_i=f_i(\alpha)=0\ \ \forall \alpha\in\mathring{\Psi}_i,\ i=1,2,\dots,s,\\
 c & \text{otherwise}.
  \end{cases}$$
To see this, we only need to observe that the right hand side of \eqref{rootgenaff} is a subalgebra containing $\lie g_{\alpha}$ for all $\alpha\in \Psi$; the containment of the right hand side in $\lie g(\Psi)$ is obvious. In order to understand the structure of $\lie s$ completely we study further the imaginary root spaces that appear in $\lie s$.
We define $I(\lie s):=\{r\in\mathbb{Z} : r\delta\in \Delta(\lie s)^{\mathrm{im}}\}\cup\{0\}$
and write
$$I(\lie s)=(k_1\mathbb{Z} \cup \cdots \cup k_s\mathbb{Z})\ \dot\cup \ \Lambda$$
for some subset $\Lambda\subseteq \mathbb{Z}\backslash \{0\}$ with $\Lambda=-\Lambda$. The latter condition is implied by the symmetry of $\Delta(\lie s)^{\mathrm{im}}$. For each $x\in I(\lie s)$, we have $\lie s_{x\delta} = \mathring{\lie h}_x\otimes t^{x}$ for some subspace $\mathring{\lie h}_x\subseteq \mathring{\lie h}$ and $\lie h(\mathring{\Psi}_i)\subseteq \mathring{\lie h}_x$ if $x\in k_i\mathbb{Z}$. Moreover, since $\lie s$ is a subalgebra, we obtain for arbitrary $v\in \mathring{\lie h}_x$ and $\alpha\in \mathring{\Psi}_i$ 
$$\left[x_\alpha\otimes t^{f_i(\alpha)+k_ir}, v\otimes t^x\right]=\alpha(v)(x_\alpha\otimes t^{f_i(\alpha)+k_ir+x})\in \lie s.$$
If $x\notin k_i\mathbb{Z}$, this is only possible if $\alpha(v)=0$ and thus 
 $$\mathring{\lie h}_x\subseteq \bigcap_{\substack{i=1\\ x\notin k_i\mathbb{Z}}}^s \lie h(\mathring{\Psi}_i)^\perp, \ \ \ \forall x\in I(\lie s).$$
\begin{thm}\label{mainthmaffine1} Let $\lie s$ be a symmetric regular subalgebra of $\lie g$ such that $\lie s\subseteq [\lie g,\lie g]$ and set $\Psi=\Delta(\lie s)^{\mathrm{re}}$. Then there exists:
\begin{itemize}
    \item a closed subroot system $\mathring{\Psi}\subseteq \mathring{\Delta}$ with irreducible components $\mathring{\Psi}_1,\dots, \mathring{\Psi}_s$ and non-negative integers $k_1,\dots,k_s\in\mathbb{Z}_+$,
    \vspace{0,1cm}
    
    \item a symmetric subset $\Lambda\subseteq \mathbb{Z}\backslash\{0\}$ satisfying $\Lambda\cap k_i\mathbb{Z}=\emptyset$ for all $1\leq i\leq s$,
    \vspace{0,1cm}
    \item $\mathbb{Z}$-linear functions $f_i: \mathring{\Psi}_i \to \mathbb{Z} $, $1\leq i\leq s$, \vspace{0,1cm}
    
    \item vector subspaces $V_x\subseteq \mathring{\lie h}$ for all $x\in \mathrm{I}(\underline{\mathbf k},\Lambda):=(k_1\mathbb{Z} \cup \cdots \cup k_s\mathbb{Z})\cup \Lambda$ satisfying 
    
    $$V_x\subseteq \displaystyle\bigcap_{\substack{i=1\\  x\notin k_i\mathbb{Z}}}^s \lie h(\mathring{\Psi}_i)^\perp,\ \ \ V_{x}\cap \left(\sum_{\substack{i=1\\ x\in k_i\mathbb{Z}}}^s\lie h(\mathring{\Psi}_i)\right)=0.$$ 
\end{itemize}
such that
\begin{equation}\label{symmform4}\lie s = \lie g(\Psi)\oplus \bigoplus_{x\in \mathrm{I}(\underline{\mathbf k},\Lambda)} V_x\otimes t^x\end{equation}
where $\lie g(\Psi)$ is given as in \eqref{rootgenaff}. Conversely any subalgebra of the form \eqref{symmform4} is a symmetric regular subalgebra contained in the derived algebra $[\lie g,\lie g]$. 
\begin{proof}
The forward direction follows from the discussion preceeding the theorem. So we only check the converse part, namely that the right hand side of \eqref{symmform4} is indeed a symmetric regular subalgebra. Define the right hand side of \ref{symmform4} as $\lie s'$ and note that $\lie s'$ is clearly $\lie h$-invariant. To check that it is a Lie algebra we let $x\in \mathrm{I}(\underline{\mathbf k},\Lambda)$ and $\alpha\in \mathring{\Psi}_i$, $1\le i\le s$. Consider 
\begin{equation}\label{liealgnachweis}\left[x_\alpha\otimes t^{f_i(\alpha)+k_ir}, v\otimes t^x\right]=\alpha(v)(x_\alpha\otimes t^{f_i(\alpha)+k_ir+x}),\ \ v\in V_x.\end{equation}
If $x\notin k_i\mathbb{Z}$ we have $V_x\subseteq \lie h(\mathring{\Psi}_i)^\perp$ by construction and thus $\alpha(v)=0$. If $x\in k_i\mathbb{Z}$ then we have 
$\alpha(v)(x_\alpha\otimes t^{f_i(\alpha)+k_ir+x})\in \lie g(\Psi)\subseteq \lie s'.$ In either case \eqref{liealgnachweis} is contained in $\lie s'$ again. All other cases are checked similarly. 
\end{proof}
\end{thm}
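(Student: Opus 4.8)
\emph{Strategy.} This is an equivalence, so I would prove the two directions separately, spending almost all the effort on the converse. For the forward implication the heavy lifting is already contained in the discussion preceding the statement. Given a symmetric regular $\lie s\subseteq[\lie g,\lie g]$, Proposition~\ref{keyprop} shows $\Psi=\Delta(\lie s)^{\mathrm{re}}$ is a real closed subroot system, and the affine classification of \cite{roy2019maximal} recalled in \eqref{defPsi} provides the data $\mathring\Psi$, its irreducible components $\mathring\Psi_i$, the integers $k_i$, and the linear maps $f_i$. Symmetry of $\lie s$ forces $\mathfrak{sl}(\alpha)\subseteq\lie s$ for each $\alpha\in\Psi$ and hence $\lie g(\Psi)\subseteq\lie s$, with $\lie g(\Psi)$ as in \eqref{rootgenaff}. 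It then remains only to describe the weight spaces $\lie s_{x\delta}=\mathring{\lie h}_x\otimes t^x$ at imaginary and zero levels.

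\emph{Producing the $V_x$.} The bracket computation made just before the theorem yields two properties of each $\mathring{\lie h}_x$: it contains $\lie h(\mathring\Psi_i)$ whenever $x\in k_i\mathbb{Z}$, and it is contained in $\bigcap_{i:\,x\notin k_i\mathbb{Z}}\lie h(\mathring\Psi_i)^\perp$. I would then choose for each $x\in I(\lie s)$ a vector space complement $V_x$ of $\sum_{i:\,x\in k_i\mathbb{Z}}\lie h(\mathring\Psi_i)$ inside $\mathring{\lie h}_x$. By construction $V_x$ meets that sum trivially and still lies in the required intersection of orthogonal complements, so both conditions of the theorem hold, and $\lie s_{x\delta}$ splits as its $\lie g(\Psi)$-part plus $V_x\otimes t^x$. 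Putting $\Lambda=I(\lie s)\setminus(k_1\mathbb{Z}\cup\cdots\cup k_s\mathbb{Z})$, which is symmetric since $\Delta(\lie s)^{\mathrm{im}}$ is, and summing over all levels gives \eqref{symmform4}.

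\emph{The converse.} Here I would start from the algebra $\lie s'$ defined by the right hand side of \eqref{symmform4} and verify it is symmetric regular and lies in $[\lie g,\lie g]$. Invariance under $\lie h$ is clear as $\lie s'$ is a sum of weight spaces, membership in $[\lie g,\lie g]$ holds because no summand involves $d$, and symmetry follows from $\Psi=-\Psi$ together with $\Lambda=-\Lambda$. The substantive point is closure under the bracket, which I would break into the natural pairs of summands. Brackets inside $\lie g(\Psi)$ remain there since $\lie g(\Psi)$ is a subalgebra; the bracket of a real root vector with an element of $V_x\otimes t^x$ is exactly \eqref{liealgnachweis}, which vanishes when $x\notin k_i\mathbb{Z}$ (using $V_x\subseteq\lie h(\mathring\Psi_i)^\perp$) and otherwise lands back in $\lie g(\Psi)$.

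\emph{The main obstacle.} The only genuinely delicate case, and the one I would treat most carefully, is the bracket of two Cartan type summands, where
$$[v\otimes t^x,\,w\otimes t^y]=[v,w]\otimes t^{x+y}+x\,\delta_{x+y,0}\,(v,w)\,c.$$
The $\mathring{\lie h}$-part drops out because $v$ and $w$ commute, so the issue is entirely the central contribution $x(v,w)c$ occurring when $y=-x$. I would dispose of the generic situation by tracking the definition of $c'$: if the level $x$ is a nonzero multiple of some $k_i$ then $k_i\neq0$, hence $c'=c$ and $c\in\lie g(\Psi)\subseteq\lie s'$, so the term is absorbed; the same applies when one factor comes from the $\lie h(\mathring\Psi_i)$-part of $\lie g(\Psi)$ at a nonzero level. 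The remaining configuration, in which both factors sit at levels of $\Lambda$ while $\lie g(\Psi)$ carries no central element, is where the argument is tightest and must lean on the orthogonality conditions placed on the $V_x$; I expect this central bookkeeping, rather than any of the formal reductions above, to be the crux of the proof.
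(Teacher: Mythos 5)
Your outline coincides with the paper's own (very terse) proof: the forward direction is delegated to the discussion preceding the theorem (the classification \eqref{defPsi}, the description of the spaces $\mathring{\lie h}_x$, and the choice of a complement $V_x$ of $\sum_{i:\,x\in k_i\bz}\lie h(\mathring{\Psi}_i)$ inside $\mathring{\lie h}_x$), and the converse is a case-by-case verification of closure under the bracket, of which \eqref{liealgnachweis} is the only case the paper writes out. You go further than the paper in isolating the genuinely delicate configuration, namely the central contribution $x\,\delta_{x+y,0}(v,w)c$ in the bracket of two Cartan-type summands. But you leave exactly that case open, and the resolution you anticipate (``lean on the orthogonality conditions placed on the $V_x$'') does not exist. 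When every $k_i=0$ and every $f_i=0$ one has $c'=0$, so $c\notin\lie g(\Psi)$; for $x\in\Lambda$ the only constraints are $V_{\pm x}\subseteq\lie h(\mathring{\Psi})^{\perp}$, and since the invariant form is nondegenerate on $\lie h(\mathring{\Psi})^{\perp}$ this does not force $(V_x,V_{-x})=0$. Concretely, take $\mathring{\lie g}$ of type $A_2$, $\mathring{\Psi}=\{\pm\alpha\}$, $k_1=f_1=0$, $\Lambda=\{\pm1\}$, $V_{\pm1}=\bc h$ with $\alpha(h)=0$: all hypotheses of the theorem hold, yet $[h\otimes t,\,h\otimes t^{-1}]=(h,h)c\neq0$ does not lie in $\lie g(\Psi)\oplus\bc(h\otimes t)\oplus\bc(h\otimes t^{-1})$, so the right-hand side of \eqref{symmform4} is not a subalgebra.

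To be fair, this is not a defect of your strategy alone: the paper hides the same case behind ``all other cases are checked similarly,'' and the statement needs a correction in the degenerate situation $c'=0$ with $\Lambda\neq\emptyset$ --- one must either adjoin $\bc c$ or impose $(V_x,V_{-x})=0$ for all $x$. Indeed the paper's own example $\bc\lie g_{\pm\alpha}\oplus\bc\alpha^{\vee}\oplus\bc(h\otimes t^{\pm1})\oplus\bc c$ (the remark following Proposition~\ref{keyprop}) is a symmetric regular subalgebra of $[\lie g,\lie g]$ that is not literally of the form \eqref{symmform4}, since every summand in \eqref{symmform4} lies in $\mathring{\lie g}\otimes\bc[t^{\pm}]$ when $c'=0$; this shows the forward direction is affected as well. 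So you have correctly located the crux of the argument, but neither you nor the paper closes it, and as written that step fails.
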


\begin{rem}\label{symmremark} 
\begin{enumerate}[leftmargin=*]
\item A symmetric regular subalgebra is contained in the derived algebra $[\lie g,\lie g]$ or is of the form $\lie s'\oplus \mathbb{C}d$ for some symmetric regular subalgebra $\lie s'\subseteq [\lie g,\lie g]$.
    \item If $k=k_1=\cdots =k_s$, then we have 
    $$\mathring{\lie h}=\displaystyle\bigcap_{\substack{j=1\\ x\notin k_j \mathbb{Z}}}^s \lie h(\mathring{\Psi}_j)^\perp,\ \forall x\in k\mathbb{Z}$$
   and $V_x$ for $x\in k\mathbb{Z}$ can be chosen to be any subspace of $\mathring{\lie h}$ satisfying $V_{x}\cap \lie h(\mathring{\Psi})=0$. 
   \item If $\Delta(\lie s)\cap \mathring{\Delta}=\mathring{\Delta}$, then $s=1$, $V_x=0$ for all $x$ and $\lie s=\lie g(\Delta(\lie s)^{\mathrm{re}})$. This follows immediately from Theorem~\ref{mainthmaffine1}.
    
\end{enumerate}    
\end{rem}
    We denote by $\lie s(\mathring{\Psi}, \underline{\mathbf k},\Lambda,(f_i), (V_x))$ the 
symmetric regular subalgebra in \eqref{symmform4} associated to the tuple $(\mathring{\Psi}, \underline{\mathbf k},\Lambda,(f_i), (V_x))$. 
\subsection{} We call a proper symmetric regular subalgebra \textit{maximal} if it is not properly contained in any other proper symmetric regular subalgebra. From Theorem ~\ref{mainthmaffine1} we can easily determine the subclass of maximal symmetric regular subalgebras of $\lie g$. It is not surprising that these are connected to maximal real closed subroot systems $\Psi$ which are of the following form (see \cite[Section 2]{roy2019maximal}).  
\begin{enumerate}
    \item[Case 1.]
   There exists a prime number $k$ and a $\mathbb{Z}$-linear function $f:\mathring{\Delta}\to \mathbb{Z}$ such that $$\Psi=(\mathring{\Delta}, k, f):=\{\alpha + (f(\alpha)+kr)\delta : \alpha\in \mathring{\Delta},\ r\in \mathbb{Z}\}.$$
    \item[Case 2.] There exists a proper maximal closed subroot system $\mathring{\Psi}\subsetneq\mathring{\Delta}$ such that $$\Psi=(\mathring{\Psi}, 1, \mathbf{0}):=\{\alpha + r\delta : \alpha\in \mathring{\Psi}, r\in \mathbb{Z}\}.$$
\end{enumerate}
Now we are ready to state the classification of maximal symmetric regular subalgebras. 
\begin{thm}\label{maintheoremaffinemaximalones}
Let $\lie g$ be an untwisted affine Kac-Moody algebra. Then $\lie s$ is a maximal symmetric regular subalgebra if and only if one of the following conditions hold
\begin{itemize}
    \item $\lie s=[\lie g,\lie g]$,
    \vspace{0,1cm}
    \item there exists a maximal closed subroot system $\mathring{\Psi}\subsetneq \mathring{\Delta}$ with
$$\lie s=\lie h\oplus \bigoplus\limits_{\substack{\alpha\in \mathring{\Psi}\\ r\in \mathbb{Z}}}(\mathbb{C}x_\alpha\otimes t^r)\oplus \bigoplus\limits_{r\in \mathbb Z\backslash\{0\}} (\mathring{\lie h}\otimes t^r)$$
\item there exists a prime number $k$ and a $\mathbb{Z}$-linear function $f:\mathring{\Delta}\rightarrow \mathbb{Z}$ with
$$\lie s= \lie s(\mathring{\Delta}, k, \emptyset, f, (0))\oplus \mathbb{C}d.$$
\end{itemize}
\end{thm}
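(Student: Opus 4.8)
The plan is to reduce everything, via Remark~\ref{symmremark}(1), to classifying the maximal proper symmetric regular subalgebras of the derived algebra $[\lie g,\lie g]$, and then to read off the three forms from the structure theorem (Theorem~\ref{mainthmaffine1}). First I would record two easy facts. Since $\lie g=[\lie g,\lie g]\oplus\mathbb{C}d$, any symmetric regular subalgebra strictly containing $[\lie g,\lie g]$ must contain $d$ and hence equal $\lie g$; thus $[\lie g,\lie g]$ is maximal, and moreover any maximal symmetric regular subalgebra contained in $[\lie g,\lie g]$ must equal $[\lie g,\lie g]$, since anything smaller is properly contained in the proper subalgebra $[\lie g,\lie g]$. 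By Remark~\ref{symmremark}(1) the only remaining maximal subalgebras are of the form $\lie s'\oplus\mathbb{C}d$ with $\lie s'\subseteq[\lie g,\lie g]$. Because the root spaces of a regular subalgebra attached to nonzero roots already lie in $[\lie g,\lie g]$, any symmetric regular $\lie t\supseteq\lie s'\oplus\mathbb{C}d$ decomposes as $\lie t=(\lie t\cap[\lie g,\lie g])\oplus\mathbb{C}d$; hence $\lie s'\oplus\mathbb{C}d$ is maximal if and only if $\lie s'$ is a maximal proper symmetric regular subalgebra of $[\lie g,\lie g]$.

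So the heart of the argument is the classification of maximal proper symmetric regular $\lie s'\subseteq[\lie g,\lie g]$. Writing $\Psi=\Delta(\lie s')^{\mathrm{re}}$ with gradient data $(\mathring{\Psi},\underline{\mathbf k},(f_i))$ and imaginary data $(\Lambda,(V_x))$ as in Theorem~\ref{mainthmaffine1}, I would first enlarge the imaginary part as much as the given $\Psi$ allows. Because distinct irreducible components $\mathring{\Psi}_i$ are mutually orthogonal, the constraints $V_x\subseteq\bigcap_{i:\,x\notin k_i\mathbb{Z}}\lie h(\mathring{\Psi}_i)^\perp$ and $V_x\cap(\sum_{i:\,x\in k_i\mathbb{Z}}\lie h(\mathring{\Psi}_i))=0$ can be met while making the total imaginary space in degree $x$ equal to $\bigcap_{i:\,x\notin k_i\mathbb{Z}}\lie h(\mathring{\Psi}_i)^\perp$; by the converse half of Theorem~\ref{mainthmaffine1} this enlargement is again symmetric regular and, having the same gradient $\mathring{\Psi}$, is proper whenever $\mathring{\Psi}\subsetneq\mathring{\Delta}$. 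Maximality therefore forces $\lie s'$ to realize this maximal imaginary part.

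Next I would split on the gradient. If $\mathring{\Psi}=\mathring{\Delta}$ (full gradient), then $\mathring{\Psi}$ is irreducible, $\lie h(\mathring{\Delta})=\mathring{\lie h}$ and $\lie h(\mathring{\Delta})^\perp=0$, so the constraints force $V_x=0$ for all $x$ and $\Lambda=\emptyset$; thus $\lie s'=\lie g(\Psi)$ with $\Psi=(\mathring{\Delta},k,f)$. Any symmetric regular overalgebra again has full gradient, hence equals some $\lie g(\Psi')$ with $\Psi\subseteq\Psi'$, so maximality of $\lie s'$ is equivalent to maximality of $\Psi$ among full-gradient real closed subroot systems, i.e. to $k$ being prime; this is Case~1 and yields the third listed form. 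If instead $\mathring{\Psi}\subsetneq\mathring{\Delta}$, then passing to $(\mathring{\Psi},1,\mathbf{0})$ gives a real closed subroot system $\supseteq\Psi$, properly unless already $k_i=1$ for all $i$, with the same proper gradient; its maximal-imaginary completion then properly contains $\lie s'$ unless $k_i=1$ and $f_i=0$ and the imaginary root spaces are the full $\mathring{\lie h}\otimes t^r$. A final inclusion comparison shows $\mathring{\Psi}$ must be a maximal proper closed subroot system of $\mathring{\Delta}$ (Case~2), giving the second listed form.

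Finally I would check the converse, that each of the three displayed subalgebras is maximal, by running these comparisons in reverse: for a symmetric regular overalgebra $\lie t'$ one shows its gradient cannot be all of $\mathring{\Delta}$ (otherwise the full imaginary part of the proper-gradient example would force $\lie t'=[\lie g,\lie g]$), and then that $\lie t'$ coincides with the given subalgebra. I expect the main obstacle to be the asymmetry between the two gradient regimes: in the full-gradient case maximality pushes the imaginary part \emph{down} to zero, whereas in the proper-gradient case it pushes the imaginary part \emph{up} to the full $\mathring{\lie h}$. Keeping the two enlargement directions from interfering --- the ``real'' direction, governed by inclusions of real closed subroot systems, and the ``imaginary'' direction, governed by the subspaces $V_x$ --- is the delicate point, and it is precisely the orthogonality of the components $\mathring{\Psi}_i$ together with the constraints of Theorem~\ref{mainthmaffine1} that make the two directions decouple.
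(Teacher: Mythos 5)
Your proposal is correct and takes essentially the same route as the paper: both invoke Theorem~\ref{mainthmaffine1} together with Remark~\ref{symmremark} to put $\lie s$ in the standard form $\lie s(\mathring{\Psi},\underline{\mathbf k},\Lambda,(f_i),(V_x))\oplus\mathbb{C}d$, split on whether the gradient $\mathring{\Psi}$ is proper or all of $\mathring{\Delta}$, and in the first case enlarge to a maximal closed subroot system with full imaginary part while in the second case force $V_x=0$ and reduce to $k$ being prime via the divisibility of the $k_i$. Your preliminary reduction to maximal subalgebras of $[\lie g,\lie g]$ and the separate "maximize the imaginary part first" step are just a reorganization of the same constructions the paper carries out inside its two cases.
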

\begin{proof}
Let $\lie s$ be a maximal symmetric regular subalgebra. From Theorem~\ref{mainthmaffine1} and Remark~\ref{symmremark} we have $\lie s=\lie s(\mathring{\Psi}, \underline{\mathbf k},\Lambda,(f_i), (V_x))\oplus \mathbb{C} d$ for a suitable choice of data as in Theorem~\ref{mainthmaffine1}.

\textit{Case 1}: Suppose that $\mathring{\Psi}\subsetneq \mathring{\Delta}$. In this case we can choose a maximal closed subroot system $\mathring{\Psi}'\supseteq\mathring{\Psi}$
and set
$$\lie s':=\lie s(\mathring{\Psi}', \underline{\mathbf 1},\emptyset ,(0), (\lie h(\mathring{\Psi})^{\perp}))\oplus \mathbb{C} d=\lie h\oplus \bigoplus\limits_{\substack{\alpha\in \mathring{\Psi}'\\ r\in \mathbb{Z}}} \mathbb{C}(x_{\alpha}\otimes t^{r})\oplus \bigoplus_{r\in \mathbb{Z}\backslash\{0\}} (\mathring{\lie h}\otimes t^{r})\oplus \bc d.$$
Clearly, $\lie s'$ is a symmetric regular subalgebra with $\lie s\subseteq \lie s'\subsetneq \lie g$. By the maximality we must have $\lie s=\lie s'$ and $\mathring{\Psi}=\mathring{\Psi}'$ is a maximal closed subroot system. \medskip 

Conversely, any $\lie s(\mathring{\Psi}, \underline{\mathbf 1},\emptyset ,(0), (\lie h(\mathring{\Psi})^{\perp})\oplus \mathbb{C} d$ with $\mathring{\Psi}\subsetneq \mathring{\Delta}$ being a maximal closed subroot system is a maximal symmetric regular subalgebra. To see this, let \begin{equation}\label{2rrfg}\lie s(\mathring{\Psi}, \underline{\mathbf 1},\emptyset ,(0), (\lie h(\mathring{\Psi})^{\perp})\oplus \mathbb{C} d\subseteq \lie s'':=\lie s(\mathring{\Psi}'', \underline{\mathbf k''},A'' ,(f_i''), (V_x''))\oplus \mathbb{C} d.\end{equation} Hence $\mathring{\Psi}\subseteq \mathring{\Psi}''$ which implies $\mathring{\Psi}''=\mathring{\Delta}$ or $\mathring{\Psi}=\mathring{\Psi}''$. In the first case we must have $s''=1$ and $k_1''=1$ (recall that $\lie s''$ contains already all the imaginary root spaces). So with Remark~\ref{symmremark} and \eqref{rootgenaff} we get $$\lie s''=\lie g(\Delta(\lie s'')^{\mathrm{re}})\oplus \mathbb{C} d=[\lie g,\lie g]\oplus \mathbb{C} d=\lie g.$$
In the latter case $\mathring{\Psi}=\mathring{\Psi}''$ we obviously have equality in \eqref{2rrfg}.

\textit{Case 2}: Suppose that $\mathring{\Psi}=\mathring{\Delta}$. Again by Remark~\ref{symmremark}, we get $\lie s=\lie g(\Delta(\lie s)^{\mathrm{re}})\oplus \mathbb{C} d=
\lie s(\mathring{\Delta}, k_1, \emptyset, f, (0))\oplus \mathbb{C} d$.
If $k_1=1$ then we have $\lie g(\Delta(\lie s)^{\mathrm{re}})=[\lie g, \lie g]$ using \eqref{rootgenaff} which is a contradiction. So we must have $k_1\neq 1$ and we choose a prime divisor $k$ of $k_1$. Then we have $\lie s\subseteq \lie s(\mathring{\Delta}, k, \emptyset, f, (0))\oplus \mathbb{C} d\subsetneq \lie g$ and by the maximality we can conclude $\lie s = \lie s(\mathring{\Delta}, k, \emptyset, f, (0))\oplus \mathbb{C} d$. 

\medskip
Conversely, any $\lie s(\mathring{\Delta}, k, \emptyset, f, (0))\oplus \mathbb{C} d$ (with $k$ prime) must be maximal. To see this, assume that 
$\lie s(\mathring{\Delta}, k, \emptyset, f, (0))\oplus \mathbb{C} d\subseteq {\lie s}''$.  
Since $\Delta(\lie s'')\cap \mathring{\Delta}=\mathring{\Delta}$, we must have 
$\lie s''=\lie s(\mathring{\Delta}, k', \emptyset, f', (0)) \oplus \mathbb{C} d$. Again if $k'=1$, then we get $\lie s''=\lie g$. So assume $k'\neq 1.$ In this case, we
 immediately get
$$\text{
$f(\alpha)+k\mathbb{Z}\subseteq f'(\alpha)+k'\mathbb{Z}$ for all $\alpha\in \mathring{\Delta}$}.$$
This implies that $k=k'$ since $k$ is prime and $f(\alpha)+k\mathbb{Z} = f'(\alpha)+k\mathbb{Z}$ for all $\alpha\in \mathring{\Delta}$. Thus 
 $\lie s(\mathring{\Delta}, k, \emptyset, f, (0))\oplus \mathbb{C} d = {\lie s}''$ and the proof is completed.
\end{proof}

\begin{cor}\label{maincorunt12}
We have a one-to-one correspondence between maximal closed subroot systems and maximal symmetric regular subalgebras different from $[\lie g,\lie g]$.
\qed
\end{cor}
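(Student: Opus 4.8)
The plan is to realize the correspondence explicitly as the map $\lie s\mapsto \Delta(\lie s)^{\mathrm{re}}$, and to prove bijectivity by matching the classification in Theorem~\ref{maintheoremaffinemaximalones} against the list of maximal real closed subroot systems of $\Delta$ recorded in Case~1 and Case~2 above. First I would check that the map is well defined and lands in the correct target. By Proposition~\ref{keyprop}, $\Delta(\lie s)^{\mathrm{re}}$ is always real closed, and by Theorem~\ref{maintheoremaffinemaximalones} a maximal $\lie s\neq[\lie g,\lie g]$ is of exactly one of two forms. For the form $\lie s=\lie h\oplus\bigoplus_{\alpha\in\mathring\Psi,\,r\in\mathbb Z}(\mathbb C x_\alpha\otimes t^r)\oplus\bigoplus_{r\neq 0}(\mathring{\lie h}\otimes t^r)$ with $\mathring\Psi\subsetneq\mathring\Delta$ maximal closed, reading off the real roots gives $\Delta(\lie s)^{\mathrm{re}}=(\mathring\Psi,1,\mathbf 0)$, a Case~2 system; for the form $\lie s=\lie s(\mathring\Delta,k,\emptyset,f,(0))\oplus\mathbb C d$, using the description \eqref{rootgenaff} of the root generated part yields $\Delta(\lie s)^{\mathrm{re}}=(\mathring\Delta,k,f)$, a Case~1 system. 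Hence the image is contained in the set of maximal real closed subroot systems.

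Next I would produce the inverse. To a Case~2 system $(\mathring\Psi,1,\mathbf 0)$ I assign the first type above, and to a Case~1 system $(\mathring\Delta,k,f)$ I assign $\lie s(\mathring\Delta,k,\emptyset,f,(0))\oplus\mathbb C d$; both are genuine maximal symmetric regular subalgebras by Theorem~\ref{maintheoremaffinemaximalones}, and by the computations just made, applying $\lie s\mapsto\Delta(\lie s)^{\mathrm{re}}$ returns the system we started with. This simultaneously establishes surjectivity and one of the two composite identities.

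It then remains to verify injectivity, equivalently that $\lie s$ is recovered from $\Delta(\lie s)^{\mathrm{re}}$. Here I would simply note that within each of the two types the subalgebra is entirely determined by the data already appearing in its real root system: in the first type the summand $\bigoplus_{r\neq 0}(\mathring{\lie h}\otimes t^r)$ and the Cartan $\lie h$ are forced, so $\lie s$ depends only on $\mathring\Psi$; in the second type $\lie s=\lie g(\Psi)\oplus\mathbb C d$ with $\lie g(\Psi)$ the root generated subalgebra attached to $\Psi=(\mathring\Delta,k,f)$, hence again determined by $\Psi$. Since the two types are distinguished by whether the gradient is a proper subsystem of $\mathring\Delta$ or all of $\mathring\Delta$, no collision between them can occur, and the map is injective.

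I expect the only delicate point to be the bookkeeping of matching the two classifications -- reading $\Delta(\lie s)^{\mathrm{re}}$ off the explicit forms in Theorem~\ref{maintheoremaffinemaximalones} and identifying them with the Case~1/Case~2 parametrization -- together with the observation that $[\lie g,\lie g]$ is excluded precisely because its real roots form all of $\Delta^{\mathrm{re}}$ rather than a proper maximal system. There is no substantive combinatorial or algebraic obstacle beyond Theorem~\ref{maintheoremaffinemaximalones} itself; the corollary is essentially a repackaging of that classification once the parametrization of maximal real closed subroot systems is invoked.
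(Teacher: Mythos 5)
Your proposal is correct and matches the paper's intent: the corollary is stated with a \qed because it is an immediate repackaging of Theorem~\ref{maintheoremaffinemaximalones} together with the Case~1/Case~2 parametrization of maximal real closed subroot systems, via exactly the map $\lie s\mapsto\Delta(\lie s)^{\mathrm{re}}$ you describe. Your bookkeeping (reading off the real roots of each of the two explicit forms, recovering $\lie s$ from the gradient being proper or all of $\mathring\Delta$, and excluding $[\lie g,\lie g]$ because its real roots are all of $\Delta^{\mathrm{re}}$) is precisely the verification the authors leave implicit.
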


\section{Remarks on the nonsymmetric case}\label{section6}
If $\lie s$ is not necessarily symmetric, we define 
\begin{equation*}\label{dec224}\Delta(\lie{s})_{\mathrm{sy}}:=\{\alpha\in \Delta(\lie{s}):-\alpha\in \Delta(\lie{s})\},\ \ \Delta(\lie{s})_{\mathrm{sp}}:=\Delta(\lie{s})\backslash \Delta(\lie{s})_{\mathrm{sy}}\end{equation*}
and refer to the above subsets as the \textit{symmetric} and \textit{special} parts respectively. However, the description of the special part is even unknown in the case $|\Delta|<\infty$ and some progress has been made in \cite{DdeG21} giving algorithms how to produce special roots. In this section we recall Dynkin's result, see that it fails for affine Kac-Moody algebras and prove an anlogue under further restrictions.
\subsection{} We set
$$\lie{s}_{\mathrm{sy}}:=\langle \lie{s}_\alpha: \alpha\in \Delta(\lie{s})_{\mathrm{sy}} \rangle, \ \ \  \lie{s}_{\mathrm{sp}}=\bigoplus_{\alpha\in\Delta(\lie s)_{\mathrm{sp}}} \lie s_{\alpha}\oplus  \lie h_{\lie s_\mathrm{sp}}$$ 
where $\lie h_{\lie{s}_\mathrm{sp}}$ is a vector space complement of $\lie h_{\lie{s}_\mathrm{sy}}:=(\lie h\cap \lie s_{\mathrm{sy}})$ in  $\lie h_{\lie s}:=(\lie h\cap \lie s)$. Dynkin studied regular subalgebras of finite dimensional semi-simple Lie algebras in \cite{dynkin1952semisimple} and obtained the following result.
\begin{thm}\label{Dynkintheorem}
Let $\lie{g}$ be a finite-dimensional semi-simple Lie algebra with Cartan subalgebra $\lie{h}$. Given a regular subalgebra $\lie{s}$ of $\lie{g}$, we have that 
$\lie{s}_{\mathrm{sp}}$ is the radical of $\lie{s}$ and $\lie{s}_{\mathrm{sy}}$ is a maximal semi-simple Lie algebra of $\lie{s}$ such that 
$\lie{s}=\lie{s}_{\mathrm{sp}}\rtimes \lie{s}_{\mathrm{sy}}.$
\qed
\end{thm}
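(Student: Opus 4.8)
The plan is to recognize the decomposition $\lie s=\lie s_{\mathrm{sp}}\rtimes\lie s_{\mathrm{sy}}$ as the abstract Levi decomposition of $\lie s$, with $\lie s_{\mathrm{sy}}$ the semisimple Levi factor and $\lie s_{\mathrm{sp}}$ the solvable radical, and to extract everything from the root--space grading. Throughout I use two finite-type facts: every root is real with $\dim\lie g_\alpha=1$, so $\lie s_\alpha=\lie g_\alpha$ for all $\alpha\in\Delta(\lie s)$; and $\Delta(\lie s)$ is \emph{closed}, since for $\alpha,\beta\in\Delta(\lie s)$ with $\alpha+\beta\in\Delta$ the structure constant is nonzero, so $[\lie g_\alpha,\lie g_\beta]=\lie g_{\alpha+\beta}\subseteq\lie s$ and $\alpha+\beta\in\Delta(\lie s)$.

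First I would analyse $\lie s_{\mathrm{sy}}$. A short computation shows that $\Psi:=\Delta(\lie s)_{\mathrm{sy}}$ is a real closed subroot system: it is symmetric by definition, closed by closedness of $\Delta(\lie s)$, and stable under $s_\alpha$ because for $\alpha,\beta\in\Psi$ the whole $\alpha$-string through $\beta$ lies in $\Delta(\lie s)$ (as $\pm\alpha\in\Delta(\lie s)$), so both $s_\alpha(\beta)$ and $-s_\alpha(\beta)=s_\alpha(-\beta)$ lie in $\Delta(\lie s)$. Since $\Psi$ is symmetric, $\lie s_{\mathrm{sy}}=\lie g(\Psi)$ is root generated; by Theorem~\ref{KMtypereg} it is of Kac-Moody type, which in finite type means $\lie s_{\mathrm{sy}}$ is the derived algebra of a semisimple Lie algebra, hence semisimple, while Proposition~\ref{corgen34}(3) gives $\Delta(\lie s_{\mathrm{sy}})=\Psi=\Delta(\lie s)_{\mathrm{sy}}$. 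Thus $\lie s_{\mathrm{sy}}=\lie h_{\lie s_{\mathrm{sy}}}\oplus\bigoplus_{\alpha\in\Delta(\lie s)_{\mathrm{sy}}}\lie g_\alpha$ and the form restricted to $\lie h_{\lie s_{\mathrm{sy}}}$ is nondegenerate.

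Next I would pin down the Cartan part of $\lie s_{\mathrm{sp}}$. Nondegeneracy of the form on $\lie h_{\lie s_{\mathrm{sy}}}$ forces $\lie h_{\lie s}=\lie h_{\lie s_{\mathrm{sy}}}\oplus\lie h_0$, where $\lie h_0:=\{h\in\lie h_{\lie s}:\beta(h)=0 \text{ for all }\beta\in\Delta(\lie s)_{\mathrm{sy}}\}$; this $\lie h_0$ is the complement one must take as $\lie h_{\lie s_{\mathrm{sp}}}$, since the vanishing of the symmetric roots on it is exactly what makes $\lie s_{\mathrm{sp}}$ an ideal. The combinatorial heart is the ideal property $(\Delta(\lie s)_{\mathrm{sp}}+\Delta(\lie s))\cap\Delta\subseteq\Delta(\lie s)_{\mathrm{sp}}$: if $\alpha\in\Delta(\lie s)_{\mathrm{sp}}$, $\beta\in\Delta(\lie s)$ and $\alpha+\beta\in\Delta$, then $\alpha+\beta\in\Delta(\lie s)$ by closedness, and if also $-(\alpha+\beta)\in\Delta(\lie s)$, then closedness applied to $-(\alpha+\beta)$ and $\beta$ yields $-\alpha\in\Delta(\lie s)$, contradicting that $\alpha$ is special. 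Combining this with $\beta(\lie h_0)=0$ for symmetric $\beta$ and $[\lie h_{\lie s},\lie g_\alpha]\subseteq\lie g_\alpha$, a direct check of all bracket types gives $[\lie s,\lie s_{\mathrm{sp}}]\subseteq\lie s_{\mathrm{sp}}$, so $\lie s_{\mathrm{sp}}$ is an ideal.

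It remains to prove $\lie s_{\mathrm{sp}}$ is solvable, which I expect to be the main obstacle. Here I would invoke the classical fact that a closed subset $P\subseteq\Delta$ of a finite root system with $P\cap(-P)=\emptyset$ lies in an open half-space: by the theorem of the alternative, either some $\lambda$ satisfies $(\lambda,\alpha)>0$ for all $\alpha\in P$, or $\sum_{\alpha\in P}c_\alpha\alpha=0$ with $c_\alpha\geq 0$ not all zero; in the latter case positive-definiteness of the form forces a pair $\alpha\neq\beta$ in the support with $(\alpha,\beta)<0$, whence $\alpha+\beta\in\Delta\cap P$ by closedness, and a support-reducing rewrite eventually exhibits an opposite pair or a vanishing single root, a contradiction. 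Applying this to $P=\Delta(\lie s)_{\mathrm{sp}}$ produces $h_\lambda\in\lie h$ acting on each special $\lie g_\alpha$ with strictly positive eigenvalue $(\lambda,\alpha)$, so $N:=\bigoplus_{\alpha\in\Delta(\lie s)_{\mathrm{sp}}}\lie g_\alpha$ is a positively graded finite-dimensional subalgebra, hence nilpotent, and $\lie s_{\mathrm{sp}}=\lie h_0\oplus N$ with $\lie h_0$ abelian and normalizing $N$ is solvable. Finally I assemble: $\lie s=\lie s_{\mathrm{sp}}\oplus\lie s_{\mathrm{sy}}$ as vector spaces, $\lie s_{\mathrm{sp}}$ is a solvable ideal and $\lie s_{\mathrm{sy}}$ a semisimple subalgebra, so $\lie s=\lie s_{\mathrm{sp}}\rtimes\lie s_{\mathrm{sy}}$; since $\lie s/\lie s_{\mathrm{sp}}\cong\lie s_{\mathrm{sy}}$ is semisimple we get $\mathrm{rad}(\lie s)=\lie s_{\mathrm{sp}}$, and $\lie s_{\mathrm{sy}}$, being a complement to the radical, is a maximal semisimple subalgebra by Levi's theorem.
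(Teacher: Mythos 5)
Your argument is correct, and there is nothing in the paper to compare it against line by line: Theorem~\ref{Dynkintheorem} is stated with a \verb|\qed| as Dynkin's classical result and is only cited from \cite{dynkin1952semisimple}, not reproved. The closest internal comparison is Proposition~\ref{dynanalogue}, whose proof of the ideal property uses the Chevalley involution and non-degeneracy of the form on the symmetric root spaces; your version of that step replaces this by the closedness of $\Delta(\lie s)$ (valid in finite type because all roots are real, root spaces are one-dimensional, and the structure constants $N_{\alpha,\beta}$ are nonzero), which is equivalent here and arguably more direct. The two ingredients you supply beyond what the paper's own machinery gives are exactly the ones Dynkin's theorem needs and Proposition~\ref{dynanalogue} does not deliver: semisimplicity of $\lie s_{\mathrm{sy}}=\lie g(\Delta(\lie s)_{\mathrm{sy}})$, which you correctly obtain from Theorem~\ref{KMtypereg} and Proposition~\ref{corgen34}(3) (in finite type the Kac--Moody quotient is semisimple with trivial centre, so $\varphi$ is an isomorphism), and solvability of $\lie s_{\mathrm{sp}}$ via the classical fact that a closed subset $P$ of a finite root system with $P\cap(-P)=\emptyset$ lies in an open half-space (Bourbaki, Lie VI, \S 1); your sketch of that lemma is essentially the standard one, though to make the ``support-reducing rewrite'' terminate you should take the coefficients $c_\alpha$ rational, hence integral, so that $\sum_\alpha c_\alpha$ is a strictly decreasing positive integer. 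One genuine refinement you make, worth keeping explicit: the statement's ``a vector space complement $\lie h_{\lie s_{\mathrm{sp}}}$'' cannot be arbitrary. For $\lie s_{\mathrm{sp}}$ to be an ideal one must take the complement $\lie h_0=\{h\in\lie h_{\lie s}:\beta(h)=0\ \forall\beta\in\Delta(\lie s)_{\mathrm{sy}}\}$ (which is a complement by non-degeneracy of the form on $\lie h_{\lie s_{\mathrm{sy}}}$); for any other choice one gets $[x_\beta,h]=-\beta(h)x_\beta\in\lie g_\beta\subseteq\lie s_{\mathrm{sy}}$ for some symmetric $\beta$ with $\beta(h)\neq 0$, and $\lie s_{\mathrm{sp}}$ fails to be an ideal. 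Your identification of this point, and the final assembly (solvable ideal plus semisimple complement, so $\lie s_{\mathrm{sp}}=\mathrm{rad}(\lie s)$ and $\lie s_{\mathrm{sy}}$ is a Levi factor), are all in order.
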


The analogue for Kac-Moody algebras is false in general as the following example illustrates.
\begin{example} Let $\alpha$ be a root of $\lie{sl_4}(\mathbb{C})$ and $h'$ be a diagonal matrix in $\lie{sl_4}(\mathbb{C})$ such that $\alpha(h')=0$. 

Define the regular subalgebra $\lie s$ of $\lie g=A_3^{(1)}$ as follows:
$$\lie s:=\left(\bigoplus_{r\in 2\bn} \bc(x_\alpha\otimes t^{r})\oplus \bigoplus_{r\in 2\bn} \bc(x_{-\alpha}\otimes t^{r})\oplus \bigoplus_{r\in 2\bn} \bc(\alpha^\vee\otimes t^{r})\right)\bigoplus \bc (h'\otimes t^{-2}).$$
We have 
$$\Delta(\lie s)_{\mathrm{sp}}=\{\alpha+r\delta,-\alpha+r\delta:r\in 2\bn\}\cup\{r\delta:r\in 2\bn\backslash\{2\}\},\ \ \Delta(\lie s)_{\mathrm{sy}}=\{\pm 2\delta\}.$$
However, $\lie s_{\mathrm{sp}}$ is not even a subalgebra of $\lie g$ since $[x_\alpha\otimes t,x_{-\alpha}\otimes t]=\alpha^\vee\otimes t^{2}\notin {\lie s}_{\mathrm{sp}}$. We emphasize here that the restriction of the bilinear form $(\cdot,\cdot): \lie s_{2\delta}\times\lie s_{-2\delta}\rightarrow \mathbb{C}$ is degenerate.

\end{example}
The analogue of Dynkin result reads as follows. 
\begin{prop}\label{dynanalogue}  Let $\lie{s}$ be a regular subalgebra of $\lie g$ such that the restriction of the bilinear form
    $$(\cdot,\cdot): \lie s_{\alpha}\times \lie s_{-\alpha} \rightarrow \mathbb{C},\ \ \alpha\in \Delta(\lie{s})_{\mathrm{sy}}$$
    remains non-degenerate and the Chevalley involution $\omega$ satisfies $\omega(\lie s_{\alpha})\subseteq \lie s_{-\alpha}$ for all $\alpha\in\Delta(\lie s)_{\mathrm{sy}}$.
     We have that $\lie{s}_{\mathrm{sp}}$  is an ideal of $\lie{s}$ and 
 \begin{equation}\label{inflev}\lie{s}_{\mathrm{sy}}=\bigoplus_{\alpha\in\Delta(\lie s)_{\mathrm{sy}}} \lie s_{\alpha} \oplus \lie h_{\lie s_\mathrm{sy}},\  \
 \ \lie{s}=\lie{s}_{\mathrm{sp}}\rtimes \lie{s}_{\mathrm{sy}}.\end{equation}
    \end{prop}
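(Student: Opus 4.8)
The plan is to extract both assertions directly from the root space decomposition $\lie s=\lie h_{\lie s}\oplus\bigoplus_{\alpha\in\Delta(\lie s)}\lie s_\alpha$, using two closure properties of the partition of $\Delta(\lie s)$ into its symmetric part $\Delta(\lie s)_{\mathrm{sy}}$ and its special part $\Delta(\lie s)_{\mathrm{sp}}$. First I would identify $\lie s_{\mathrm{sy}}$. For $\alpha,\beta\in\Delta(\lie s)_{\mathrm{sy}}$ with $[\lie s_\alpha,\lie s_\beta]\neq0$ I apply the Chevalley involution: since $\omega(\lie s_\alpha)\subseteq\lie s_{-\alpha}$, $\omega(\lie s_\beta)\subseteq\lie s_{-\beta}$ and $\omega$ is an injective automorphism, the image $\omega([\lie s_\alpha,\lie s_\beta])\subseteq[\lie s_{-\alpha},\lie s_{-\beta}]$ is again nonzero, so $-(\alpha+\beta)\in\Delta(\lie s)$ and $\alpha+\beta\in\Delta(\lie s)_{\mathrm{sy}}$. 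Thus the symmetric roots are closed under (nonzero) addition, and since the non-degeneracy hypothesis gives $[\lie s_\alpha,\lie s_{-\alpha}]=\bc t_\alpha\neq0$ for symmetric $\alpha$ (with $t_\alpha\in\lie h$ the element representing $\alpha$ via the form), the subalgebra generated by the symmetric root spaces is exactly $\bigoplus_{\alpha\in\Delta(\lie s)_{\mathrm{sy}}}\lie s_\alpha\oplus\sum_\alpha\bc t_\alpha$. Intersecting with $\lie h$ then yields $\lie h_{\lie s_{\mathrm{sy}}}=\sum_{\alpha\in\Delta(\lie s)_{\mathrm{sy}}}\bc t_\alpha$ and the asserted formula for $\lie s_{\mathrm{sy}}$.

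The structural heart of the proof is the complementary claim that the special part is absorbing: if $\beta\in\Delta(\lie s)_{\mathrm{sp}}$, $\alpha\in\Delta(\lie s)$ and $[\lie s_\alpha,\lie s_\beta]\neq0$, then $\alpha+\beta\in\Delta(\lie s)_{\mathrm{sp}}$. Here $\alpha+\beta\neq0$ is automatic, since $\beta$ special means $-\beta\notin\Delta(\lie s)$. I would argue by contradiction using invariance of the form. If $\gamma:=\alpha+\beta$ were symmetric, choose $x\in\lie s_\alpha$, $y\in\lie s_\beta$ with $z:=[x,y]\neq0$ and, by non-degeneracy of $(\cdot,\cdot)$ on $\lie s_\gamma\times\lie s_{-\gamma}$, an element $w\in\lie s_{-\gamma}$ with $(z,w)\neq0$. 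Invariance gives $0\neq(z,w)=([x,y],w)=-(y,[x,w])$, so $[x,w]\neq0$; but $x,w\in\lie s$ force $[x,w]\in\lie s\cap\lie g_{\alpha-\gamma}=\lie s_{-\beta}$, whence $-\beta\in\Delta(\lie s)$, contradicting that $\beta$ is special.

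Given these two facts, assembling the decomposition is bookkeeping. I would fix the complement $\lie h_{\lie s_{\mathrm{sp}}}$ of $\lie h_{\lie s_{\mathrm{sy}}}$ inside $\bigcap_{\gamma\in\Delta(\lie s)_{\mathrm{sy}}}\ker\gamma$, and then check $[\lie s,\lie s_{\mathrm{sp}}]\subseteq\lie s_{\mathrm{sp}}$ case by case: a bracket of any root vector with a special root vector lands in a special root space by the absorbing claim; $[\lie h_{\lie s},\lie s_\beta]\subseteq\lie s_\beta\subseteq\lie s_{\mathrm{sp}}$ for special $\beta$ and $[\lie h_{\lie s},\lie h_{\lie s_{\mathrm{sp}}}]=0$ handle the Cartan directions entering $\lie s_{\mathrm{sp}}$; and for the Cartan directions of $\lie s_{\mathrm{sp}}$ acting on symmetric root vectors one has $[\lie s_\gamma,\lie h_{\lie s_{\mathrm{sp}}}]=-\gamma(\lie h_{\lie s_{\mathrm{sp}}})\,\lie s_\gamma=0$ by the choice of complement, while for special $\gamma$ this stays inside $\lie s_\gamma\subseteq\lie s_{\mathrm{sp}}$. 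This makes $\lie s_{\mathrm{sp}}$ an ideal; $\lie s_{\mathrm{sy}}$ is a subalgebra by definition; and $\lie h_{\lie s}=\lie h_{\lie s_{\mathrm{sy}}}\oplus\lie h_{\lie s_{\mathrm{sp}}}$ together with $\Delta(\lie s)_{\mathrm{sy}}\cap\Delta(\lie s)_{\mathrm{sp}}=\emptyset$ gives $\lie s=\lie s_{\mathrm{sp}}\oplus\lie s_{\mathrm{sy}}$ as vector spaces, i.e. the semidirect product.

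The step I expect to be the main obstacle — and the place where the non-degeneracy hypothesis is truly indispensable — is the choice of the Cartan complement: one must be able to take $\lie h_{\lie s_{\mathrm{sp}}}$ inside the common kernel of the symmetric roots, equivalently to arrange that the radical of the form restricted to $\lie h_{\lie s_{\mathrm{sy}}}$ annihilates all of $\lie h_{\lie s}$. An arbitrary vector space complement need not be killed by the symmetric roots (for instance a degree-derivation direction paired nontrivially with an isotropic $t_{r\delta}$ would break the ideal property), so the identification $\lie h_{\lie s_{\mathrm{sy}}}=\sum_\alpha\bc t_\alpha$ coming from non-degeneracy on the symmetric root spaces is exactly what one leverages to produce a good complement. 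By contrast, the absorbing claim of the second paragraph, which carries the real structural content, becomes short once invariance of the form is invoked.
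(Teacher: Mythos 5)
Your two substantive steps are exactly the paper's: the closure of $\Delta(\lie s)_{\mathrm{sy}}$ under addition of roots with non-vanishing bracket is obtained by applying the Chevalley involution, and your ``absorbing'' property of the special part is the paper's argument verbatim up to interchanging the roles of $\alpha$ and $\beta$ (invariance of the form plus non-degeneracy on $\lie s_\gamma\times\lie s_{-\gamma}$ forces the negative of the special root back into $\Delta(\lie s)$, a contradiction). For the root-space part of the statement your proposal is therefore correct and takes the same route as the paper.

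The gap is in your third paragraph, precisely at the point you flag as the main obstacle but do not close. You are right that for $\lie s_{\mathrm{sp}}$ to be an ideal one needs $\gamma(\lie h_{\lie s_{\mathrm{sp}}})=0$ for every $\gamma\in\Delta(\lie s)_{\mathrm{sy}}$, since $[\lie h_{\lie s_{\mathrm{sp}}},\lie s_\gamma]\subseteq\lie s_\gamma$ and $\lie s_\gamma\cap\lie s_{\mathrm{sp}}=0$ for symmetric $\gamma$. But your claim that non-degeneracy lets one ``produce a good complement'' is not proved, and it can fail: for $\lie g$ untwisted affine take
$$\lie s=\lie g_{\delta}\oplus\lie g_{-\delta}\oplus\bc c\oplus\bc d.$$
This regular subalgebra satisfies both hypotheses of the proposition, because $(x\otimes t,y\otimes t^{-1})=(x,y)$ is non-degenerate on $\mathring{\lie h}\otimes t\times\mathring{\lie h}\otimes t^{-1}$ and $\omega(\mathring{\lie h}\otimes t^{\pm 1})=\mathring{\lie h}\otimes t^{\mp 1}$. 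Here $\lie h_{\lie s_{\mathrm{sy}}}=\bc c$ since $[\mathring{\lie h}\otimes t,\mathring{\lie h}\otimes t^{-1}]=\bc c$, while $\delta(c)=0$ and $\delta(d)\neq 0$; hence every complement of $\bc c$ in $\lie h_{\lie s}=\bc c\oplus\bc d$ is spanned by a vector $h$ with $\delta(h)\neq 0$, and $[\lie h_{\lie s_{\mathrm{sp}}},\lie s_\delta]=\lie s_\delta\not\subseteq\lie s_{\mathrm{sp}}$ for any choice of complement. The obstruction is that the Cartan elements produced by bracketing the symmetric root spaces are isotropic and cannot detect the functional $\delta$, even though $\delta$ is non-zero on $\lie h_{\lie s}$. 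You should know that the paper's own proof is silent on exactly this point --- it only verifies $[\lie s_\alpha,\lie s_\beta]\subseteq\lie s_{\mathrm{sp}}$ for special $\alpha$ and never brackets $\lie h_{\lie s_{\mathrm{sp}}}$ against the symmetric root spaces --- so what your attempt has surfaced is a missing hypothesis (for instance $\lie s\subseteq[\lie g,\lie g]$, or the requirement that all symmetric roots vanish on some complement of $\lie h_{\lie s_{\mathrm{sy}}}$ in $\lie h_{\lie s}$) rather than a defect of your strategy relative to the published one.
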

    \begin{proof} Let $\alpha,\beta\in \Delta(\lie s)_{\mathrm{sy}}$ such that $[\lie{s}_{\alpha},\lie{s}_{\beta}]\neq 0.$ If $\alpha=-\beta$ we have $[\lie{s}_{\alpha},\lie{s}_{\beta}]\in \lie{h}_{\lie s_\mathrm{sy}}$. Otherwise, by applying the Chevalley involution we also get that $-(\alpha+\beta)\in \Delta(\lie s)$ and thus $\alpha+\beta\in \Delta(\lie s)_{\mathrm{sy}}$. This shows the first equation in \eqref{inflev}. Next we show that $\lie{s}_{\mathrm{sp}}$ is an ideal of $\lie{s}.$ Let $\alpha\in \Delta(\lie s)_{\mathrm{sp}}$ and $\beta\in \Delta(\lie s)$ such that $[\lie s_{\alpha},\lie s_{\beta}]\neq 0$; note that in this case $\alpha\neq-\beta$. So we can choose $u_{\alpha}\in \lie s_{\alpha}$ and $u_\beta\in \lie s_{\beta}$ such that $u_{\alpha+\beta}:=[u_{\alpha},u_{\beta}]\in \lie s_{\alpha+\beta}$ is non-zero. If $\alpha+\beta\in \Delta(\lie s)_{\mathrm{sp}}$ we are done. Otherwise $\alpha+\beta\in \Delta(\lie s)_{\mathrm{sy}}$ and by the non-degeneracy of the form we can find a non-zero element $y_{-\alpha-\beta}\in \lie s_{-\alpha-\beta}$ such that $(u_{\alpha+\beta},y_{-\alpha-\beta})\neq 0$. This gives in particular $[\lie s_{-\alpha-\beta},\lie s_{\beta}]\neq 0$ since otherwise 
        $$0\neq (u_{\alpha+\beta},y_{-\alpha-\beta})=([u_{\alpha},u_{\beta}],y_{-\alpha-\beta})=(u_{\alpha},[u_{\beta},y_{-\alpha-\beta})])=0$$
        which is a contradiction. Hence $[\lie{s}_{-\alpha-\beta},\lie{s}_{\beta}]\neq 0$ giving $-\alpha\in \Delta(\lie s)$ which is once more a contradiction. So we must have $\alpha+\beta\in \Delta(\lie s)_{\mathrm{sp}}$.
    \end{proof}
Note that both assumptions made in the above proposition are trivially satisfied for all real roots in $\Delta(\lie s)$; in particular finite root system.
\subsection{}  In this subsection we will discuss some open questions.
\begin{itemize}
\item Let $\Psi$ be a maximal closed subroot system. Is it true that the poset (poset structure given by inclusion)
$$A_{\Psi}:=\{\lie s: \Delta(\lie s)^{\mathrm{re}}=\Psi\}$$ has a unique maximal element? This is true in the untwisted affine case. Note that $A_{\Psi}$ satisfies the following condition:
$$\lie s \subseteq \lie s'\implies \lie s'\in A_{\Psi}$$
for all $\lie s\in A_{\Psi}$ and symmetric regular subalgebras $\lie s'$.
\item Is there a one-to-one correspondence between maximal real closed subroot systems and maximal symmetric regular subalgebras (different from the derived algebra) as in the untwisted affine case?
Our calculations suggest that the maximal candidates satisfy
  $$\lie s=\lie h\oplus \sum_{\alpha\in \Delta(\lie s)} \lie g_{\alpha}.$$
\item Classify real closed subroot systems $\Psi$ such that $\Pi(\Psi)$ is linearly independent (c.f. Remark~\ref{remlininde1}). Computations suggest that for rank 2 hyperbolic Kac-Moody algebras every $\pi$-system $\Sigma\subseteq \Delta^+$ satisfies $|\Sigma|\le 2.$ In particular, $\Sigma$ is linearly independent (c.f. Example~\ref{exrank2}).
\end{itemize}

\bibliographystyle{plain}
\bibliography{bibliography}

\end{document}